\def\R{\mathbb{R}}
\def\C{\mathbb{C}}
\def\RP{\mathbb{R}P}
\def\ve{\mathsf{e}}
\def\ebar{\overline{\ve}}
\def\vv{\mathsf{v}}
\def\vw{\mathsf{w}}
\def\vK{\mathsf{K}}
\def\vV{\mathsf{V}}
\def\vW{\mathsf{W}}
\def\vX{\mathsf{X}}
\def\vY{\mathsf{Y}}
\def\vZ{\mathsf{Z}}
\def\vn{\mathsf{n}}
\def\mF{\EuScript{F}}   
\def\FR{\mF_{\R^3}}
\def\FRm{\mF_{\R^{1,2}}}
\def\FM{\mF_M}
\def\M{\mathcal{M}}
\def\ri{\mathrm{i}}
\def\I{\mathcal{I}}
\def\calJ{\mathcal{J}}
\def\calK{\mathcal{K}}
\def\di{\partial}
\def\w{\omega}
\def\Lie{\mathcal{L}}
\def\chip{\mbox{\raisebox{.35ex}{$\chi$}}}
\def\eps{\varepsilon}
\def\ann{\operatorname{ann}}
\def\inv{\operatorname{inv}}
\def\rk{\operatorname{rank}}
\def\csch{\operatorname{csch}}
\def\sech{\operatorname{sech}}
\def\V{\mathcal V}
\def\pV{V}      
\def\proj{{\pi}}
\def\C #1{\mathcal{#1}}
\def\B #1{\mathbb{#1}}
\def\P #1{\partial_{#1}}
\def\ds{\displaystyle}
\def\bsy{\boldsymbol}
\def\we{\wedge}
\newtheorem{thm}{Theorem}
\newtheorem{prop}[thm]{Proposition}
\newtheorem{lemma}[thm]{Lemma}
\theoremstyle{definition}
\newtheorem{rem}{Remark}
\newtheorem*{rem*}{Remark}
\newtheorem*{rems*}{Remarks}
\newtheorem{xmpl}{Example}
\newtheorem{defn}{Definition}
\def\wh{\widehat}
\def\bsy{\boldsymbol}
\def\we{\wedge}
\def\CB{{\mathcal{B}}}
\def\CV{{\mathcal{V}}}
\def\CH{{\mathcal{H}}}
\def\CN{\mathcal{N}}
\def\CW{{\mathcal{W}}}
\def\CI{{\mathcal{I}}}
\def\B#1{\mathbb{#1}}
\def\a{\alpha}
\def\b{\beta}
\def\e{\varepsilon}
\def\g{\gamma}
\def\O{\Omega}
\def\o{\omega}
\def\O{\Omega}
\def\s{\sigma}
\def\th{\theta}
\def\mg{\bsy{g}}  
\def\brho{\bsy{\rho}}
\def\P #1{\partial_{#1}}
\def\ds{\displaystyle}
\newcommand{\redmark}[1]{\textcolor{red}{#1}}
\begin{document}
\title{Isometric Embedding and Darboux Integrability}
\author{J.N. Clelland,\footnote{Department of Mathematics, University of Colorado, Boulder, CO., USA; {\tt jeanne.clelland@colorado.edu}}
\ \ T.A. Ivey,\footnote{Department of Mathematics, College of Charleston, Charleston, SC., USA; {\tt iveyt@cofc.edu}}
 \ \ N. Tehseen,\footnote{Department of Mathematics and Statistics,
La Trobe University, P.O. Box 199, Bendigo, VIC 3552, Australia; {\tt  naghmanatehseen@gmail.com}}
\ \ P.J. Vassiliou.\footnote{Department of Theoretical Physics, Australian National University, Canberra,  A.C.T., Australia, 2601; {\tt peter.vassiliou@canberra.edu.au}}}
\date{\today}
\maketitle
\begin{abstract}{\small  Given a smooth 2-dimensional Riemannian or pseudo-Riemannian manifold $(M, \bsy{g})$ and an ambient 3-dimensional Riemannian or pseudo-Riemannian manifold $(N, \bsy{h})$, one can ask under what circumstances does the exterior differential system  $\I$ for the isometric embedding $M\hookrightarrow N$ have particularly nice solvability properties.
In this paper we give a classification of all $2$-metrics $\bsy{g}$ whose local isometric embedding system into flat Riemannian or pseudo-Riemannian 3-manifolds $(N, \bsy{h})$ is Darboux integrable. As an illustration of the motivation behind the classification, we examine in detail one  of the classified metrics, $\bsy{g}_0$,  showing how to use its Darboux integrability in order to construct all its embeddings in finite terms of arbitrary functions.  Additionally, the geometric Cauchy problem for the embedding of $\bsy{g}_0$ is shown to be reducible to a system of two first-order ODEs for two unknown functions---or equivalently, to a single second-order scalar ODE.
For a large class of initial data, this reduction permits explicit solvability of the geometric Cauchy problem for $\bsy{g}_0$ up to quadrature. The results described for  $\bsy{g}_0$ also hold for any classified metric whose embedding system is hyperbolic.
\newline\newline
{\bf Keywords}: Exterior differential system,  moving frames, Riemannian 2-metrics
\vskip 0.25mm

\noindent {\bf MSC}: 53A55, 58A17, 58A30, 93C10
}\end{abstract}

\baselineskip=12 pt

\section{Introduction}

It is interesting to wonder under what circumstances a given non-trivial geometric problem has particularly ``nice" or special solutions. For instance there is now a beautiful theory of {\it integrable} geodesic flows that has developed particularly over the last 30-40 years; {\it see} \cite{Kiyohara}. In this paper we address this type of question in relation to isometric embedding of one Riemannian or pseudo-Riemannian manifold into another. Specifically, we study the {\it integrability} of the exterior differential system $\C I$ for the isometric embedding problem of one 2-dimensional Riemannian or pseudo-Riemannian manifold into either Euclidean space $\B R^3$
or Minkowski space $\B R^{1,2}$. The exterior differential system $\C I$ in question is discussed in detail in \cite{BC3G} (see Example 3.8 in Chapter 3).

Despite the fact that isometric embedding  is a venerable subject in differential geometry, there remain a great many basic questions and these have been well documented; see for instance  \cite{Han}. In relation to the {\it integrability} of the exterior differential system for isometric embedding, there have been comparatively few studies. Notable exceptions include work of Melko and Sterling \cite{MelkoSterling}, \cite{MelkoSterling1}, Ferus and Pedit \cite{FerusPedit96} and Terng \cite{Terng97}. For instance,  in \cite{FerusPedit96} the authors  show that the differential system for the isometric embedding of space forms into space forms is completely integrable in the sense of soliton theory.

Our own approach to the integrability of the isometric embedding problem in this paper  is to use a classical notion of integrability pioneered by Darboux (see \cite{CfB2}, Chapter 7) and studied by E. Vessiot in \cite{Vessiot}. In this paper we give a classification of all $2$-metrics whose local isometric embedding system into  $\B R^3$ or $\B R^{1,2}$ is Darboux integrable.
The motivation behind the classification is to use the Darboux integrability of the embedding system in order to construct all the embeddings in explicit finite terms of arbitrary functions. We will give a detailed example in
\S \ref{Example-sec}.

Our approach to Darboux integrability is inspired by the work of Vessiot. His approach was recently generalized to arbitrary, smooth, decomposable exterior differential systems in \cite{AFV} (see \S2 for definitions).  In particular, this approach features a Lie transformation group--- the {\it Vessiot group} of $\C I$---acting as special symmetries of the characteristic distributions of $\C I$ which, in addition, preserve the foliation induced by the first integrals of the characteristics. Among other things, the isomorphism class of the Vessiot group is an invariant of the  exterior differential system $\C I$ up to contact transformations and the general theory applies to systems whose integral submanifolds have arbitrary dimension greater than 1. Of both theoretical and practical importance  is that the Vessiot group defines a {\it superposition formula} which permits one to compute all the integral manifolds of $\C I$ from the superposition of the integral manifolds of its singular systems.

In the case when the exterior differential system $\C I$ is
hyperbolic (a special case of decomposable) with 2-dimensional integral submanifolds, the integral submanifolds of the singular systems are 1-dimensional and can often be represented in terms of quadrature. In many cases of interest these systems project to a quotient manifold which is locally equivalent to some (partial) prolongation of the jet space $J^1(\B R, \B R^q)$, for some $q\geq 1$. This equivalence can be used to express their solutions in terms of arbitrary functions and a finite number of their derivatives with all quadrature eliminated. The superposition formula then provides the explicit solution  of $\C I$ in finite terms of arbitrary functions and their derivatives. See \cite{ClellandVassiliou} for a detailed illustration of this program in the context of harmonic maps and \cite{AFV} for examples in which the dimension of integral manifolds is greater than two. For a textbook account of this material we refer to Chapter 10 in \cite{CfB2}.

It is therefore significant when a geometric problem exhibits Darboux integrability. 
For instance, it is well known that a minimal surface without umbilic points in Euclidean space $\B E^3$ admits isothermal coordinates $\xi,\eta$ such that the function $v(\xi,\eta)$ featured in its induced metric $e^{2v}(d{\xi}^2+d{\eta}^2)$ satisfies the elliptic Liouville equation
\begin{equation}\label{E_liouville}
v_{\xi\xi}+v_{\eta\eta}=e^{-2v}
\end{equation}
(see, e.g., \cite{Brito}).  Even though this equation
is elliptic, it turns out that one can still use the complex singular systems to view it as a Darboux integrable system.
Following the procedure in \cite{AFV} one can obtain the well known general solution
$$
v=-\ln\left(\frac{2|{f}'(z)|}{1+|{f}(z)|^2}\right)
$$
of (\ref{E_liouville}) depending upon an arbitrary holomorphic function ${f}$ of the complex variable $z=\xi+ i\eta$. See \cite{Brito} for an exposition of how this solution $v$ can be used in the construction of minimal surfaces in
$\B R^3$.

Another example is provided by surfaces of mean curvature 1 in hyperbolic space. Here too the differential system for such surfaces is Darboux integrable and there is a Weierstrass representation  (\cite{BryantSurfaces}, Theorem A) for so-called {\it Bryant surfaces}.  Interestingly, as pointed out in \cite{BryantSurfaces}, there is no such representation for constant mean curvature surfaces (CMC) in positive curvature space forms. However,  CMC surfaces in the 3-sphere can be constructed using loop groups \cite{DPW}. Numerous other applications of Darboux integrability in geometric problems can be listed.

The outline of this paper is as follows. In \S 2, we give a brief introduction to the theory of Darboux integrability, the Vessiot group, and the superposition formula.
In \S \ref{ImmersionsToE3}, we introduce the isometric embedding system for Riemannian 2-metrics into Euclidean $\B R^3$ and classify those metrics for which this system is Darboux integrable. It turns out that all such metrics admit a Killing field, and we exploit this symmetry to construct explicit normal forms.
In \S \ref{LorentzianSurfaces}, we perform a similar analysis for embedding Lorentzian 2-metrics into Minkowski space $\B R^{1,2}$.
Finally, in \S \ref{Example-sec}, we study one of the metrics from our classification in detail and use the tools from \cite{AFV} to construct all its embeddings in finite terms of arbitrary functions. We also show that the geometric Cauchy problem for this metric reduces to a system of two first-order ordinary differential equations for two unknown functions---or equivalently, to a single second-order scalar ODE---and we identify a large class of initial data for which the problem reduces to quadrature. We then give a simple explicit example to illustrate this construction.

\section{Darboux Integrability}\label{Darboux-int-sec}

As the name implies, the notion of Darboux integrability originated in the 19th century with Darboux, and it was most significantly developed by Goursat \cite{GoursatBook}. Classically, it was a method for constructing the ``general solution" of a second order PDE in 1 dependent and 2 independent variables
\[
F(x,y,u,u_x,u_y, u_{xx}, u_{xy}, u_{yy})=0
\]
that generalised the so-called ``method of Monge." It relies on the notions of {\it characteristics} and {\it first integrals}. We refer the reader to \cite{GoursatBook},  \cite{CfB2}, \cite{Vassiliou2000}, \cite{Vassiliou2001} for further information on classical Darboux integrability.

In this paper, we use a new geometric formulation of Darboux integrable exterior differential systems \cite{AFV}. At the heart of this theory are the fundamental notions of a {\it Vessiot group} and the {\it superposition formula}, which are our main tools for the study of the isometric embedding system.

For simplicity of exposition, we shall describe the new geometric formulation in the context of a commonly studied special case, namely,  semilinear systems of partial differential equations (PDE) in two independent variables
\begin{equation}\label{semilinear}
\bsy{u}_{xy}=\bsy{f}(x,y,\bsy{u}, \bsy{u}_x, \bsy{u}_y),
\end{equation}
where $\bsy{u}$ and $\bsy{f}$ are $\B R^n$-valued. Each solution possesses a double foliation by curves called {\it  characteristics}.
Such PDE often model wave-like phenomena,
and projection of these curves into the independent variable space describes the space-time history of the wave propagation.
The characteristics of $\bsy{u}_{xy}=\bsy{f}$ are the integral curves of a pair of rank $n+1$ distributions
\begin{equation}\label{characheristics}
H_1=\left\{D_x+D_y\bsy{f}\cdot\P {\bsy{u}_{yy}},\ \ \P {\bsy{u}_{xx}}\right\},\ \ \ H_2=\left\{D_y+D_x\bsy{f}\cdot\P {\bsy{u}_{xx}},\ \P {\bsy{u}_{yy}}\right\},
\end{equation}
on the PDE submanifold $\bsy{R}\subset J^2(\B R^2,\B R^n)$ defined by $\bsy{u}_{xy}=\bsy{f}$ where
\[
D_x=\P x+\bsy{u}_x\cdot\P {\bsy{u}}+\bsy{u}_{xx}\cdot\P {\bsy{u}_x}+\bsy{f}\cdot\P {\bsy{u}_y},\ \ \ D_y=\P y+\bsy{u}_y\cdot\P {\bsy{u}}+\bsy{f}\cdot\P {\bsy{u}_x}+\bsy{u}_{yy}\cdot\P {\bsy{u}_y}
\]
are the {\it total differential operators} along solutions of the PDE. The notation here means, for instance,
$$
\bsy{u}_{xx}\cdot\P {\bsy{u}_x}=\sum_{i=1}^nu^i_{xx}\P {u^i_x}.
$$
Note that if $\CV$ is the pullback to $\bsy{R}$ of the contact system on $J^2(\B R^2,\B R^n)$ then
$H_1\oplus H_2=\ann V$.
(When $V$ is a sub-bundle of the cotangent bundle $T^*M$ of manifold $M$, we will often abuse notation
and refer to $V$ as a Pfaffian system.  When a distinction is necessary, we will let
the corresponding letter $\CV$ denote the exterior differential system generated
differentially by sections of $V$.)

\begin{defn}
If $D$ is a distribution on a manifold $M$, then a function $h:M\to\mathbb{R}$ is said to be a {\it first integral} of $D$ if $Xh=0$ for all $X\in D$.
Equivalently, for $V = \ann D \subset T^*M$, we say $h$ is a first integral of $V$ if $dh$ is a section of $V$.
\end{defn}

For later use, we will briefly review the key results we require from the theory of Darboux integrable exterior differential systems, \cite{AFV}.

\begin{defn}\label{defnDecomposable}
An exterior differential system $\CI$ on $M$ is said to be {\it decomposable of type} $[p,q]$, for $p,q\geq 2$, if about each point $x\in M$, there is a coframe
\begin{equation}\label{decomposableCoframe}
\th^1,\ldots,\th^r, \hat{\s}^1,\ldots,\hat{\s}^p, \check{\s}^1,\ldots,\check{\s}^q
\end{equation}
such that $\CI$ is algebraically generated by the 1-forms and 2-forms
\begin{equation}\label{decomposableEDS}
\CI=\{\th^1,\ldots,\th^r,\hat{\O}^1,\ldots,\hat{\O}^s,\check{\O}^1,\ldots,\check{\O}^t\},
\end{equation}
where $s,t\geq 1$, $\hat{\O}^b\in \O^2( \hat{\s}^1,\ldots,\hat{\s}^p)$ and
$\check{\O}^\b\in\O^2( \check{\s}^1,\ldots,\check{\s}^q)$.
The differential systems algebraically generated by
\begin{equation}\label{singularSystems}
\hat{\CV}=\{\th^i,\hat{\s}^a,\check{\O}^\b\},\ \ \check{\CV}=\{\th^i,\check{\s}^\a,\hat{\O}^b\},
\end{equation}
are said to be the associated {\it singular differential systems} for $\CI$ with respect to the decomposition
\eqref{decomposableEDS}.  The distributions annihilated by $\{\theta^i, \hat{\s}^a\}$ and
$\{\theta^i, \check{\s}^\alpha\}$ are referred to as the associated {\em characteristic distributions}.
\end{defn}

For a sub-bundle $V\subset T^*M$, let us denote by $V^\infty$ the final element of its derived flag.

\begin{defn}\label{defnDarbouxPair}
Let $\hat{V}, \check{V}$ be a pair of Pfaffian systems on a manifold $M$, such that
\begin{enumerate}
\item[a)] $V_1+V_2^{\infty}=T^*M\ \ \text{and}\ \ V_1+V_2^{\infty}=T^*M$;
\item[b)] $V_1^{\infty}\cap V_2^{\infty}=\{0\}$;
\item[c)] $d\o\in\O^2(\hat{V})+\O^2(\check{V})\ \ \ \forall\ \o\in\O^1(\hat{V}\cap\check{V})$
\end{enumerate}
Then $\{\hat{V}, \check{V}\}$ is said to be a {\it Darboux pair}.
\end{defn}

\begin{defn}\label{DarbouxIntegrableEDS}
Let $\CI$ be a decomposable differential system and suppose that the associated singular systems $\hat{\CV}, \check{\CV}$ are Pfaffian. Then $\CI$ is said to be {\it Darboux integrable} if $\{\hat{V}, \check{V}\}$ determine a Darboux pair.
\end{defn}

Condition b) of Definition \ref{defnDarbouxPair} implies that there are no first integrals that are common to $\hat{V}$ and $\check{V}$, while condition a) implies that each of $\hat{V}$ and $\check{V}$ possess {\it sufficiently many} first integrals.    For differential systems with two independent variables such as (\ref{semilinear}) we have
\begin{lemma}\label{DI_for_semilinearSystems}
A semilinear system $\bsy{u}_{xy}=\bsy{f}$ with $\bsy{u, f}\in \mathbb{R}^n$ is Darboux integrable at order $2$ if and only if each of its characteristic distributions $H_i$ has at least $n+1$ independent first integrals.
\end{lemma}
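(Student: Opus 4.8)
The plan is to make Definitions \ref{defnDecomposable}--\ref{DarbouxIntegrableEDS} explicit for the contact system $\CV$ on $\bsy{R}$ and then read off the three conditions of Definition \ref{defnDarbouxPair} as statements about the first integrals of $H_1$ and $H_2$. First I would record the structure equations. Writing $\theta^i,\theta^i_x,\theta^i_y$ for the contact forms generating $\CV$, a direct computation shows, modulo $\CV$,
\[
d\theta^i\equiv 0,\qquad d\theta^i_x\equiv -\check\sigma^i\wedge dx,\qquad d\theta^i_y\equiv -\hat\sigma^i\wedge dy,
\]
where $\hat\sigma^i=du^i_{yy}-(D_yf^i)\,dx$ and $\check\sigma^i=du^i_{xx}-(D_xf^i)\,dy$. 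This exhibits $\CV$ as decomposable of type $[n+1,n+1]$, with the $3n$ common forms $\{\theta^i,\theta^i_x,\theta^i_y\}$ and the blocks $\hat\sigma=\{dy,\hat\sigma^1,\dots,\hat\sigma^n\}$, $\check\sigma=\{dx,\check\sigma^1,\dots,\check\sigma^n\}$; the distributions annihilated by $\{\theta^i,\theta^i_x,\theta^i_y,\hat\sigma\}$ and $\{\theta^i,\theta^i_x,\theta^i_y,\check\sigma\}$ are exactly $H_1$ and $H_2$, so the singular Pfaffian systems are $\hat V=\ann H_1$ and $\check V=\ann H_2$, each of rank $4n+1$. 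Since $d\theta^i_x$ and $d\theta^i_y$ differ from $-\check\sigma^i\wedge dx$ and $-\hat\sigma^i\wedge dy$ only by terms in the ideal generated by $\CV\subseteq\hat V\cap\check V$, each singular system is genuinely Pfaffian and condition (c) of Definition \ref{defnDarbouxPair} holds automatically.

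Next I would set up the dictionary between first integrals and derived flags. Because the final derived system $\hat V^\infty$ is the largest integrable subsystem of $\hat V$, Frobenius shows $\hat V^\infty$ is locally spanned by the differentials of the first integrals of $H_1$, so the number of independent first integrals of $H_1$ equals $\rk\hat V^\infty$; equivalently $\hat V^\infty=\ann\overline H_1$, where $\overline H_1$ is the smallest integrable distribution containing $H_1$, and likewise $\check V^\infty=\ann\overline H_2$. Condition (b), $\hat V^\infty\cap\check V^\infty=\{0\}$, then amounts to $\overline H_1+\overline H_2=T\bsy R$, and this is automatic: with $Z=D_x+D_y\bsy f\cdot\P{\bsy u_{yy}}$ and $W=D_y+D_x\bsy f\cdot\P{\bsy u_{xx}}$ the first generators of $H_1,H_2$, the clean brackets $[\P{u^i_{xx}},Z]=\P{u^i_x}$ and $[\P{u^i_{yy}},W]=\P{u^i_y}$ put $\P{\bsy u_x}\subseteq\overline H_1$ and $\P{\bsy u_y}\subseteq\overline H_2$, while $[\P{u^i_x},Z]\equiv\P{u^i}$ modulo $\langle\P{\bsy u_y},\P{\bsy u_{yy}}\rangle\subseteq\overline H_2$; hence $\overline H_1+\overline H_2$ contains $\P{\bsy u},\P{\bsy u_x},\P{\bsy u_y},\P{\bsy u_{xx}},\P{\bsy u_{yy}}$ and, via $Z,W$, the directions $\P x,\P y$, i.e.\ all of $T\bsy R$. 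Thus (b) and (c) never obstruct Darboux integrability for a semilinear system, and the entire content lies in condition (a).

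Finally I would analyze condition (a), namely $\check V+\hat V^\infty=T^*\bsy R$ and $\hat V+\check V^\infty=T^*\bsy R$. Dualizing, $\check V+\hat V^\infty=\ann(H_2\cap\overline H_1)$ and $\hat V+\check V^\infty=\ann(H_1\cap\overline H_2)$, so (a) is equivalent to $H_2\cap\overline H_1=0$ and $H_1\cap\overline H_2=0$. The forward implication is then immediate: if $\CI$ is Darboux integrable, $H_2\cap\overline H_1=0$ forces $\dim\overline H_1\le\dim T\bsy R-\dim H_2=(5n+2)-(n+1)=4n+1$, so $\rk\hat V^\infty=(5n+2)-\dim\overline H_1\ge n+1$, giving $H_1$ at least $n+1$ first integrals, and symmetrically for $H_2$. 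The reverse implication is the main obstacle. Since every generator and every iterated bracket of $H_1$ is free of $\P y$, one has $H_2\cap\overline H_1\subseteq\langle\P{\bsy u_{yy}}\rangle$; writing $\overline H_1=\langle Z,\P{\bsy u_{xx}},\P{\bsy u_x}\rangle\oplus E$ with $E\subseteq\langle\P{\bsy u},\P{\bsy u_y},\P{\bsy u_{yy}}\rangle$, the bracket $[\P{\bsy u_x},Z]\equiv\P{\bsy u}$ makes $E$ surject onto $\langle\P{\bsy u}\rangle$ with kernel $E'=E\cap\langle\P{\bsy u_y},\P{\bsy u_{yy}}\rangle$, so $\dim\overline H_1=3n+1+\dim E'$; thus the first-integral count reads $\dim E'\le n$ while (a) reads $E'\cap\langle\P{\bsy u_{yy}}\rangle=0$. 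The crux I expect to require the most care is the semilinear-specific fact that a $\P{\bsy u_{yy}}$-component can enter the Lie closure only through a $\P{\bsy u_x}$- or $\P{\bsy u}$-dependence of $\bsy f$ (because $\P{\bsy u_x}f=0$ forces $\P{\bsy u_x}(D_y\bsy f)=0$, and similarly for $\P{\bsy u}$), and each such dependence produces an accompanying, independent $\P{\bsy u_y}$-component in the same bracket; granting this, every nonzero vector of $E'\cap\langle\P{\bsy u_{yy}}\rangle$ is matched by an independent vector of $E'$ with nontrivial $\P{\bsy u_y}$-part, so $\dim E'\le n$ yields $E'\cap\langle\P{\bsy u_{yy}}\rangle=0$ and hence (a). Combining the three steps gives the stated equivalence.
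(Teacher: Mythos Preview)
Your proposal is considerably more careful than the paper's own argument, but the final step has a genuine gap.

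The paper's proof is brief: it writes down the structure equations, identifies the singular Pfaffian systems $\hat V,\check V$ (each of rank $4n+1$), observes that condition (a) of the Darboux-pair definition forces $\rk\hat V^\infty\ge n+1$ and $\rk\check V^\infty\ge n+1$, and then simply asserts the ``if and only if''. It does not explicitly verify conditions (b) and (c), nor does it argue the converse implication (that the rank bound alone suffices for $\hat V+\check V^\infty=T^*\bsy R$). In effect the paper treats the dimension inequality as equivalent to condition (a) without checking that $\hat V\cap\check V^\infty$ is no larger than necessary. Your decision to verify (b) and (c) directly, and to dualize (a) into the transversality statements $H_1\cap\overline H_2=0$ and $H_2\cap\overline H_1=0$, is a genuine improvement in rigor; your bracket computation for (b) is correct and clean.

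The gap is in your converse argument for (a). You reduce the question to showing that $\dim E'\le n$ forces $E'\cap\langle\P{\bsy u_{yy}}\rangle=0$, and then offer a ``matching'' heuristic: every pure $\P{\bsy u_{yy}}$-vector in $E'$ is accompanied by an independent vector with nontrivial $\P{\bsy u_y}$-part. But this does not yield the conclusion. For $n\ge 2$ nothing you have said excludes, say, $E'=\langle\P{u^1_{yy}},\,\P{u^1_y}+c\,\P{u^2_{yy}}\rangle$, which has $\dim E'=2\le n$, satisfies your matching intuition, yet meets $\langle\P{\bsy u_{yy}}\rangle$ nontrivially. The parenthetical ``$\P{\bsy u_x}f=0$ forces $\P{\bsy u_x}(D_y\bsy f)=0$'' is also unclear as stated: it reads as a conditional rather than a fact about arbitrary semilinear $\bsy f$. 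What you actually need is a structural statement about how $\P{\bsy u_{yy}}$-directions enter the iterated brackets of $H_1$---something like ``if a pure $\P{\bsy u_{yy}}$-vector lies in $\overline H_1$, then the integrability of $\overline H_1$ forces $\P{\bsy u_y}$-directions in as well, pushing $\dim\overline H_1$ above $4n+1$''---and that requires a more careful inductive analysis of the bracket structure than you have given. The paper sidesteps this entirely by not attempting it; you have correctly located the real difficulty, but not yet closed it.
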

\begin{proof}
The semilinear system of the Lemma statement defines a submanifold $\bsy{R}$ of dimension $5n+2$ inside the jet space $J^2(\B R^2,\B R^n)$ whose dimension is $6n+2$. The pullback to $\bsy{R}$ of the contact system on $J^2(\B R^2,\B R^n)$ is the sub-bundle $V\subset T^*J^2(\B R^2,\B R^n)$ spanned by $3n$ 1-forms $\th_0^i,\ \th_1^i,\ \th_2^i$, $1\leq i\leq n$, satisfying structure equations
$$
\begin{aligned}
&d\th_0^i\equiv 0,\cr
&d\th_1^i\equiv \pi^i_1\wedge\o^1,\cr
&d\th_2^i\equiv \pi^i_2\wedge\o^2,
\end{aligned}\mod V,
$$
where $\o^1=dx, \o^2=dy$ are the independence forms, $\pi^i_1\equiv du^i_{xx}\mod \{\o^1,\o^2\}$ and $\pi^i_2\equiv du^i_{yy}\mod \{\o^1,\o^2\}$. The associated singular differential systems are Pfaffian with degree 1 components
$$
\hat{V}=\{\th_0^i,\th_1^i,\th_2^1,\pi_1^i,\o^1\}\ \text{and}\ \check{V}=\{\th_0^i,\th_1^i,\th_2^1,\pi_2^i,\o^2\},
$$
each of rank $4n+1$. Condition a) of Definition \ref{defnDarbouxPair} is satisfied if
$\dim\bsy{R}=5n+2\leq \rk \hat{V}+\rk\check{V}^\infty=4n+1+\rk\check{V}^\infty$
and  $\dim\bsy{R}=5n+2\leq \rk \check{V}+\rk\hat{V}^\infty=4n+1+\rk\hat{V}^\infty$. This implies that
$$
\rk\hat{V}^\infty\geq n+1\ \ \ \text{and}\ \ \ \rk\check{V}^\infty\geq n+1.
$$
Thus, the singular systems $\{\hat{V}, \check{V}\}$ of the decomposable EDS $\CI$ whose degree 1 component is $V$ form a Darboux pair (and hence $\CI$ is Darboux integrable) if and only if each has at least $n+1$ first integrals.
\end{proof}
\begin{rem}
The formulation of Darboux integrability culminating in Definition \ref{DarbouxIntegrableEDS} generalizes the well known classical definition to encompass any decomposable exterior differential system. This includes a vast collection of systems of partial differential equations with no general constraint on the number of independent or dependent variables or the order of the system. Since the definition is expressed in terms of EDS it can be applied to differential equations on manifolds. The purpose of Lemma \ref{DI_for_semilinearSystems} is to show how it applies to well known examples that are current in the literature. These ideas will be used in our treatment of the isometric embedding system in later sections of the paper.
\end{rem}

A key theorem, proven in \cite{AFV}, is a result in the inverse problem in the theory of quotients:
\begin{thm}\label{mainAFVthm}
Let $(M, \CI)$ be a Darboux integrable Pfaffian system. Then there are Pfaffian systems $(\wh{M}_1, \wh{\CW}_1)$, $(\wh{M}_2, \wh{\CW}_2)$ which admit a common Lie group $G$ of symmetries such that:
\begin{enumerate}
\item The manifold $M$ can be locally identified as the quotient of $\wh{M}_1\times \wh{M}_2$ by a diagonal action of $G$;
\item We have the identification
$$
\CI=\left(\pi_1^*\wh{\CW}_1+\pi_2^*\wh{\CW}_2\right)\Big/G,
$$
where $\pi_i:\wh{M}_1\times \wh{M}_2\to \wh{M}_i,\ i=1,2$ are the canonical projection maps;
\item The quotient $\bsy{\pi} :\wh{M}_1\times \wh{M}_2\to M$ by the diagonal $G$-action defines a surjective superposition formula for $(M,\CI)$.
\end{enumerate}
\end{thm}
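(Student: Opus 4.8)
The plan is to reconstruct the product-with-symmetry structure directly from the two singular systems, the engine being the general reduction theory for exterior differential systems carrying a transverse symmetry group. First I would record the two completely integrable systems attached to the Darboux pair: because $\hat V^\infty$ and $\check V^\infty$ are the terminal members of their derived flags they are Frobenius, so their first integrals---the \emph{Darboux invariants}---cut out two foliations $\hat{\mathcal F}, \check{\mathcal F}$ of $M$ together with submersions onto the respective invariant spaces. Condition (b) of Definition \ref{defnDarbouxPair}, $\hat V^\infty\cap\check V^\infty=\{0\}$, is equivalent to $\ann(\hat V^\infty)+\ann(\check V^\infty)=TM$, i.e.\ to the mutual transversality of $\hat{\mathcal F}$ and $\check{\mathcal F}$; while condition (a) is what guarantees, as noted after the definition, that each singular system carries \emph{sufficiently many} first integrals, and this in turn pins down the dimension of the symmetry group to be built.

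The crux is to manufacture the Lie group $G$. I would introduce the candidate \emph{Vessiot algebra} as the vector fields on $M$ that are tangent to the common leaves of the two foliations and infinitesimally preserve both singular Pfaffian systems---concretely, the sections of $\ann(\hat V^\infty+\check V^\infty)$ that Lie-normalize $\hat V$ and $\check V$ simultaneously. The main obstacle lives precisely here: one must prove that this is a \emph{finite-dimensional} Lie algebra with constant structure functions, so that it integrates to a genuine action of a group $G$ rather than merely a distribution of symmetries. Condition (c), namely $d\omega\in\Omega^2(\hat V)+\Omega^2(\check V)$ for every $\omega\in\Omega^1(\hat V\cap\check V)$, is the structural hypothesis that forces closure of the bracket and constancy of the structure constants; establishing this, together with the freeness and properness needed for the orbit space to be a smooth manifold, is the technical heart of the argument.

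With $G$ in hand I would construct the two factors $\wh{M}_1,\wh{M}_2$ as the local leaf spaces of the Cauchy characteristic foliations of the singular systems $\check{\CV}$ and $\hat{\CV}$, each inheriting a reduced Pfaffian system $\wh{\CW}_i$; since the Vessiot vector fields normalize these foliations, they descend to give the common $G$-action on each factor. Parts (1) and (2) then follow from the inverse-problem (reconstruction) half of the theory: one verifies that the natural map $\bsy{\pi}:\wh{M}_1\times\wh{M}_2\to M$ is a principal $G$-bundle for the diagonal action and that the pulled-back sum $\pi_1^*\wh{\CW}_1+\pi_2^*\wh{\CW}_2$ is $G$-basic with quotient exactly $\CI$, the count $\dim\wh{M}_1+\dim\wh{M}_2-\dim G=\dim M$ serving as a first consistency check. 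Part (3) is then interpretive: $\bsy{\pi}$ is a surjective submersion carrying pairs of integral manifolds of the factor systems to integral manifolds of $\CI$, which is the asserted superposition formula. Beyond the finite-dimensionality and regularity of the Vessiot action, I expect the remaining delicate point to be showing that the diagonal quotient returns $\CI$ on the nose, rather than some other Darboux integrable system sharing the same invariants---the uniqueness side of the inverse problem.
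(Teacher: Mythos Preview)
The paper does not contain a proof of this theorem. Immediately before the statement the authors write ``A key theorem, proven in \cite{AFV}, is a result in the inverse problem in the theory of quotients,'' and immediately after they add that ``Reference \cite{AFV} is devoted to a proof of this theorem and to the identification and explicit construction of all the entities mentioned there.'' Theorem~\ref{mainAFVthm} is quoted from the external source as background for the constructions in \S\ref{Example-sec}; no argument is supplied in the present paper.

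There is therefore nothing here to compare your sketch against. For what it is worth, your outline is a plausible high-level summary of the strategy one finds in \cite{AFV}: extract the Darboux invariants from $\hat V^\infty$ and $\check V^\infty$, build the Vessiot algebra from vector fields tangent to the joint level sets and preserving the singular systems, and then identify $M$ as a diagonal quotient. You are also right that the substantive work lies in proving finite-dimensionality and constancy of structure functions for the Vessiot algebra, and in showing that the quotient of $\pi_1^*\wh{\CW}_1+\pi_2^*\wh{\CW}_2$ recovers $\CI$ exactly; those steps are carried out in \cite{AFV} via a sequence of coframe adaptations (the ``5-adapted'' frames alluded to in \S\ref{AdaptedFrames}), none of which your proposal actually performs. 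So as a proof it is only a roadmap, but as a roadmap it points in the same direction as the cited reference.
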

\vskip 5 pt
In this context, a {\it superposition formula} for $(M,\CI)$ is a map $\bsy{\pi} :\wh{M}_1\times \wh{M}_2\to M$ such that if $\sigma_i:\mathcal{U}_i\to \wh{M}_i$ are integral submanifolds of $(\wh{M}_i,\wh{\CW}_i)$, then $\bsy{\pi}\circ(\sigma_1,\sigma_2):\mathcal{U}_1\times\mathcal{U}_2\to M$ is an integral submanifold of $\CI$. A superposition formula is {\it surjective} if every solution of $\CI$ can be expressed in this form for a fixed superposition formula $\bsy{\pi}$  as $\sigma_i$ range over the integral manifolds of $\wh{\CW}_i$.
\vskip 5 pt
Reference \cite{AFV} is devoted to a proof of this theorem and to the identification and explicit construction of all the entities mentioned there, including the Lie group of symmetries $G$, known as the {\it Vessiot group} of the Darboux integrable exterior differential system $\CI$.  It is proven that the isomorphism class of the Vessiot group is a diffeomorphism invariant of such systems. For applications of the above theory of Darboux integrability we refer, for instance to \cite{AF1}, \cite{AF2}, \cite{Nie} and \cite{ClellandVassiliou}.

In the present paper we study the EDS for the isometric immersion of Riemannian and pseudo-Riemannian 2-metrics into either Euclidean or Minkowski spaces of dimension 3. As we shall see in that case, each EDS will be Darboux integrable if and only if the characteristic distributions each have at least 2 independent first integrals.

\section{Immersions of Riemannian surfaces into $\mathbb{R}^3$}\label{ImmersionsToE3}
In this section, we classify all Riemannian 2-metrics such that the EDS for isometric embedding into Euclidean space $\B R^3$ is Darboux integrable.

\subsection{The Isometric Embedding System}
Let $\FR$ be the orthonormal frame bundle of Euclidean $\R^3$, which carries canonical 1-forms $\w^i$ and connection 1-forms $\w^i_j$ (with $\w^i_j = -\w^j_i$), where $1\le i,j \le3$.
We let $\brho$ denote the projection from $\FR$ to $\R^3$, and $\ve_i$ the vector-valued components of the frame, and note that
the canonical and connection forms are defined as components of the exterior derivatives of these vector-valued functions:
\begin{equation}\label{d-of-basepoint} d\brho = \ve_i \w^i \end{equation}
and
\begin{equation}\label{d-of-frame} d\ve_i = \ve_j \w^j_i. \end{equation}
Differentiating these equations yields the usual structure equations for $\FR$:
\begin{equation}\label{R3streq}
d\w^i = -\w^i_j \wedge \w^j, \quad d\w^i_j = -\w^i_k \wedge \w^k_j, \qquad 1 \le i,j,k \le 3.
\end{equation}

Let $M$ be a connected, oriented surface with Riemannian metric $\mg$, and let $\FM$ be the oriented orthonormal frame bundle of $M$, with
projection $\pi: \FM \to M$.
This bundle carries canonical 1-forms $\eta^1, \eta^2$ and connection form $\eta^1_2$, satisfying structure equations
\begin{equation}\label{FMstreq}
d\eta^1 = -\eta^1_2 \wedge \eta^2, \quad d\eta^2 = -\eta^2_1 \wedge \eta^1, \quad d\eta^1_2 = K \eta^1 \wedge \eta^2,
\end{equation}
where $\eta^2_1 = -\eta^1_2$ and $K$ is the Gauss curvature of the metric.  While the $\eta^i$ are 1-forms on $\FM$, the quadratic differential
$
\left(\eta^1\right)^2+\left(\eta^2\right)^2
$
is well-defined on $M$ and coincides with $\mg$.
The canonical forms are sometimes called `dual' 1-forms
since, given any (local) section $f$ of $\FM$, the 1-forms $f^* \eta^1$ and $f^*\eta^2$
are dual to the component vector fields $\vv_1, \vv_2$ of the framing.

On $\FM \times \FR$, we define a Pfaffian system $\I$ generated by the 1-forms
$$\theta_0 := \w^3, \quad \theta_1 := \w^1 - \eta^1, \quad \theta_2 := \w^2 - \eta^2,\quad \theta_3 := \w^1_2 - \eta^1_2.$$
Here, the canonical and connection 1-forms on $\FM$ and $\FR$ are pulled back to the product of these spaces, but we suppress
the pullback notation; similarly, we extend $\pi$ and $\rho$ to the product space by composing with
maps to each factor.
We will only consider integral surfaces $S$ of system $\I$ that satisfy the {\em independence condition}
$\eta^1 \wedge \eta^2 \ne 0$; this is enough to guarantee that $\pi\vert_S$ is a local diffeomorphism and
hence a covering map from $S$ to an open subset of $M$.  We have the following basic result:

\begin{prop}  Let $S$ be an integral surface of $\I$ such that $\pi\vert_S$ is a diffeomorphism
onto an open subset $U \subset M$, and let
$\sigma:U \to S$ be its inverse; then $\psi = \rho \circ \sigma: U \to \R^3$ is an isometric immersion.
Conversely, if $\psi: U \to \R^3$ is an isometric immersion and $f: U \to \FM$ is an orthonormal framing
defined on $U$, then there is lift $\sigma$ of $f$ into $\FM \times \FR$ whose image is an integral surface $S$
of $\I$, and $\rho\vert_S = \psi \circ \pi\vert_S$.
\end{prop}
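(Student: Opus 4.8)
The plan is to verify both implications by pulling the generators $\theta_0,\dots,\theta_3$ back to $U$ and reading off consequences from the defining relations \eqref{d-of-basepoint}, \eqref{d-of-frame} and the structure equations \eqref{R3streq}, \eqref{FMstreq}. For the forward direction I would begin with the fact that $S$ is an integral surface, so $\sigma^*\theta_a=0$; in particular $\sigma^*\w^3=0$, $\sigma^*\w^1=\sigma^*\eta^1$ and $\sigma^*\w^2=\sigma^*\eta^2$. Applying $\sigma^*$ to \eqref{d-of-basepoint} and using $\rho\circ\sigma=\psi$ gives
\[
d\psi=(\ve_i\circ\sigma)\,\sigma^*\w^i=(\ve_1\circ\sigma)\,\sigma^*\eta^1+(\ve_2\circ\sigma)\,\sigma^*\eta^2,
\]
the $\w^3$-term dropping out. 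Since $\ve_1,\ve_2,\ve_3$ are orthonormal along $\rho$, the Euclidean first fundamental form pulls back to $(\sigma^*\eta^1)^2+(\sigma^*\eta^2)^2=\sigma^*\pi^*\mg=(\pi\circ\sigma)^*\mg=\mg$, using $(\eta^1)^2+(\eta^2)^2=\pi^*\mg$ on $\FM$ and $\pi\circ\sigma=\mathrm{id}_U$. The independence condition $\eta^1\wedge\eta^2\ne0$ makes $d\psi$ of rank two, so $\psi$ is an isometric immersion.

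For the converse I would construct the lift explicitly. Writing $\vv_1,\vv_2$ for the vector fields framed by $f$, set $\ve_1:=d\psi(\vv_1)$, $\ve_2:=d\psi(\vv_2)$ and $\ve_3:=\ve_1\times\ve_2$ the unit normal; these assemble into a map $F:U\to\FR$ with basepoint $\psi$, and I take $\sigma:=(f,F)$, so that $S:=\sigma(U)$ satisfies $\rho|_S=\psi\circ\pi|_S$ and the independence condition holds because $f^*\eta^1\wedge f^*\eta^2\ne0$. Decomposing $d\psi=\ve_i\,F^*\w^i$ against the orthonormal frame and pairing with each $\ve_j$, the normality $\ve_3\perp\mathrm{im}\,d\psi$ yields $F^*\w^3=0$, while $d\psi(\vv_k)=\ve_k$ forces $F^*\w^1=f^*\eta^1$ and $F^*\w^2=f^*\eta^2$; hence $\sigma^*\theta_0=\sigma^*\theta_1=\sigma^*\theta_2=0$.

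The one remaining, and least automatic, identity is $\sigma^*\theta_3=0$, i.e.\ $F^*\w^1_2=f^*\eta^1_2$, and I expect this to be the main obstacle. Here I would invoke uniqueness of the Levi-Civita connection. Pulling back the equations \eqref{R3streq} for $d\w^1,d\w^2$ and using $F^*\w^3=0$ to annihilate the $\w^i_3\wedge\w^3$ terms shows that $F^*\w^1_2$ satisfies $d(F^*\w^1)=-F^*\w^1_2\wedge F^*\w^2$ and $d(F^*\w^2)=F^*\w^1_2\wedge F^*\w^1$; pulling back \eqref{FMstreq} shows $f^*\eta^1_2$ obeys the identical pair of equations relative to the same coframe $(F^*\w^1,F^*\w^2)=(f^*\eta^1,f^*\eta^2)$. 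Since a coframe on a surface admits a unique $1$-form solving these torsion-free structure equations --- writing the difference as $a\,\omega^1+b\,\omega^2$ and wedging with $\omega^1$ and $\omega^2$ forces $a=b=0$ --- the two connection forms coincide. This step is precisely where the hypothesis that $\psi$ is genuinely isometric (encoded in the agreement of the two coframes) is used, and where the vanishing $F^*\w^3=0$ must be exploited to reduce the three-dimensional structure equations to their two-dimensional, torsion-free form.
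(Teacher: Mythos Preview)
Your proposal is correct and follows essentially the same approach as the paper. The only cosmetic difference is in the argument for $\sigma^*\theta_3=0$: the paper differentiates the already-vanishing forms $\sigma^*(\w^m-\eta^m)$ directly to obtain $\sigma^*(\w^1_2-\eta^1_2)\wedge\overline{\eta}^m=0$ and then invokes the independence condition, whereas you phrase the same computation as uniqueness of the connection form for a given coframe---but your parenthetical ``writing the difference as $a\,\omega^1+b\,\omega^2$ and wedging'' is exactly the paper's calculation.
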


\begin{center}
\begin{tikzpicture}
\node (surf) at (0,0) {$U$};  
\node (r3) at (2,0) {$\R^3$};
\node (prod) at (1,1.5) {$\FM \times \FR$};
\node (fm) at (-1,1) {$\FM$};
\draw[->,thick] (surf) -- node[below] {$\psi$} (r3);
\draw[->,thick] (prod.315) -- node[above right] {$\rho$} (r3);
\draw[->,thick,shorten >= -.5ex] (prod.west) -- (fm);
\draw[->,thick] (prod.225) -- node[above left] {$\pi$} (surf);
\draw[->,thick] (fm) -- (surf);
\draw[->,dashed,bend right] (surf) to [out=-30,in=-160] node[below right] {$\sigma$} (prod);
\draw[->,dashed,bend left] (surf) to [out=30,in=150] node[below left] {$f$} (fm);
\end{tikzpicture}
\end{center}

\begin{proof} We begin by establishing the first statement.
The vectors $\ebar_i = \ve_i \circ \sigma$ give an
orthonormal framing along the image of $\psi$.  Taking the dot product of the differential
of our mapping $\psi$ with a frame vector gives
\begin{equation}\label{ebardot}
\ebar_i \cdot d\psi = ( \ve_i \circ \sigma)\cdot d(\rho \circ \sigma)
                            = (\ve_i \cdot d\rho ) \circ \sigma \\
                            = \sigma^* \w^i,
\end{equation}
where the last equality follows from the defining equation \eqref{d-of-basepoint} of the canonical forms.
Because $\sigma^* \w^3=0$, then $\ebar_3 \cdot \psi_*(\vw) = 0$ for any vector $\vw$ tangent to $U$.  Thus,
the tangent space to the image of $\psi$ lies in the span of $\ebar_1, \ebar_2$, and we next calculate the differential
of $\psi$ in terms of these vectors.

The 1-forms $\overline{\eta}^m = \sigma^* \eta^m$ for $m=1,2$ give an orthonormal coframe field on $U$.  Let $\vv_1, \vv_2$ be the dual framing.
Because $\sigma^* \w^m= \overline{\eta}^m$, the computation \eqref{ebardot} implies that $\ebar_m \cdot \psi_*(\vw) = \overline{\eta}^m(\vw)$
for any tangent vector $\vw \in TU$.
Hence $\psi_*(\vv_m) = \ebar_m$, and $\psi$ is an isometric immersion.

To establish the converse, let $\vv_m$ for $m=1,2$ be the members of the orthonormal frame field $f$ on $U$,
let $\ebar_m = \psi_* \vv_m$, and $\ebar_3 = \ebar_1 \times \ebar_2$.   For $p\in U$ let
$$\sigma:p \mapsto (p,\vv_1, \vv_2; \psi(p),\ebar_1, \ebar_2, \ebar_3) \in \FM \times \FR$$
and let $S=\sigma(U)$.
Then \eqref{ebardot} implies that $\sigma^* \w^3 = 0$ and $\sigma^*(\w^m - \eta^m) = 0$ for $m=1,2$.
Finally, since
\begin{equation}\label{abovecomp}
0 = \sigma^* d(\w^1 - \eta^1) = -\sigma^*(\w^1_2 \wedge \eta^2 + \w^1_3 \wedge \w^3 -\eta^1_2 \wedge \eta^2) = -\sigma^*(\w^1_2 - \eta^1_2) \wedge \overline{\eta}^2
\end{equation}
and similarly $0 = \sigma^*d (\w^2 - \eta^2) = \sigma^*(\w^1_2 - \eta^1_2) \wedge \overline{\eta}^1$,
it follows from the independence condition that $\sigma^*(\w^1_2 -\eta^1_2)=0$.  Hence, $S$ is an integral surface of $\I$,
and the equation of maps follows from $\rho\circ \sigma = \psi$.
\end{proof}

%
%

Next we will calculate the algebraic generators and singular systems of $\I$.
The calculation \eqref{abovecomp} implies that $d\theta_1 \equiv 0 $ and $d\theta_2\equiv 0$ modulo the
1-forms of $\I$, so that $\CI$ is generated algebraically by $\th_0, \th_1, \th_2, \th_3$ together with the 2-forms
$$ \Omega_0 := \w^3_1 \wedge \eta^1 + \w^3_2 \wedge \eta^2,\qquad
\Omega_1 := \w^3_1 \wedge \w^3_2 - K \eta^1 \wedge \eta^2
$$
which satisfy $d\theta_0 \equiv -\Omega_0$ and $d\theta_3 \equiv \Omega_1$ modulo the 1-forms.

If there are two linearly independent combinations of these that are decomposable, then the factors of each 2-form, together with the generator 1-forms of $\I$, span one of the singular differential systems of $\I$ (see Definition 2 above).  One computes
$$(a \Omega_1 + b \Omega_0) \wedge (a \Omega_1 + b \Omega_0) = 2(a^2 K + b^2)\, \w^3_1 \wedge \w^3_2 \wedge \eta^1 \wedge \eta^2.$$
Thus, decomposable combinations are given by taking $b/a = \pm \sqrt{-K}$.  In order that
the singular systems have smoothly defined 1-forms, we will restrict to either
the {\em hyperbolic case} $K <0$ or the {\em elliptic case} $K >0$. We will say the EDS $\CI$ is
{\it hyperbolic} or {\it elliptic} according to whether the Gauss curvature $K$ is negative or positive, respectively.
In either case, we introduce a smooth positive function $k$ on $M$ such that $k^2 = |K|$.
For future purposes, we introduce the components of the first and second covariant derivatives of $k$; these are the functions
$k_i$ and $k_{ij}$ on $\FM$ satisfying
\begin{equation}\label{k-covariant-derivatives}
dk = k_i \eta^i, \qquad dk_i = k_j \eta^j_i + k_{ij} \eta^j,
\end{equation}
where we now take $1\le i,j \le 2$.

\begin{defn}
If $(M,\bsy{g})$ is a 2-dimensional Riemannian or pseudo-Riemannian manifold whose isometric embedding system
into $\B R^3$ or $\B R^{1,2}$ is Darboux integrable, then we will say that the metric $\bsy{g}$  itself is {\it Darboux integrable} with respect to their embedding.
\end{defn}
\begin{rem}
We will see that the requirement that $(M,\bsy{g})$ be Darboux integrable with respect to an embedding space is that the singular systems $(\hat{\CV},\check{\CV})$ of the corresponding isometric embedding system each have two independent first integrals.
\end{rem}

\subsection{Integrability conditions for the hyperbolic case}\label{hypersec}
The purpose of this section is to prove the following.

\begin{thm}\label{IcondsRiemHyp}
Let $\bsy{g}$ be a Riemannian metric on $M$, and suppose that the Gauss curvature $K$ of $\bsy{g}$ is negative, so that $\CI$ is hyperbolic. Let
$q=k^{-3/2}=(-K)^{-3/4}$. Then $\CI$ is Darboux integrable if and only if the function $q$ on $M$ satisfies the differential equations
$$
q_{11}=q_{22}=-3q^{-1/3},\qquad q_{12}=q_{21}=0,
$$
where the functions $q_{ij}$ on $\FM$ are defined, similarly to \eqref{k-covariant-derivatives}, by the covariant equations
\begin{subequations}\label{q-covariant-derivatives}
\begin{align}
dq&=q_i\eta^i,\label{dq-first} \\
dq_i &= q_j \eta^j_i + q_{ij}\eta^j. \label{dq-second}
\end{align}
\end{subequations}
\end{thm}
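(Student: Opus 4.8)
The plan is to invoke the criterion recorded at the end of \S\ref{Darboux-int-sec}: in this hyperbolic, single--equation setting $\CI$ is Darboux integrable if and only if each of its two characteristic (singular) Pfaffian systems has at least two independent first integrals, i.e.\ if and only if the terminal derived systems $\hat V^\infty$ and $\check V^\infty$ each have rank $\ge 2$. So the whole proof reduces to counting first integrals. First I would make the characteristic systems explicit: since $K=-k^2$, the decomposable combinations factor as
\[
\Omega_1 - k\,\Omega_0 = (\omega^3_1 + k\eta^2)\wedge(\omega^3_2 - k\eta^1),\qquad
\Omega_1 + k\,\Omega_0 = (\omega^3_1 - k\eta^2)\wedge(\omega^3_2 + k\eta^1),
\]
so the two characteristic systems are $\hat V=\{\theta_0,\theta_1,\theta_2,\theta_3,\ \omega^3_1+k\eta^2,\ \omega^3_2-k\eta^1\}$ and $\check V=\{\theta_0,\theta_1,\theta_2,\theta_3,\ \omega^3_1-k\eta^2,\ \omega^3_2+k\eta^1\}$. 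A discrete symmetry of the setup (an orientation--reversing reflection of one leg of the coframe on each factor) interchanges $\hat V$ and $\check V$, so it suffices to analyze $\hat V$; the resulting condition is automatically the one coming from $\check V$ as well.

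Next I would compute the derived flag of $\hat V$ using the structure equations \eqref{R3streq}, \eqref{FMstreq} and \eqref{k-covariant-derivatives}. Working modulo $\hat V$ one finds $d\theta_1\equiv d\theta_2\equiv 0$, while $d\theta_0$, $d\theta_3$, $d(\omega^3_1+k\eta^2)$ and $d(\omega^3_2-k\eta^1)$ are all multiples of $\eta^1\wedge\eta^2$ (with coefficients built from $k,k_1,k_2$); hence $\hat V^{(1)}$ has rank $5$. A second differentiation shows that, modulo $\hat V^{(1)}$, every derivative of a generator lands in the two--dimensional space $\langle\theta_0\wedge\eta^1,\ \theta_0\wedge\eta^2\rangle$, and that $\theta_1,\theta_2$ already surject onto it; therefore $\hat V^{(2)}$ has rank $3$, \emph{unconditionally}. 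At this stage the relevant torsion is carried by the functions
\[
A = 2k^3 + \tfrac32 k^{-1}k_1^2 - k_{11},\qquad B = \tfrac32 k^{-1}k_1k_2 - k_{12},
\]
together with $C,D$ obtained by the reflection $1\leftrightarrow 2$; these record the images of the modified generators under the second derived map.

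The conditions then emerge at the final stage. Writing $\hat V^{(2)}=\langle\delta_1,\delta_2,\delta_3\rangle$ for an explicit rank--$3$ basis adapted to the previous step, I would compute $d\delta_i$ modulo $\hat V^{(2)}$ and impose that the resulting linear map into $2$--forms have rank at most $1$, which is exactly the requirement that $\hat V^\infty$ have rank $\ge 2$. This forces a definite set of relations expressing the Hessian components $k_{ij}$ in terms of $k$ and $k_i$; after simplification they read
\[
k_{11}=2k^3+\tfrac52 k^{-1}k_1^2,\qquad k_{22}=2k^3+\tfrac52 k^{-1}k_2^2,\qquad k_{12}=k_{21}=\tfrac52 k^{-1}k_1k_2 .
\]
Finally I would pass to $q=k^{-3/2}$. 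Since $q^{-1/3}=k^{1/2}$ and, differentiating \eqref{q-covariant-derivatives}, $q_{ij}=\tfrac{15}{4}k^{-7/2}k_ik_j-\tfrac32 k^{-5/2}k_{ij}$, these three relations become precisely $q_{11}=q_{22}=-3q^{-1/3}$ and $q_{12}=q_{21}=0$. Conversely, when they hold the derived flag stabilizes at rank $2$, producing two independent first integrals for each characteristic system, so $\CI$ is Darboux integrable.

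The main obstacle is the last differentiation. Computing $d\delta_i \bmod \hat V^{(2)}$ requires differentiating $A,B,C,D$, which introduces the third covariant derivatives of $k$; to see that the rank--$\le 1$ condition is genuinely second order one must use the commutation relations coming from $d^2k=0$ and $d^2k_i=0$ (these already force $k_{12}=k_{21}$ and give the analogous symmetries and curvature corrections among the third derivatives). Organizing this bookkeeping so that the third--order terms cancel and the conditions appear in the clean symmetric form above --- and recognizing that the normalizing substitution is exactly $q=k^{-3/2}$ --- is the delicate part of the argument.
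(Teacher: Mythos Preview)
Your overall architecture matches the paper's: set up the singular systems $V_\pm$, run the derived flag, and impose that it stabilize at rank $\ge 2$. The ranks you quote ($5$ then $3$) and the final translation to $q=k^{-3/2}$ are correct.

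There is, however, a genuine gap at the step where you say ``This forces a definite set of relations.'' The rank--drop condition at the third derived stage is a system of $2\times 2$ minors, and it does \emph{not} have a unique solution branch. In the paper's computation, imposing that both $V_+^{(3)}$ and $V_-^{(3)}$ have rank $2$ yields \emph{two} possible sets of second--order relations on $k$: the one you wrote down, namely
\[
k_{11}=2k^3+\tfrac52 k^{-1}k_1^2,\quad k_{22}=2k^3+\tfrac52 k^{-1}k_2^2,\quad k_{12}=\tfrac52 k^{-1}k_1k_2,
\]
and a second branch
\[
k_{11}=2k^3+\tfrac32 k^{-1}k_1^2-k^{-1}k_2^2,\quad k_{22}=2k^3+\tfrac32 k^{-1}k_2^2-k^{-1}k_1^2,\quad k_{12}=\tfrac52 k^{-1}k_1k_2.
\]
The second branch is eliminated only by a further compatibility check: differentiating it and equating mixed partials forces $k_1^2+k_2^2=8k^4$, and differentiating \emph{that} is inconsistent with the branch itself. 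Your proposal skips this entirely. (Note also that the second branch is invariant under the $1\leftrightarrow 2$ reflection, so your discrete--symmetry shortcut cannot by itself rule it out.) Relatedly, your torsion function $A=2k^3+\tfrac32 k^{-1}k_1^2-k_{11}$ already carries the $\tfrac32$ coefficient of the spurious branch rather than the $\tfrac52$ of the correct one, which suggests you are one step short of where the branching actually occurs.

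A smaller point: once you land on the correct branch you still need to verify that $V_\pm^{(3)}$ is genuinely Frobenius (not merely rank $2$), i.e.\ that the flag actually stabilizes there. The paper checks this explicitly by exhibiting the rank--$2$ system and confirming integrability; your final sentence asserts stabilization but does not indicate how you would verify it.
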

\begin{proof}
In this case we compute that
$$\Omega_1 \pm k\Omega_0 = (\w^3_1 \mp k \eta^2) \wedge (\w^3_2 \pm k \eta^1).$$
Thus, the two singular Pfaffian systems are
$$\pV_\pm = \{\theta_0, \theta_1, \theta_2, \theta_3, \w^3_1 \mp k\eta^2 , \w^3_2 \pm k \eta^1 \},$$
and $\pV_+, \pV_-$ are a Darboux pair for $\I$.

To determine conditions on the metric $\bsy{g}$ such that $\CI$ is Darboux integrable, we need to determine when $V_\pm$ each have at least 2 independent first integrals.
%
%
That is, the hyperbolic EDS $\I$ is Darboux-integrable if and only if its singular systems each contain a Frobenius system which has rank at least 2,
and which is transverse to the 1-forms of $\I$.   The derived flags of $V_\pm$ must terminate in these Frobenius systems (if they exist), so we will
compute the derived flag of each singular system; we begin with $V_+$.
Direct computation yields the first derived system
$$\pV_+^{(1)} = \left\{ \theta_1,\, \theta_2,\, \theta_3 - k \theta_0,\, \w^3_1 - k\eta^2 +\dfrac{k_1}{2k}\theta_0,\, \w^3_2 + k \eta^1 + \dfrac{k_2}{2k}\theta_0\right\}$$
and second derived system
\begin{multline*}
\pV_+^{(2)} = \{ \theta_)3-k\theta_0 + \dfrac{k_2}{2k} \theta_1 - \dfrac{k_1}{2k} \theta_2, \\
                \w^3_1 - k\eta^2 + \dfrac{k_1}{2k} \theta_0 + \dfrac{3k_1 k_2 - 2k_{12}k}{8k^3} \theta_1 - \dfrac{3k_1^2 -2k_{11}k + 4k^4}{8k^3} \theta_2, \\
                \w^3_2 + k\eta^1  + \dfrac{k_2}{2k} \theta_0  + \dfrac{3k_2^2 - 2k_{22}k + 4k^4}{8k^3}\theta_1 - \dfrac{3k_1k_2 - 2k_{12}k}{8k^3}\theta_2\}.
\end{multline*}
In order for $\pV_+^{\infty}$ to be of rank at least 2, $\pV_+^{(3)}$ must have rank at least 2, and furthermore it must be integrable.  (It has rank 3 if and only if $\pV_+^{(2)}$ is Frobenius, which never happens when $K \neq 0$, so in fact it must have rank exactly 2.)
%
%
Imposing the constraint that $\pV_+^{(3)}$ be rank 2, together with the corresponding condition
for $\pV_-^{(3)}$, leads to the following constraints on the Gauss curvature of $\bsy{g}$: either
\begin{equation}\label{khypercond}
k_{11} = 2k^3 + \dfrac52 \dfrac{k_1^2}{k}, \quad k_{12} = \dfrac52 \dfrac{k_1k_2}{k}, \text{ and } k_{22} = 2k^3 +
\dfrac52 \dfrac{k_2^2}{k},
\end{equation}
or
\begin{equation}\label{strawcond}
k_{11} = 2k^3 + \dfrac32 \dfrac{k_1^2}{k}-\dfrac{k_2^2}{k}, \quad k_{12} = \dfrac52 \dfrac{k_1k_2}{k}, \text{ and } k_{22} = 2k^3 +
\dfrac32 \dfrac{k_2^2}{k}-\dfrac{k_1^2}{k},
\end{equation}
where the $k_{i}$ and $k_{ij}$ are defined as in \eqref{k-covariant-derivatives}.
Either set of equations completely determines the second derivatives of $k$, and taking further covariant derivatives and equating
mixed partials generates compatibility conditions that must be satisfied if such functions are to exist.
The compatibility conditions derived from the first set \eqref{khypercond} are implied by the equations \eqref{khypercond} themselves, but those derived from the second set hold only if $k_1^2 + k_2^2 = 8k^4$.  When this condition is differentiated in turn, the results are
inconsistent with \eqref{strawcond}.  On the other hand, when we assume that \eqref{khypercond} holds, it is easy
to check that
$$\pV_+^{(3)} = \left\{\w^3_1 -k \eta^2 +\dfrac{k_1}{2k^2}(\omega^1_2 - \eta^1_2),\, \w^3_2 + k\eta^1 +\dfrac{k_2}{2k^2}  (\w^1_2 - \eta^1_2)\right\}$$
and this is a Frobenius system.  Similarly, assuming \eqref{khypercond} gives
$$\pV_-^{(3)} = \left\{\w^3_1 +k \eta^2 -\dfrac{k_1}{2k^2}(\w^1_2 - \eta^1_2),\, \w^3_2 - k\eta^1 -\dfrac{k_2}{2k^2}  (\w^1_2 - \eta^1_2)\right\}$$
which also turns out to be a Frobenius system.  Thus, \eqref{khypercond} is necessary and sufficient for Darboux integrability.

The conditions \eqref{khypercond} are slightly nicer when expressed in terms of $q$.  Indeed, one finds that \eqref{khypercond}
is equivalent to the second covariant derivatives
of $q$ satisfying
$$q_{11} = q_{22} = -3 q^{-1/3},\qquad q_{12}=q_{21}=0,$$
as required.
\end{proof}


\subsection{Integrability conditions for the elliptic case}
Now assume that we have metric of strictly positive Gauss curvature $K$ on $M$, and set $k=\sqrt{K}$. We prove a theorem similar to Theorem \ref{IcondsRiemHyp}.
\begin{thm}\label{IcondsRiemEll}
Let $\bsy{g}$ be a Riemannian metric on $M$, and suppose that the Gauss curvature $K$ of $\bsy{g}$ is positive, so that $\CI$ is elliptic. Let
$q=k^{-3/2}=K^{-3/4}$. Then $\CI$ is Darboux integrable if and only if the function $q$ on $\FM$ satisfies the differential equations
$$
q_{11}=q_{22}=3q^{-1/3},\qquad q_{12}=q_{21}=0,
$$
where the $q_{ij}$ are defined by the covariant equations \eqref{q-covariant-derivatives}.
\end{thm}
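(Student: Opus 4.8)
The plan is to mirror the structure of the proof of Theorem \ref{IcondsRiemHyp}, replacing the real splitting of the decomposable 2-forms by a complex one. Recall that in the elliptic case $K>0$ the identity
$$
(a\O_1+b\O_0)\we(a\O_1+b\O_0)=2(a^2K+b^2)\,\w^3_1\we\w^3_2\we\eta^1\we\eta^2
$$
forces $b/a=\pm\ri\sqrt{K}=\pm\ri k$, so the decomposable combinations are $\O_1\pm\ri k\,\O_0$, and one computes
$$
\O_1\pm\ri k\,\O_0=(\w^3_1\mp\ri k\,\eta^2)\we(\w^3_2\pm\ri k\,\eta^1).
$$
First I would record the two complex-conjugate singular Pfaffian systems
$$
\pV_\pm=\{\theta_0,\theta_1,\theta_2,\theta_3,\ \w^3_1\mp\ri k\,\eta^2,\ \w^3_2\pm\ri k\,\eta^1\},
$$
which form a (complex) Darboux pair for $\CI$. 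As in the hyperbolic case, Darboux integrability is equivalent to each $\pV_\pm$ containing a Frobenius subsystem of rank at least $2$ transverse to the $1$-forms of $\CI$, and this is detected by computing the derived flag of $\pV_+$ (the flag for $\pV_-$ being its complex conjugate).

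Next I would carry out the derived-flag computation for $\pV_+$, which is formally identical to the hyperbolic computation but with the substitution $k\mapsto\ri k$ propagating through the structure equations \eqref{R3streq}, \eqref{FMstreq} and the covariant derivatives \eqref{k-covariant-derivatives}. Concretely, I expect the first and second derived systems to be obtained from the expressions displayed in the proof of Theorem \ref{IcondsRiemHyp} by replacing $k$ with $\ri k$ wherever it appears as the coefficient of $\eta^1,\eta^2$, which flips the sign of every term carrying an even power of $k$ such as the $k^4$ and $k^3$ contributions. Imposing that $\pV_+^{(3)}$ have rank exactly $2$ (rank $3$ would require $\pV_+^{(2)}$ Frobenius, which again fails for $K\neq 0$) then yields the analogue of the constraints \eqref{khypercond}, with the sign of the pure-curvature term reversed:
\begin{equation*}
k_{11}=-2k^3+\dfrac52\dfrac{k_1^2}{k},\quad k_{12}=\dfrac52\dfrac{k_1k_2}{k},\quad k_{22}=-2k^3+\dfrac52\dfrac{k_2^2}{k}.
\end{equation*}
As in the hyperbolic case there is also a spurious second branch (the elliptic analogue of \eqref{strawcond}); I would rule it out by taking one further covariant derivative and equating mixed partials, showing the resulting compatibility condition is inconsistent with the branch equations themselves.

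Finally, assuming the genuine branch holds, I would verify directly that
$$
\pV_+^{(3)}=\left\{\w^3_1-\ri k\,\eta^2+\dfrac{k_1}{2k^2}(\w^1_2-\eta^1_2),\ \w^3_2+\ri k\,\eta^1+\dfrac{k_2}{2k^2}(\w^1_2-\eta^1_2)\right\}
$$
is Frobenius, with $\pV_-^{(3)}$ its conjugate, so that the condition is both necessary and sufficient. The last step is the translation into $q=k^{-3/2}$: using \eqref{q-covariant-derivatives} together with $q_i=-\tfrac32 k^{-5/2}k_i$ and the chain rule for the second covariant derivative, the sign-flipped curvature term $-2k^3$ produces $q_{11}=q_{22}=+3q^{-1/3}$ (versus $-3q^{-1/3}$ in the hyperbolic case), while the off-diagonal condition $k_{12}=\tfrac52 k_1k_2/k$ collapses to $q_{12}=q_{21}=0$, exactly as claimed. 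I expect the main obstacle to be bookkeeping the factors of $\ri$ consistently through the derived-flag computation so that $\pV_\pm$ remain honest complex-conjugate systems and the final Frobenius check goes through; the elimination of the spurious branch, while routine, is the step most likely to hide an algebraic subtlety, since the elliptic sign change could in principle alter which branch survives.
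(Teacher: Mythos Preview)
Your approach is essentially identical to the paper's: complexify the singular systems, compute the derived flag of one of them (its conjugate then follows), obtain the sign-flipped conditions $k_{11}=k_{22}=-2k^3+\tfrac{5}{2}k_i^2/k$, verify the terminal system is rank-2 Frobenius, and translate to $q$. The only slip is in your displayed formula for $\pV_+^{(3)}$: the coefficients of $(\w^1_2-\eta^1_2)$ should carry a factor of $\ri$ (the paper gets $-\dfrac{\ri k_j}{2k^2}$ rather than your $+\dfrac{k_j}{2k^2}$), exactly the bookkeeping hazard you flagged; this does not affect the conditions on $k_{ij}$ or $q_{ij}$.
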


\begin{proof}
We sketch the proof, which is similar to the one for Theorem   \ref{IcondsRiemHyp}. In this case, the 2-forms $\Omega_0$ and $\Omega_1$ can be linearly combined to create decomposable generators, but only if
we use complex coefficients:
$$\Omega_1 \pm \ri k \Omega_0 = (\w^3_1 \mp \ri k \eta^2) \wedge (\w^3_2 \pm \ri k \eta^1).$$
Matters being so, we define a singular system
$$\CW := \left\{ \theta_0, \theta_1, \theta_2, \theta_3, \w^3_1 - \ri k \eta^2, \w^3_2 + \ri k \eta^1\right\},$$
which spans a sub-bundle of the complexified cotangent bundle; the other singular system
is its complex conjugate $\overline{\CW}$.  When $q$ satisfies the above conditions, $\CW$ and $\overline{\CW}$ form a Darboux pair for $\I$,
if we suitably extend the definition to encompass complex-valued 1-forms.

For Darboux integrability, it is sufficient that
$\M$ contain a Frobenius system of rank at least 2.  (This
would automatically imply that $\overline{\CW}$ also contains a Frobenius system of the same rank.)
By a similar calculation to that described in \S $\ref{hypersec}$, $\CW^{(3)}$ has rank at least two only if $k$ satisfies
\begin{equation}\label{kellipcond}
k_{11} = -2k^3 + \dfrac52 \dfrac{k_1^2}{k}, \quad k_{12} = \dfrac52 \dfrac{k_1k_2}{k}, \quad k_{22} = -2k^3 +
\dfrac52 \dfrac{k_2^2}{k}.
\end{equation}
Notice that this differs from \eqref{khypercond} only by a sign change in the expressions for $k_{11}$ and $k_{22}$.
Moreover, when \eqref{kellipcond} holds, $\CW^{(3)}$ and $\overline\CW^{(3)}$ are rank 2 and Frobenius; for example,
$$\CW^{(3)} = \left\{ \w^3_1 - \ri k  \eta^2 - \dfrac{\ri k_1}{2k^2}(\w^1_2 - \eta^1_2),
            \w^3_2 + \ri k \eta^1 - \dfrac{\ri k_2}{2k^2}(\w^1_2 - \eta^1_2)\right\}.$$
If we again set $q=k^{-3/2}$, then \eqref{kellipcond} is equivalent to
$q_{11} = q_{22}=3q^{-1/3}$ and $q_{12}=q_{21}=0$, as required.
\end{proof}

To treat both the elliptic and hyperbolic cases at
the same time, let $\eps=1$ represent the elliptic case ($K>0$) and
$\eps=-1$ the hyperbolic case ($K<0$).
Then the Darboux integrability condition is that the function $q = (\eps K)^{-3/4}$ on $\FM$ satisfy
\begin{equation}\label{qcond}
q_{11}=q_{22} = 3 \eps q^{-1/3}, \qquad q_{12}=q_{21}=0.
\end{equation}

\begin{rem}\label{no-constant}
The equations (\ref{qcond}) clearly admit no constant solutions $q$. Thus, the isometric embedding systems for the constant curvature surfaces $\mathbb{S}^2$ and $\mathbb{H}^2$ have isometric embedding systems which are not Darboux integrable.
\end{rem}

\begin{rem}The equations \eqref{qcond} are invariant under rotations of the orthonormal frame on $M$.  That is, if they hold at one
point in the fiber of $\FM$, then they hold at all points in that fiber.  In the sections that follow, we will often work with a
specific choice of orthonormal frame on $M$, and pull back the structure equations \eqref{FMstreq}, as well as the defining equations
\eqref{q-covariant-derivatives} for the $q_i$ and $q_{ij}$, via the corresponding section $f:M \to \FM$.  When we pull back the EDS $\I$ via this section, we obtain a Pfaffian system on $M \times \FR$, where the $\eta^m$ are replaced by their pullbacks $\overline{\eta}^i = f^*\eta^i$.
However, for ease of notation in what follows we will omit the bars on the $\eta^i$ and $\eta^1_2$.
\end{rem}

\subsection{Normal forms for Darboux-integrable metrics}\label{normal-forms-Riemannian-sec}
In this section we will determine the metrics $\mg$ for which the isometric embedding system is Darboux-integrable.
It follows from Remark \ref{no-constant} that such metrics cannot have constant Gauss curvature, and therefore admit at most
one Killing field.  In fact, determining this set of metrics is made easy by the fact that the integrability conditions
imply that $\mg$ has a Killing field.  Once we choose local coordinates on $M$ that are adapted to this Killing field,
the integrability conditions are reduced to a single second-order ODE, as we will now show.

\begin{thm}\label{existence-of-Killing}
Suppose that $\mg$ is a Riemannian metric on $M$ for which the embedding EDS $\CI$ is Darboux integrable.  Let $J$ be the complex structure on $M$ compatible with $\mg$ and the orientation.  Then $\vV = J \nabla q$ is a Killing field.
Moreover, near any point there exist local coordinates $(s,t)$ on $M$ with respect to which
$$
\bsy{g}=ds^2+q'(s)^2 dt^2, \qquad\text{and}\quad \vV = \dfrac{\partial}{\partial t},
$$
where $q(s)$ is a strictly monotone solution of the ODE
\begin{equation}\label{qode}
q'' = 3\eps q^{-1/3}.
\end{equation}
\end{thm}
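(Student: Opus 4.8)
The plan is to work entirely in moving frames, using the covariant data $q_i, q_{ij}$ from \eqref{q-covariant-derivatives} together with the integrability conditions \eqref{qcond}. With the convention $J\vv_1 = \vv_2$, $J\vv_2 = -\vv_1$, the field $\vV = J\nabla q$ has frame components $(V^1, V^2) = (-q_2, q_1)$. To show $\vV$ is Killing I would compute its covariant differential $DV^i = dV^i + V^j\eta^i_j$ using \eqref{FMstreq} and \eqref{q-covariant-derivatives}; a direct substitution (valid in any frame) gives $DV^1 = -q_{21}\eta^1 - q_{22}\eta^2$ and $DV^2 = q_{11}\eta^1 + q_{12}\eta^2$. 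The Killing equation $V_{i;j}+V_{j;i}=0$ then reduces exactly to $q_{12}=q_{21}=0$ together with $q_{11}=q_{22}$, which is precisely \eqref{qcond}. Thus $\vV$ is Killing as a consequence of Darboux integrability.

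For the normal form I would first note that $\vV$ is tangent to the level sets of $q$, since $dq(\vV) = q_1 V^1 + q_2 V^2 = 0$; hence $q$ is a first integral of the Killing flow. By Remark \ref{no-constant} the function $q$ (equivalently $K$) is nowhere locally constant, so $\nabla q \neq 0$ on a dense open set, and I restrict to this set. There I adapt the orthonormal frame by taking $\vv_1 = \nabla q/|\nabla q|$ and $\vv_2 = J\vv_1 = \vV/|\vV|$; pulling the structure equations back via this section (suppressing bars, as in the Remark preceding \S\ref{normal-forms-Riemannian-sec}) makes $q_2 \equiv 0$ and $q_1 = |\nabla q|$. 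Feeding $q_2\equiv 0$ into \eqref{dq-second} for $i=2$ and using $q_{21}=0$ forces the connection form $\eta^1_2 = -(3\eps q^{-1/3}/q_1)\,\eta^2$, while \eqref{dq-first} gives $dq = q_1\eta^1$ and \eqref{dq-second} for $i=1$ gives $dq_1 = 3\eps q^{-1/3}\eta^1$.

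With this frame the coframe turns out to be exact. Using the above value of $\eta^1_2$ in \eqref{FMstreq} gives $d\eta^1 = 0$, so $\eta^1 = ds$ for a function $s$, and a short computation using $dq_1 = 3\eps q^{-1/3}\eta^1$ shows $d(\eta^2/q_1)=0$, so $\eta^2 = q_1\,dt$ for a function $t$. The metric then becomes $\mg = (\eta^1)^2 + (\eta^2)^2 = ds^2 + q_1^2\,dt^2$, and the dual frame identifies $\vV = q_1\vv_2 = \partial/\partial t$. To see that $q$ solves \eqref{qode} and that $q_1 = q'(s)$, I would show $|\nabla q|^2$ is a function of $q$ alone: differentiating $q_1^2 + q_2^2$ and applying \eqref{qcond} yields $d(|\nabla q|^2) = 6\eps q^{-1/3}\,dq$, hence $|\nabla q|^2 = 9\eps q^{2/3}+C$. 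Since $ds = dq/q_1$ gives $dq/ds = q_1 = |\nabla q|$, we get $q_1 = q'(s)$, so $\mg = ds^2 + q'(s)^2\,dt^2$; differentiating $dq_1 = 3\eps q^{-1/3}\eta^1$ along $\partial_s$ gives $q'' = 3\eps q^{-1/3}$, and $q' = |\nabla q|\neq 0$ makes $q$ strictly monotone.

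The individual computations are each routine, so the main obstacle is conceptual and organizational: choosing the adapted frame that simultaneously sends $q_2$ to zero, and recognizing that the integrability conditions \eqref{qcond} are exactly what force both $d\eta^1 = 0$ and $d(\eta^2/q_1)=0$. These two closedness statements are the crux, since they are what collapse the overdetermined system for $q$ on $M$ into a single ODE in $s$, and they fail without \eqref{qcond}. A secondary point requiring care is the restriction to the open set where $\nabla q \neq 0$, which is where the adapted frame, and hence the coordinates $(s,t)$, are defined.
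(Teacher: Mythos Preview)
Your proof is correct and follows the paper's approach in its first half (the Killing computation, the choice of adapted frame with $q_2=0$, the derivation of $\eta^1_2$, and the closedness $d\eta^1=0$ giving $\eta^1=ds$). Where you diverge is in producing the second coordinate: the paper invokes flowbox coordinates for the Killing field $\vV$, obtaining a coordinate $w$ with $dw(\vV)=1$, and then must argue separately (using that the flow of $\vV$ preserves metric components) that $\mu=dw(\vv_1)$ depends only on $s$, so that $t=w-\int\mu\,ds$ orthogonalizes the coordinates. You instead observe directly that $d(\eta^2/q_1)=0$, which immediately yields $\eta^2=q_1\,dt$ with $(s,t)$ already orthogonal and $\vV=\partial_t$ falling out of the dual-frame identification.

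Your route is the slicker of the two: the closedness of $\eta^2/q_1$ is a one-line consequence of $d\eta^2=\eta^1_2\wedge\eta^1$ together with the already-derived formulas for $\eta^1_2$ and $dq_1$, and it bypasses the flowbox/orthogonalization detour entirely. The paper's approach, on the other hand, makes the role of the Killing symmetry more visible (the second coordinate is \emph{constructed} from the flow), which may be pedagogically useful. Your detour through $d(|\nabla q|^2)=6\eps q^{-1/3}\,dq$ is correct but unnecessary: once $\eta^1=ds$ and $dq=q_1\eta^1$, you have $dq\wedge ds=0$ so $q=q(s)$ and $q_1=q'(s)$ immediately; then $dq_1=3\eps q^{-1/3}\,ds$ is exactly the ODE.
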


\begin{proof} Let $\vv_1, \vv_2$ be an arbitrary oriented orthonormal framing on $M$, and let $\eta^1, \eta^2$ be the dual
coframe field.  Then $\nabla q = q_1 \vv_1 + q_2 \vv_2$ and $\vV = J \nabla q = q_1 \vv_2 - q_2 \vv_1.$
Recall that the condition that a vector field $\vV$ is Killing is that $\Lie_{\vV} \mg = 0$, and this is equivalent to
the condition that the 1-form $\chip = \vV^\flat$ has a skew-symmetric covariant derivative.  In this case, if we let
$\chip = x_1 \eta^1 + x_2 \eta^2$, then $x_1 = -q_2$ and $x_2=q_1$.  Then if we define the components $x_{ij}$ of the covariant derivative $\nabla \chip$
as usual by the equations
$$d x_i = x_j \eta^j_i + x_{ij} \eta^j$$
analogous to \eqref{k-covariant-derivatives} and \eqref{q-covariant-derivatives}, it follows from \eqref{qcond} that
$$x_{12}=-x_{21} =-3\eps q^{-1/3}, \quad x_{11}=x_{22}=0.$$
Thus, $\vV$ is a Killing field.


Near any point, we now choose a special orthonormal framing $\vv_1, \vv_2$ adapted so that $\vV$ is a positive multiple of $\vv_2$, and hence $q_1>0$ and $q_2=0$.  Substituting this and \eqref{qcond} into \eqref{dq-second} gives
\begin{equation}\label{when-q2-zero}
dq_1 = 3\eps q^{-1/3} \eta^1, \qquad 0 = dq_2 = q_1 \eta^1_2 + 3\epsilon q^{-1/3} \eta^2.
\end{equation}
From the second equation it follows that $\eta^1_2 = -3\eps\dfrac{ q^{-1/3}}{q_1} \eta^2$, and therefore
$$d\eta^1 = -\eta^1_2 \wedge \eta^2 = 0.$$
So, we can choose a local coordinate $s$ such that $\eta^1 = ds$.  By appealing to the existence of flowbox coordinates, we can
choose a second coordinate $w$ so that $dw(\vV)=1$.  Since $ds(\vV)=0$, flow by $\vV$ in the $(s,w)$ coordinate system is just translation
in $w$.   Because \eqref{dq-first}
 now gives $dq = q_1 ds$, $q$ must be a function of $s$ only, $q_1 = dq/ds$, and the first equation in \eqref{when-q2-zero} implies that $q(s)$ satisfies the second-order ODE \eqref{qode}.

The dual vector fields $\di/\di s$ and $\di/\di w=\vV$ are not necessarily orthogonal; in fact, if we set
$\mu = dw(\vv_1)$ then $$\di/\di s = \vv_1 - \mu \vV.$$
We wish to replace $w$ with another coordinate $t$ such that $dt(\vV)=1$ and $dt(\vv_1)=0$, so that $(s,t)$ are orthogonal coordinates.
These conditions imply that $dt = dw - \mu \,ds$, but the right-hand side is a closed 1-form only if $\mu$ is a function of $s$.
Fortunately, this follows from the fact that
$$g(\di/\di s, \di/\di w) = -\mu g(\vV,\vV) = - \mu |\nabla q|^3 = -\mu q'(s)^2,$$
and the observation that flow by $\vV$ preserves the components of the metric in the $(s,w)$ coordinates, so that
$g(\di/\di s, \di/\di w)$ is a function of $s$ only.  We then obtain $t$ by integrating $dw - \mu(s) \,ds$.

In our modified coordinate system $(s,t)$ we have $dt(\vV)=1$ and hence $\eta^2 = q'\,dt$.  It follows that
\begin{equation}\label{gform}
\mg = (\eta^1)^2 + (\eta^2)^2 = ds^2 + (q')^2 dt^2.
\end{equation}
\end{proof}

Because $q(s)$ satisfies a second-order ODE, the family of metrics \eqref{gform} depend upon two arbitrary parameters. However, this family of metrics admits a 2-dimensional group action generated by translations in $s$ and a scaling symmetry of the form $s\mapsto \lambda s,\ q\mapsto \lambda^{3/2} q$.  (These arise from the symmetries of the ODE (\ref{qode}) for $q$.)
So, we might expect that, modulo this action, there is just a finite list of distinct metrics.  In what follows, we will take advantage of these transformations to put these metrics into a small number of possible normal forms.

\begin{thm}\label{normalf} Let $\mg$ be a Riemannian 2-metric for which the isometric embedding system is Darboux integrable.  If the system is elliptic, then there are local coordinates in which the metric takes one of the following forms:
\begin{align}
1. \quad \mg &= \cosh^4 u\, (du)^2 + \sinh^2 u\, (dv)^2, \qquad u>0;  \label{norm1}\\
2. \quad \mg &= \sinh^4 u\, (du)^2 + \cosh^2 u\, (dv)^2; \label{norm2}\\
3. \quad \mg &= u^2 \left( (du)^2 + (dv)^2 \right), \qquad u>0. \label{norm3}
\intertext{If the system is hyperbolic, then the metric takes the following form:}
4. \quad \mg &= \cos^4 u\, (du)^2 + \sin^2 u\, (dv)^2, \qquad 0 < u <\pi/2. \label{norm4}
\end{align}
\end{thm}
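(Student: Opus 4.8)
The plan is to begin from the conclusion of Theorem \ref{existence-of-Killing}, which already reduces every Darboux-integrable metric to the family $\mg = ds^2 + q'(s)^2\,dt^2$, where $q(s)$ is a strictly monotone solution of the autonomous ODE $q'' = 3\eps q^{-1/3}$. The first step is to integrate this equation once: multiplying by $q'$ and integrating yields the first integral
$$(q')^2 = 9\eps\, q^{2/3} + E$$
for a constant $E$. This single relation, together with the available coordinate freedoms, will drive the entire classification.

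\textbf{Reduction to the sign of $E$.} Next I would exploit the symmetries noted just before the theorem. Translation in $s$ is pure gauge, since it only relocates the origin of the $s$-coordinate, while the scaling $s\mapsto\lambda s,\ q\mapsto\lambda^{3/2}q$ acts on the first integral by $E\mapsto\lambda E$; in addition, the Killing coordinate $t$ may be rescaled by any positive constant, which multiplies the coefficient $q'^2$ of $dt^2$ by a constant. Consequently the only genuine invariant that survives is the sign of $E$. In the elliptic case $\eps=1$ all three signs can occur, and I claim these give exactly the three forms \eqref{norm1}--\eqref{norm3}. In the hyperbolic case $\eps=-1$ the first integral reads $(q')^2 = E - 9q^{2/3}$, and since $q>0$ and $q'\neq 0$ this forces $E>0$; thus only the single normal form \eqref{norm4} can arise.

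\textbf{Explicit coordinates.} The main computational step is to produce explicit coordinates in each case. After using the scaling to normalize $|E|=9$ (leaving $E=0$ fixed), the first integral takes the form $(q')^2 = 9(\eps q^{2/3}+\delta)$ with $\delta\in\{-1,0,1\}$, and each value suggests a substitution: $q=\cosh^3 u$ for $(\eps,\delta)=(1,-1)$, $q=\sinh^3 u$ for $(1,+1)$, $q=u^3$ for $(1,0)$, and $q=\cos^3 u$ for the hyperbolic case $(-1,+1)$. In every instance a Pythagorean identity collapses $\eps q^{2/3}+\delta$ to a perfect square, so that $q'$ becomes $3\sinh u$, $3\cosh u$, $3u$, or $-3\sin u$ respectively; computing $ds = dq/q'$ then gives $ds = \cosh^2 u\,du$, $\sinh^2 u\,du$, $u\,du$, or $\cos^2 u\,du$, and rescaling the Killing coordinate by $v=3t$ absorbs the leftover factor of $9$ in the $dt^2$ term. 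Substituting back into $\mg = ds^2 + q'^2\,dt^2$ reproduces \eqref{norm1}--\eqref{norm4}, with the stated ranges of $u$ dictated by the requirement that $q'$ not vanish.

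\textbf{Expected obstacle.} I expect the only real difficulty to be bookkeeping rather than anything conceptual: one must verify that the scaling genuinely sends any nonzero $E$ to $\pm 9$, confirm that the hyperbolic positivity constraint excludes $E\le 0$, and check that the chosen branch of $q'$ (hence the domain of $u$) keeps $q$ positive and monotone throughout. Once the substitutions above are in place each of these is a direct identity, and the agreement of the resulting Gauss curvatures $K=\pm q^{-4/3}$ with the defining relation $q=(\eps K)^{-3/4}$ supplies a convenient independent consistency check.
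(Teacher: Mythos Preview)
Your proposal is correct and follows essentially the same route as the paper's own proof: integrate the ODE once to get the first integral, split into cases by the sign of the integration constant (with the hyperbolic case admitting only one sign), and in each case make the natural trigonometric or hyperbolic substitution for $q^{1/3}$ to bring the metric into the stated normal form after rescaling $t$. The only cosmetic difference is that you normalize $|E|=9$ via the scaling symmetry before substituting, whereas the paper carries an auxiliary parameter $a$ through the substitution and sets $a=1$ at the end; the content is identical.
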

\begin{proof}
The starting point is integrating the ODE \eqref{qode} one time.  Multiplying \eqref{qode} by $q'$ and taking antiderivatives gives
\begin{equation}\label{intonce}
(\tfrac13 q')^2 = \epsilon q^{2/3} - C
\end{equation}
for some constant $C$.
There are several cases, depending on the signs of the constants.

\begin{enumerate}
\item[1.] Assume $\epsilon =1$ and $C = a^2 >0$.  (Here and in subsequent cases we take $a>0$.)  Then we have
$$\left( \dfrac{q'}{3a}\right)^2 = \left(\dfrac{q^{1/3}}{a}\right)^2 - 1.$$
This equation is satisfied by $q' = 3a\sinh u$ and $q^{1/3} = a\cosh u$ for $u>0$. Note that the prime
 indicates differentiation by $s$, so that taking the derivative of the second equation implies $ds/du = a^2 \cosh^2 u$.  Thus, in this case the metric may be written as
$$\mg = a^4 \cosh^4 u \,(du)^2 + 9 a^2 \sinh^2 u \,(dt)^2.$$
The metrics in this family differ only by scaling, so we may take $a=1$; then setting $v=3t$ yields the normal form \eqref{norm1}.
For this metric, $K=\sech^4 u$.  As a metric on the half-plane, this degenerates as $u\to 0$.  However,
if we take the quotient by the discrete translation $v \mapsto v +2\pi$ (under which the Killing orbits become circles),
the resulting metric closes up smoothly as $u\to 0$.

\medskip
\item[2.] Assume $\epsilon = 1$ and $C= -a^2 <0$.  Then \eqref{intonce} is satisfied by
$q' =dq/ds= 3a \cosh u$ and $q^{1/3} = a\sinh u$ for $u>0$, so that $ds/du = a^2 \sinh^2 u$.  By means similar to Case 1 we arrive at the
normal form \eqref{norm2}.
For this metric, $K = \csch^4 u$, and the metric is incomplete as $u\to 0$, as $K$ becomes unbounded.

\medskip
\item[3.] Assume $\epsilon =1$ and $C=0$.  We have $q' = 3 q^{1/3}$, a separable ODE,
with solution $q = (2s)^{3/2}$, after a suitable translation in $s$.  Using $u=\sqrt{2s}$, rescaling the metric, and letting $v$ be an appropriate
constant times $t$, we obtain the normal form \eqref{norm3}.
For this metric,
$K = u^{-4}$, and again the metric is incomplete (and $K$ becomes unbounded) as $u \to 0$.

\medskip
\item[4.] Assume $\epsilon =-1$; then $C$ is necessarily negative. Setting $C = -a^2$ in \eqref{intonce} gives
$$\left( \dfrac{q'}{3a}\right)^2 = 1 - \left(\dfrac{q^{1/3}}{a}\right)^2.$$
This is satisfied by setting $q'=dq/ds = -3a \sin u$ and $q^{1/3} = a \cos u$, so that $ds/du = a^2 \cos^2 u$.  By means similar to
Cases 1 and 2 we arrive at the normal form \eqref{norm4},
for which $K = -\sec^4 u$.  Once again, if we take the quotient by $v \mapsto v + 2\pi$,
this metric closes up smoothly as $u \to 0$.  However, it is incomplete as $u \to \pi/2$,
as $|K|$ becomes unbounded.
\end{enumerate}
\end{proof}

%
%
%
%

\begin{rem}
The metric \eqref{norm1} was known to Weingarten and Darboux as an example of a metric whose isometric embedding into Euclidean space is integrable by the method of Darboux \cite{Darboux}.  They were also aware of other examples of such metrics, but to date we have not been able to explicitly identify the metrics in Theorem \ref{normalf} in their work.
\end{rem}

\begin{rem}[Prolongation of the isometric embedding system]
In general, if a differential system fails to be Darboux integrable, it may happen that some {\it prolongation} of it is Darboux integrable. One can show that the first prolongation of the isometric embedding system $\CI$ is, in fact, the Gauss-Codazzi system for the embedding.  (This system is
discussed extensively in Section 6.4 in \cite{CfB2}, where it also appears as the prolongation
of the system generated by just $\theta_0,\theta_1,\theta_2$.) A straightforward, albeit somewhat tedious, computation shows that imposing the requirement that the Gauss-Codazzi system be Darboux integrable does not enlarge the class of Darboux integrable 2-metrics.
\end{rem}

\subsection{Embeddings with extrinsic symmetry}
Since each of the above metrics have intrinsic symmetry, we are led to ask whether they can be isometrically
immersed with extrinsic symmetry as well.  In fact, it is an exercise in elementary differential geometry to show that each of them can
be embedded as a surface of revolution in a 1-parameter family of ways, modulo rigid motion.  (In other words, we do not count
as distinct a pair of embeddings that differ only by a rigid motion.)  The single parameter controlling the shape
is $\alpha = |\vV(\theta)|$, the `speed' of the cylindrical coordinate $\theta$ with respect to the canonical Killing
field $\vV = J\nabla q$ along the surface.  For example, in Case 1 the metric is complete, but its embedding
as a surface of revolution is complete only for $\alpha = 3$, in which case one obtains the surface of
revolution $z=\tfrac12(x^2+y^2)$; for $\alpha >3$ the surface comes to a sharp point
where it intersects the axis, while for $\alpha <3$ there is a 1-dimensional boundary circle where $u=0$
(see Figure \ref{parbs}).

\begin{figure}[h]
\begin{center}
\includegraphics[width=1.4in]{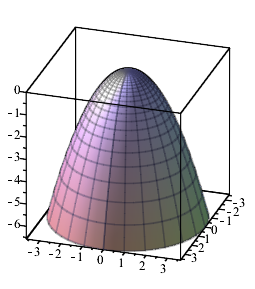}
\includegraphics[width=1.4in]{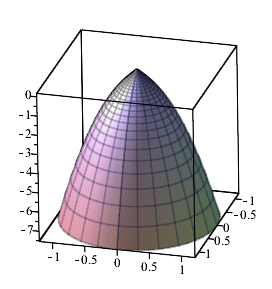}
\includegraphics[width=1.4in]{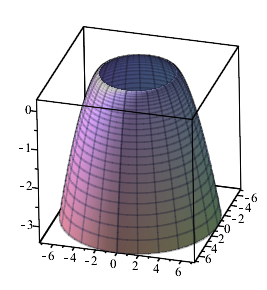}
\includegraphics[width=1.4in]{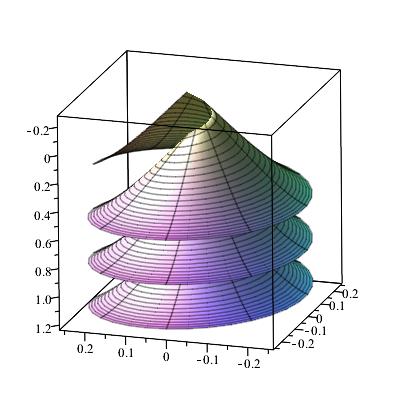}
\end{center}
\caption{Three embeddings of the Case 1 metric as a surface of revolution, and one embedding of the Case 4 metric
as a surface with screw-motion symmetry.}
\label{parbs}
\end{figure}

Of course, vector fields generating rotations are not the most general Killing fields in $\R^3$;
for example, the rightmost surface in Figure \ref{parbs} is the image of an embedding of
the Case 4 metric, where the Killing field on the surface coincides with an ambient Killing field
which generates a screw motion.  A general Killing field
at a point $p \in \R^3$ is given by
\begin{equation}\label{general-Killing}
\vK(p) = \vW + \brho \times \vZ
\end{equation}
where $\vW$ and $\vZ$ are fixed vectors with the latter nonzero, and $\brho$ is vector-valued function giving the position vector of $p$.  If one decomposes $\vW = a \vZ - p_0 \times \vZ$, then
then $\vK$ generates a `screw motion' combining
translation along $\vZ$ with rotation around an axis through $p_0$ parallel to $\vZ$.
We now show how to produce isometric immersions of our metrics such that $\vV$ is the restriction of a general Killing field.

\begin{thm}  Each metric whose isometric embedding system is Darboux-integrable has (modulo rigid motion) a 2-parameter family of local isometric immersions with extrinsic symmetry.
\end{thm}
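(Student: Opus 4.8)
The plan is to construct the desired immersions in \emph{equivariant} form, intertwining the intrinsic Killing field $\vV = J\nabla q$ with a one-parameter group of ambient isometries. By Theorem \ref{existence-of-Killing} we may work in the normal-form coordinates $(s,t)$ in which $\mg = ds^2 + q'(s)^2\,dt^2$ and $\vV = \di/\di t$, with $q$ a strictly monotone solution of \eqref{qode}. An immersion $\psi\colon M\to\R^3$ has extrinsic symmetry with $\psi_*\vV$ equal to the restriction of an ambient Killing field $\vK$ precisely when $\psi$ is equivariant, i.e. $\psi(s,t) = \Phi_t\bigl(\gamma(s)\bigr)$, where $\Phi_t$ is the flow of $\vK$ and $\gamma(s) := \psi(s,0)$ is a profile curve transverse to the orbits. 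Since $\vK$ is Killing, $\Phi_t$ is an isometry preserving $\vK$, so it suffices to arrange that the induced metric agrees with $\mg$ along the single curve $\gamma$; the agreement then propagates to all of $M$ by the flow.

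First I would normalize $\vK$. A general ambient Killing field has the form \eqref{general-Killing}, and after applying a rigid motion (which we are free to do, since immersions differing by a rigid motion are not counted as distinct) we may assume its axis is the $z$-axis, so that in cylindrical coordinates $(r,\phi,z)$ its flow is the screw motion $\Phi_t\colon (r,\phi,z)\mapsto (r,\phi+bt,\,z+abt)$, with $b>0$ the angular speed and $a$ the pitch. Writing the profile curve as $\gamma(s) = (r(s),\phi(s),z(s))$, a direct computation of the first fundamental form of $\psi(s,t) = (r\cos(\phi+bt),\,r\sin(\phi+bt),\,z+abt)$ gives coefficients
\begin{equation}\label{fund-forms}
E = r'^2 + r^2\phi'^2 + z'^2,\qquad F = b\,(r^2\phi' + a z'),\qquad G = b^2(r^2+a^2),
\end{equation}
which are functions of $s$ alone, as required by equivariance.

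Next I would impose $E=1$, $F=0$, $G = q'(s)^2$, matching \eqref{fund-forms} to the normal form (here $s$ is arc length along $\gamma$ and $(s,t)$ are orthogonal). These solve explicitly for the profile: the $G$-equation gives $r^2 = q'^2/b^2 - a^2$, the $F$-equation gives $\phi' = -a z'/r^2$, and substituting into $E=1$ yields $z'^2 = (1-r'^2)\,r^2/(r^2+a^2)$. Thus, for each admissible pair $(a,b)$, the functions $r(s)$, $z'(s)$, $\phi'(s)$ are determined (the remaining integration constants for $z$ and $\phi$ being rigid translation along and rotation about the axis), and integrating produces an immersion whose induced metric is exactly $\mg$. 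Regularity is automatic: strict monotonicity of $q$ forces $q'\neq 0$, hence $\vV\neq 0$, and the orthogonality $F=0$ makes $\psi_s,\psi_t$ independent. This exhibits the claimed $2$-parameter family, with the surfaces of revolution of the previous subsection recovered as the sub-family $a=0$.

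The step I expect to be the main obstacle is controlling \emph{admissibility} and the honest parameter count. One must check that the right-hand sides $q'^2/b^2-a^2$ and $(1-r'^2)\,r^2/(r^2+a^2)$ remain nonnegative on an open $s$-interval for an open set of pairs $(a,b)$---this is where the monotone range of $q'$ supplied by each normal form in Theorem \ref{normalf} enters---and that distinct $(a,b)$ yield genuinely inequivalent immersions modulo rigid motion, so that the family is truly $2$-dimensional rather than collapsing onto a smaller set. Finally, the Lorentzian metrics embedded in $\R^{1,2}$ are handled by the same equivariant scheme, with $\vK$ now drawn from the isometries of Minkowski space (rotations, boosts, and their screw-type combinations) and the signs in \eqref{fund-forms} adjusted according to the ambient signature.
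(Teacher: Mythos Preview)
Your approach is correct and takes a genuinely different route from the paper's.  The paper works on the frame bundle: it writes down the ``Killing system'' $\calJ$ expressing the condition that a tangent vector field on a surface in $\R^3$ is the restriction of an ambient Killing field, combines $\calJ$ with the isometric embedding system $\I_0$ (pulling back to the section of $\FM$ given by the coframing $(ds, q'\,dt)$), imposes $y_1=0$, $y_2=q'$, and then checks that the resulting Pfaffian system is Frobenius of rank $8$ on a $10$-dimensional product manifold.  The $2$-parameter count then falls out as $8-6$ after modding out by the $6$-dimensional group of rigid motions.  By contrast, you fix the screw-motion ansatz from the outset, compute the first fundamental form of the equivariant immersion directly in cylindrical coordinates, and solve algebraically for the profile curve in terms of the two screw-motion parameters $(a,b)$.

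What each approach buys: the Frobenius argument is cleaner on the existence side, since it sidesteps the admissibility inequalities you flag---local solutions exist through every point of the $10$-manifold automatically, and the two first integrals $\alpha=|\vZ|$ and $\beta=\vY\cdot\vZ$ emerge as invariants rather than being chosen in advance.  Your construction, on the other hand, is more explicit and constructive: it produces the immersion by a single quadrature for $(z,\phi)$ once $(a,b)$ are fixed, makes the surfaces of revolution visibly appear as the $a=0$ slice, and matches the paper's parameters via $\alpha\leftrightarrow b$, $\beta\leftrightarrow ab$ (up to normalization).  The admissibility check you identify as the main obstacle is genuine but routine for a \emph{local} statement: near any $s_0$, both $q'^2/b^2-a^2>0$ and $1-r'^2>0$ hold for $(a,b)$ in an open set, since $q'(s_0)\neq 0$ and the right-hand sides depend continuously on the parameters.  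The distinctness of immersions for distinct $(a,b)$ follows because these determine the isometry class of the ambient Killing field.
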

\begin{proof}  We will determine the equations that a vector field tangent to a surface in $\R^3$ must satisfy in order to be the
restriction of a Killing field.  We will then show that, when we add to these equations the
isometric embedding system for a surface $M$ with a metric $\mg$ satisfying the conditions of Theorem \ref{existence-of-Killing},
and impose the requirement that the embedding maps the Killing field on $M$ to the Killing field on the surface,
we obtain a Frobenius system.

Suppose that $S$ is a surface in $\R^3$ carrying an orthonormal frame field $(\ve_1, \ve_2, \ve_3)$ adapted with $\ve_3$ normal to the surface.
A tangent vector field $\vY$ is the restriction of a Killing field if and
only if there is a constant vector field $\vZ$ such that $\vY(p) - \brho \times \vZ$ is constant.  (Here,
$\brho$ denotes the position vector as a function on $S$.)  These conditions are equivalent to
$d\vZ = 0$ and $d\vY = d\brho \times \vZ$.
To express these conditions in terms of the
moving frame, we let
$$\vY = y_1 \ve_1 + y_2 \ve_2, \qquad \vZ\vert_S = z_1 \ve_1 + z_2 \ve_2 + z_3 \ve_3,$$
for some functions $y_i$ and $z_a$ on $S$.  (In what follows, let
$1\le i,j \le 2$ and $1 \le a,b \le 3$.)   By substituting the expansion of $\vZ$ into the condition $d\vZ=0$, and using \eqref{d-of-frame}, we obtain
\begin{equation}\label{zgroup} d z_a = -z_b \w^a_b. \end{equation}
(Here, $\w^a$, $\w^a_b$ denote the pullbacks of the canonical and connection forms from $\FR$ via the frame field.)
By substituting the expansions of $\vY$ and $\vZ$ into the condition $d\vY = d\brho \times \vZ$, computing
derivatives using \eqref{d-of-basepoint} and \eqref{d-of-frame}, and expanding both sides in terms of our frame field, we obtain the equations
\begin{align}
d y_1 + y_2 \w^1_2 &= z_3 \w^2 \label{x1deriv}\\
d y_2 + y_1 \w^2_1 &= -z_3 \w^1 \label{x2deriv} ,\\
y_1 \w^3_1+y_2 \w^3_2 &= z_2 \w^1 - z_1\w^2. \label{xrel}
\end{align}
Conversely, if $S$ is a surface in $\R^3$ carrying some adapted frame field, and there is a non-trivial solution to these equations,
then $S$ has an extrinsic symmetry, i.e., there is a Killing field which is tangent to $S$.  More formally, we define the {\em Killing system}
for surfaces in $\R^3$ as the following Pfaffian system
$$\calJ = \{ d y_1 + y_2 \w^1_2 - z_3 \w^2, d y_2 + y_1 \w^2_1 + z_3 \w^1, y_1 \w^3_1+y_2 \w^3_2 - z_2 \w^1 + z_1\w^2, dz_a + z_b \w^a_b, \w^3\}.$$
on $\FR \times \R^2 \times \R^3$, where $y_i$ and $z_a$ are coordinates on the last two factors.  Then any integral surface of this
EDS (satisfying the usual independence condition) corresponds to a surface in Euclidean space with a tangential Killing field.

By Theorem \ref{existence-of-Killing}, the metric on $M$ will have local coordinates $s,t$ such that
$\eta^1 = ds$, $\eta^2 = q' dt$ is an orthonormal coframe field with connection form $\eta^1_2 = -q''(s)dt$, where $q(s)$ satisfies the ODE \eqref{qode} and the Killing field is given by $\vV =\di/\di t$.  By fixing this choice of coframing on $M$, we
obtain the isometric embedding system
$$\I_0 = \{ \w^3, \w^1 - ds, \w^2 - q' dt, \w^1_2 + q'' dt\},$$
defined on $M \times \FR$.  (The generators here are the pullbacks of the generators of $\I$ by the section of $\FM$ represented by the
chosen coframing.)  The condition that the embedding take the Killing field $\vV$ on $M$ to the vector field $\vX$ is equivalent
to requiring that $y_1 = 0$ and $y_2 = \eta^2(\vV) = q'$.  When we pull $\calJ$ back to the submanifold where these equations hold, and combine with the
EDS $\I_0$, we obtain
\begin{multline*}
\calK = \I_0 + \calJ = \{ \w^3, \w^1 - ds, \w^2 - q'dt, \w^1_2 + q'' dt, q' (\w^1_2 - z_3 dt),\\
 (q''+z_3)ds, q' (\w^3_2 + z_1 dt) - z_2 ds, dz_a + z_b \w^a_b \}.\end{multline*}
Solutions satisfying the independence condition $ds \wedge dt \ne 0$ exist only on the subset where $z_3 =-q''$, so we pull the EDS back to that
subset, obtaining
\begin{multline*}
\calK_0 = \{ \w^3, \w^1 - ds, \w^2 - q'dt, \w^1_2 + q'' dt,  \w^3_2 + z_1 dt - (z_2/q') ds, dz_1 + q''(w^3_1-z_2 dt), \\
dz_2 + z_2 q''/q' ds, dq'' + (z_1/q'')dz_1 - ((z_2)^2/q')ds \}.
\end{multline*}
We regard this rank 8 Pfaffian system as being defined on
the product $M \times \FR \times \R^2$ (where $z_1, z_2$ are coordinates on the last factor).   It is easy to check that it satisfies the Frobenius condition, so there is an 8-parameter family of solutions (i.e., a unique integral surface through every point in the 10-dimensional product).  Because each surface lies in a 6-dimensional family related by rigid motion, each metric has a 2-parameter family of non-congruent isometric immersions with extrinsic symmetry.
\end{proof}

\begin{rem}
The two parameters occur as first integrals of the above system.  For, along any solution there are constants $\alpha$ and $\beta$ such that
$z_2 = \beta/q'$ and
$$(z_1)^2 + \beta^2/(q')^2 + (q'')^2 = |\vZ|^2 = \alpha^2.$$
Since $\vY \cdot \vZ = q' z_2 = \beta$, the immersions where the Killing field $\vY$ is purely rotational (and the image is a surface
of revolution) are those where $\beta=0$.  In general, the parameter $\alpha$ is the angular velocity of the Killing field, while $\beta$ is proportional to its slope.

Once these two constants are chosen, the embedding is completely determined up to rigid motions.
On the other hand, we know that for each of these metrics there is a family
of embeddings depending on two {\em functions} of one variable.  So, for
a generic embedding $\vV$ will not coincide with the restriction of any extrinsic Killing field.
\end{rem}

\section{Immersions of Lorentzian Surfaces into $\R^{1,2}$}\label{LorentzianSurfaces}
In this section, we study the Lorentzian analog of the problem of \S \ref{ImmersionsToE3} by considering the Darboux integrability of embedding Lorentzian 2-metrics into $1+2$-Minkowski space. Throughout this section, we use the same sign conventions as in \cite{ClellandBook}.

\subsection{The Isometric Embedding System and Integrability Conditions}
Let $\R^{1,2}$ be Minkowski space with an inner product $\langle, \rangle$ of signature $(1,2)$.  Define a frame $(\ve_1, \ve_2, \ve_3)$ as
being {\em orthonormal} if
\[\langle\ve_1,\ve_1\rangle=1,\quad \langle\ve_2,\ve_2\rangle=-1,\quad\langle\ve_3,\ve_3\rangle=-1,\quad \langle\ve_a,\ve_b\rangle=0,\quad\text{for}~a\neq b.\]
Let $\FRm$ be the orthonormal frame bundle of Minkowski space, which carries canonical forms $\w^i$ and connection forms $\w^i_j$
satisfying the usual structure equations \eqref{R3streq}, and basepoint and frame vector functions satisfy \eqref{d-of-basepoint} and
\eqref{d-of-frame}.  However, because the structure group of $\FRm$ is $SO(1,2)$, the connection forms satisfy $\w^i_j=0$ when $i=j$ and
\begin{equation}\label{MinkowskiConnectionForms}
\omega^1_2=\omega^2_1,\quad \omega^1_3=\omega^3_1,\quad \omega^3_2=-\omega^2_3.
\end{equation}

Let $M$ be a Lorentzian surface. Let $\FM$ be the orthonormal frame bundle of $M,$ which carries canonical dual forms $\eta^1,\eta^2$ and connection form $\eta^1_2,$ satisfying the structure equations
\begin{equation}\label{SurfaceStr}
d\eta^1=-\eta^1_2\wedge\eta^2,\quad d\eta^2=-\eta^2_1\wedge\eta^1,\quad d\eta^1_2=-K\eta^1\wedge\eta^2,
\end{equation}
where $\eta^1_2=\eta^2_1$ and $K$ is the Gauss curvature of $M.$

As in the Riemannian case, we define a Pfaffian system $\mathcal{I}$ on $\FM \times\FRm$, generated by the 1-forms
\begin{equation}\label{IsometricEDS}
\theta_0:=\omega^3,\quad \theta_1:=\omega^1-\eta^1,\quad\theta_2:=\omega^2-\eta^2,\quad\theta_3:=\omega^1_2-\eta^1_2 .
\end{equation}
Similar computations to those in the Riemannian case show that $\CI$ is generated algebraically by $\th_0, \th_1, \th_2, \th_3$ together with the 2-forms
\begin{equation}\label{IsometricEDS2forms}
\Omega_0:=\omega^3_1\wedge \eta^1+\omega^3_2\wedge\eta^2,\quad \Omega_1:=\omega^3_1\wedge \omega^3_2 - K\eta^1\wedge\eta^2.
\end{equation}
Decomposable linear combinations $a\Omega_1 + b\Omega_0$ are given by taking $b/a=\pm \sqrt{-K}.$ Thus, $\mathcal{I}$ is hyperbolic when $K<0$ and the elliptic when $K>0.$  (Our introduction of a minus sign in the definition of $K$ in \eqref{SurfaceStr}
allows us to label the Lorentzian cases in this way, so that they are analogous to the Riemannian cases.)

We have the following theorem, whose statement is exactly analogous to those of Theorems \ref{IcondsRiemHyp} and \ref{IcondsRiemEll}, with some
differences in signs and signature.  As in the Riemannian case, in order to treat both the elliptic and hyperbolic cases at the same time, we let
$\epsilon = \operatorname{sgn}(K)$.

\begin{thm}\label{IcondsMink}
Let $\mg$ be a Lorentzian metric on $M$, with Gauss curvature $K$ that is either strictly positive ($\epsilon=1$) or strictly negative ($\epsilon=-1$).
Let $q = |K|^{-3/4}$.  Then $\CI$ is Darboux integrable if and only if the covariant derivatives of $q$ (as defined by \eqref{q-covariant-derivatives}) satisfy the conditions
\begin{equation}\label{qcond-Lorentzian}
q_{11}= -q_{22} = 3 \eps q^{-1/3}, \qquad q_{12}=q_{21}=0.
\end{equation}
\end{thm}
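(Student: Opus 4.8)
The plan is to mirror the proof of Theorem \ref{IcondsRiemHyp}, adapting every step to the Lorentzian signature, and to track carefully where the signs introduced by \eqref{MinkowskiConnectionForms} and by the modified structure equations \eqref{SurfaceStr} differ from the Riemannian computation. First I would record the decomposable factorization of the generating 2-forms in the hyperbolic case $K<0$: writing $k=\sqrt{-K}$, I expect a factorization of the form $\Omega_1\pm k\Omega_0 = (\w^3_1\mp k\eta^2)\wedge(\w^3_2\pm k\eta^1)$, analogous to the Riemannian case but with the factors reflecting the sign relations $\w^1_3=\w^3_1$ and $\w^3_2=-\w^2_3$. This identifies the two singular Pfaffian systems
$$
\pV_\pm=\{\theta_0,\theta_1,\theta_2,\theta_3,\ \w^3_1\mp k\eta^2,\ \w^3_2\pm k\eta^1\},
$$
and by Lemma \ref{DI_for_semilinearSystems} (in the guise stated at the end of \S\ref{Darboux-int-sec}, that each characteristic distribution needs at least two independent first integrals) Darboux integrability is equivalent to $\pV_\pm^\infty$ each having rank at least $2$.

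Next I would compute the derived flags of $\pV_+$ and $\pV_-$, exactly as in the Riemannian proof, using the Minkowski structure equations \eqref{R3streq} together with \eqref{MinkowskiConnectionForms} and \eqref{SurfaceStr}. The key is that the relations \eqref{MinkowskiConnectionForms} and the sign in $d\eta^1_2=-K\eta^1\wedge\eta^2$ alter the coefficients appearing in $\pV_+^{(1)},\pV_+^{(2)}$, and hence change the algebraic conditions that force $\pV_+^{(3)}$ to drop to rank $2$. Imposing that $\pV_+^{(3)}$ and $\pV_-^{(3)}$ both be rank $2$ should yield a system of second-order covariant equations for $k$ of the same shape as \eqref{khypercond}, but with signs rearranged. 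I then rule out any spurious branch (the analog of \eqref{strawcond}) by taking a further covariant derivative and checking that the resulting compatibility conditions are inconsistent, just as in the Riemannian argument. For the elliptic case $K>0$ I would use complex decomposable combinations $\Omega_1\pm\ri k\Omega_0$ and a single complex singular system $\CW$ together with its conjugate, following the sketch in Theorem \ref{IcondsRiemEll}.

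Finally, I would translate the resulting conditions on $k$ into conditions on $q=|K|^{-3/4}=k^{-3/2}$ via the covariant equations \eqref{q-covariant-derivatives}, expecting the elliptic and hyperbolic results to combine into the single statement \eqref{qcond-Lorentzian}, namely $q_{11}=-q_{22}=3\eps q^{-1/3}$ and $q_{12}=q_{21}=0$. The crucial feature distinguishing this from the Riemannian answer \eqref{qcond} is the relative minus sign between $q_{11}$ and $q_{22}$, which I expect to originate precisely from the indefinite signature $\langle\ve_2,\ve_2\rangle=-1$ and the symmetric (rather than antisymmetric) connection relations in \eqref{MinkowskiConnectionForms}; verifying that a single factor of $\eps$ and this sign flip account for all the discrepancies is where I would be most careful.

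The main obstacle I anticipate is bookkeeping rather than conceptual: because the Lorentzian connection forms are no longer skew ($\eta^1_2=\eta^2_1$, and the mixed relations in \eqref{MinkowskiConnectionForms}), the covariant derivative operators and the second covariant derivatives $q_{ij}$ behave differently, so I must verify that the derived-flag computation still terminates in a genuine Frobenius system and that no sign error masquerades as an extra integrability condition. The delicate point is confirming that exactly the branch analogous to \eqref{khypercond}—and not the straw-man branch analogous to \eqref{strawcond}—survives, and that it reproduces \eqref{qcond-Lorentzian} with the correct signs after the substitution $q=k^{-3/2}$; I would guard against errors here by specializing to one of the explicit normal-form metrics and checking the conditions directly.
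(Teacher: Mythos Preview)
Your proposal is correct and follows exactly the approach the paper takes: the paper does not give a separate proof of Theorem~\ref{IcondsMink}, but simply indicates that it is analogous to Theorems~\ref{IcondsRiemHyp} and~\ref{IcondsRiemEll} with sign changes coming from the Lorentzian signature, which is precisely the computation you outline. Your plan to track the signs through the derived-flag computation, rule out the spurious branch, and verify via a normal-form metric is more explicit than what the paper provides, but is the intended argument.
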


%
%

\subsection{Killing Fields}

As in the Riemannian case, it is straightforward to show that when the function $q$ on $\FM$ satisfies the differential equations \eqref{qcond-Lorentzian}, the vector field
\[ \vV = q_1 \ve_2 - q_2 \ve_1 \]
is a Killing field for the metric $\bsy{g}$.  But in the Lorentzian case, we need to consider separately the possibilities that this vector field is spacelike or timelike (i.e., $\langle \vV, \vV \rangle <0$ or $\langle \vV, \vV \rangle >0$, respectively).

\begin{rem}
In principle, we should also consider the possibility that $\vV$ is lightlike.  This is the case if and only if $q_2 = \pm q_1$, but if this condition holds on any open subset of $\FM$, then equations \eqref{qcond-Lorentzian} are inconsistent.  Therefore, $\vV$ cannot be lightlike on any open subset of $M$, and we will restrict to the open subset of $M$ where $\vV$ is either spacelike or timelike.
\end{rem}

If $\vV$ is spacelike, then we can choose an orthonormal framing $\ve_1, \ve_2$ on $M$ such that $\vV$ is a positive multiple of the spacelike vector $\ve_2$.  On the other hand, if $\vV$ is timelike, then we can choose an orthonormal framing such that $\vV$ is a positive multiple of the timelike vector $\ve_1$.  Having done so, arguments analogous to those given in the Riemannian case may be used to prove the following.

\begin{thm}Suppose that $\bsy{g}$ is a Lorentzian metric on $M$ for which the embedding EDS $\CI$ is Darboux integrable. Then $\bsy{g}$  admits a Killing field $\vV$, and:
\begin{itemize}
\item If $\vV$ is spacelike, then
there exist local coordinates $(s,t)$ on $M$ with respect to which the metric can be written in the form
$$
\bsy{g}=ds^2 - q'(s)^2dt^2,
$$
where $q$ is a strictly monotone solution of the ODE
$$
q''=3\eps q^{-1/3}.
$$
\item If $\vV$ is timelike, then
there exist local coordinates $(s,t)$ on $M$ with respect to which the metric can be written in the form
$$
\bsy{g}=q'(s)^2dt^2 - ds^2,
$$
where $q$ is a strictly monotone solution of the ODE
$$
q''=-3\eps q^{-1/3}.
$$
\end{itemize}
In either case, in these coordinates the Killing field is given by $\displaystyle \vV=\frac{\partial}{\partial t}$.
\end{thm}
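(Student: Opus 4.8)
The plan is to mirror the Riemannian proof of Theorem \ref{existence-of-Killing}, making the sign and signature adjustments dictated by the Lorentzian structure equations \eqref{SurfaceStr} and the modified Darboux condition \eqref{qcond-Lorentzian}. Since the statement already tells us that $\vV = q_1 \ve_2 - q_2 \ve_1$ is Killing, the first task is to reduce to an adapted frame. In the spacelike case I would choose the orthonormal framing so that $\vV$ is a positive multiple of $\ve_2$, giving $q_2 = 0$ and $q_1 > 0$; in the timelike case I would instead align $\vV$ with $\ve_1$, so that $q_1 = 0$ and $q_2 > 0$. The remainder of each argument then tracks the Riemannian computation almost verbatim, with the Gauss-curvature sign in \eqref{SurfaceStr} and the relation $\eta^1_2 = \eta^2_1$ supplying the signature differences.

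Concretely, in the spacelike case I substitute $q_2 = 0$ together with \eqref{qcond-Lorentzian} into the covariant equation \eqref{dq-second}. The equation for $dq_2$ forces a relation expressing $\eta^1_2$ as a multiple of $\eta^2$, and from the structure equation $d\eta^1 = -\eta^1_2 \wedge \eta^2$ one deduces $d\eta^1 = 0$, so $\eta^1 = ds$ for a local coordinate $s$. Because $\eta^1_2 = \eta^2_1$ in the Lorentzian setting, one must check that the sign bookkeeping in \eqref{qcond-Lorentzian}---where now $q_{11} = -q_{22} = 3\eps q^{-1/3}$ rather than $q_{11}=q_{22}$---still yields a closed $\eta^1$; this is the one place where the signature genuinely intervenes, and it determines whether the resulting ODE is $q'' = 3\eps q^{-1/3}$ or $q'' = -3\eps q^{-1/3}$. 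The equation for $dq_1$ then gives precisely the ODE in the theorem statement, and $q$ is seen to depend on $s$ alone with $q_1 = q'(s)$.

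For the flowbox step I would introduce a coordinate $w$ with $dw(\vV) = 1$ via flowbox coordinates, then replace it by an orthogonal coordinate $t$. As in the Riemannian argument, the correction $dt = dw - \mu\,ds$ is closed because $\mu$ is a function of $s$ alone, which follows from the fact that flow by the Killing field $\vV$ preserves the metric components in the $(s,w)$ coordinates, so $\bsy{g}(\di/\di s, \vV)$ depends only on $s$. In these coordinates $\eta^2 = q'\,dt$, and assembling $\bsy{g} = (\eta^1)^2 - (\eta^2)^2 = ds^2 - q'(s)^2 dt^2$ gives the spacelike normal form; here the minus sign is forced by $\langle \ve_2, \ve_2 \rangle = -1$. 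The timelike case is symmetric: aligning $\vV$ with the timelike $\ve_1$ interchanges the roles of the two coframe directions, produces $d\eta^2 = 0$ and hence $\eta^2 = ds$, yields the sign-flipped ODE $q'' = -3\eps q^{-1/3}$, and assembles to $\bsy{g} = q'(s)^2 dt^2 - ds^2$.

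The main obstacle I anticipate is bookkeeping rather than conceptual: keeping the signature signs consistent through the substitution into \eqref{dq-second}, since the Lorentzian connection identity $\eta^1_2 = \eta^2_1$ and the asymmetric condition $q_{11} = -q_{22}$ together make it easy to drop or double a sign when deriving the ODE and when verifying that the candidate coordinate differential is closed. I would handle this by carrying out the spacelike case in full with explicit attention to which frame vector is timelike, and then invoking the symmetry between the two cases---obtained by swapping the indices $1 \leftrightarrow 2$ and the sign of $\eps$---to deduce the timelike normal form without repeating the computation.
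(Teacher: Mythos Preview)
Your proposal is correct and follows exactly the approach the paper itself indicates: the paper gives no detailed proof of this theorem, saying only that ``arguments analogous to those given in the Riemannian case may be used,'' and your plan is precisely to carry out that analogy with attention to the Lorentzian signature changes in \eqref{SurfaceStr} and \eqref{qcond-Lorentzian}. Your identification of the sign bookkeeping as the only genuine subtlety, and your strategy of doing the spacelike case carefully and then invoking the $1\leftrightarrow 2$, $\eps \leftrightarrow -\eps$ symmetry for the timelike case, is sound.
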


\subsection{Normal Forms for Darboux Integrable Metrics}

Normal forms may be obtained by the same integration procedure as that used in \S \ref{normal-forms-Riemannian-sec}.  In the Lorentzian case, the result depends both on the type of the system (hyperbolic or elliptic) and on the type of the Killing field (spacelike or timelike).  Carrying out this procedure yields the following theorems.

\begin{thm}
Let $\bsy{g}$ be an elliptic Darboux integrable Lorentzian metric with a spacelike Killing field on $M$.  Then up to local coordinate transformations on $M$, $\bsy{g}$ is locally equivalent to one of the following:
\begin{enumerate}
\item $\bsy{g} = \cosh^4 u\, (du)^2 - \sinh^2 u\, (dv)^2$, with $K = \sech^4 u$;
\item $\bsy{g} = \sinh^4 u\, (du)^2 - \cosh^2 u\, (dv)^2$, with $K = \csch^4 u$;
\item $\bsy{g} = u^2 \left( (du)^2 - (dv)^2 \right)$, with $K = u^{-4}$.
\end{enumerate}
\end{thm}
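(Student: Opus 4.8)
The plan is to reduce immediately to the single-variable ODE already produced in the preceding Killing-field theorem, and then mimic, case by case, the integration carried out in the proof of Theorem~\ref{normalf}. Since the metric is elliptic ($\eps=1$) and the Killing field $\vV$ is spacelike, that theorem supplies local coordinates $(s,t)$ in which
\[
\mg = ds^2 - q'(s)^2\,dt^2,
\]
where $q(s)$ is a strictly monotone solution of $q'' = 3q^{-1/3}$. The only difference from the Riemannian elliptic case treated in \S\ref{normal-forms-Riemannian-sec} is the sign of the $dt^2$-coefficient, which is forced by the Lorentzian signature; the underlying ODE is identical. Recalling that $q = |K|^{-3/4}$, so that $K = q^{-4/3}$ in the elliptic case, will let me read off the stated curvatures at the end of each case.

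First I would integrate the ODE once. Multiplying $q''=3q^{-1/3}$ by $q'$ and taking antiderivatives gives
\[
(\tfrac13 q')^2 = q^{2/3} - C
\]
for a constant $C$, exactly as in \eqref{intonce} with $\eps=1$. The classification then splits into the three cases $C>0$, $C<0$, and $C=0$.

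Next I would treat each case by the substitutions used in \S\ref{normal-forms-Riemannian-sec}. For $C=a^2>0$, setting $q'=3a\sinh u$ and $q^{1/3}=a\cosh u$ yields $ds/du=a^2\cosh^2 u$; after rescaling $a\to 1$ and absorbing the leftover constant factor into $v$, the metric becomes $\cosh^4 u\,(du)^2 - \sinh^2 u\,(dv)^2$, and $K=q^{-4/3}=\sech^4 u$. For $C=-a^2<0$, the substitution $q'=3a\cosh u$, $q^{1/3}=a\sinh u$ gives $ds/du=a^2\sinh^2 u$ and, after the same normalizations, the form $\sinh^4 u\,(du)^2 - \cosh^2 u\,(dv)^2$ with $K=\csch^4 u$. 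For $C=0$ the separable equation $q'=3q^{1/3}$ integrates to $q=(2s)^{3/2}$; putting $u=\sqrt{2s}$ and rescaling gives $u^2\big((du)^2-(dv)^2\big)$ with $K=u^{-4}$. These are precisely the three listed normal forms.

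I do not expect a genuine obstacle here: every computation is the Riemannian computation of Theorem~\ref{normalf} with a single sign flipped, and the curvature values follow from the identity $K=q^{-4/3}$. The only points requiring care are bookkeeping ones---tracking the Lorentzian signature through the substitutions so that the $dt^2$-term retains its minus sign throughout, and verifying that the scaling symmetry $s\mapsto\lambda s$, $q\mapsto\lambda^{3/2}q$ of \eqref{qode}, together with the reparametrization of $v$, really does collapse each one-parameter family to the single displayed representative.
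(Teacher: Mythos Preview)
Your proposal is correct and follows exactly the approach the paper takes: the paper itself simply states that ``Normal forms may be obtained by the same integration procedure as that used in \S\ref{normal-forms-Riemannian-sec},'' and your write-up supplies precisely those details with the one sign change coming from the Lorentzian signature.
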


\begin{thm}
Let $\bsy{g}$ be an elliptic Darboux integrable Lorentzian metric with a timelike Killing field on $M$.  Then up to local coordinate transformations on $M$, $\bsy{g}$ is locally equivalent to
\[ \bsy{g} = \sin^2 u\, (dv)^2 - \cos^4 u\, (du)^2,\ \text{with} \ K = \sec^4 u.  \]
\end{thm}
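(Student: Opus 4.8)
The plan is to follow the one-time integration procedure used in the proof of Theorem~\ref{normalf}, adapted to the signs and signature appropriate to the timelike elliptic case, and to observe that the positivity constraints here leave only a single surviving branch. By the preceding Killing-field theorem of this section, a timelike Killing field in the elliptic case ($\eps = 1$) produces local coordinates $(s,t)$ in which $\mg = q'(s)^2\,dt^2 - ds^2$, where $q$ is a strictly monotone solution of $q'' = -3 q^{-1/3}$. The first thing I would note is that this is exactly the ODE governing the Riemannian \emph{hyperbolic} case, i.e.\ \eqref{qode} with $\eps = -1$. Consequently, multiplying by $q'$ and taking antiderivatives just as in the passage producing \eqref{intonce} gives a first integral of the form
\[
(\tfrac13 q')^2 = C - q^{2/3}
\]
for some constant $C$.

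The next step is to determine which values of $C$ can occur, and this is the one place where the Lorentzian timelike case genuinely differs from its Riemannian elliptic counterpart. Since $q = |K|^{-3/4} > 0$, the right-hand side $C - q^{2/3}$ can be non-negative only when $C > 0$; the value $C = 0$ forces $q \equiv 0$ (degenerate), and $C < 0$ admits no real solution. Thus, unlike the Riemannian elliptic case, which branched into the three forms \eqref{norm1}--\eqref{norm3}, here a single case survives, which accounts for the uniqueness of the normal form. Writing $C = a^2$ with $a > 0$, the first integral becomes $(q'/3a)^2 = 1 - (q^{1/3}/a)^2$, identical to the equation solved in Case~4 of Theorem~\ref{normalf}. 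I would then parametrize by $q^{1/3} = a\cos u$ and $q' = -3a\sin u$ on the interval $0 < u < \pi/2$; differentiating the first relation against the second yields $ds/du = a^2\cos^2 u$. Substituting $(q')^2 = 9a^2\sin^2 u$ and $ds^2 = a^4\cos^4 u\,(du)^2$ into $\mg = (q')^2\,dt^2 - ds^2$ produces
\[
\mg = 9a^2 \sin^2 u\,(dt)^2 - a^4 \cos^4 u\,(du)^2.
\]

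Finally, I would invoke the scaling symmetry $s \mapsto \lambda s$, $q \mapsto \lambda^{3/2} q$ of the ODE to normalize $a = 1$, and relabel $v = 3t$, arriving at $\mg = \sin^2 u\,(dv)^2 - \cos^4 u\,(du)^2$ as claimed; the restriction $0 < u < \pi/2$ keeps both coefficients finite and nonvanishing, so that $\mg$ is a genuine Lorentzian metric of signature $(+,-)$ rather than a degenerate one. The Gauss curvature then follows from $K = q^{-4/3} = (\cos^3 u)^{-4/3} = \sec^4 u$, completing the identification. I do not anticipate a serious obstacle, since the computation runs parallel to Theorem~\ref{normalf}; the only points requiring care are tracking the sign flips induced by the timelike signature in $\mg = q'^2\,dt^2 - ds^2$, confirming that the positivity of the first integral rules out all but one branch (hence the uniqueness, with no analogue of the three elliptic Riemannian cases), and verifying that the resulting metric is nondegenerate and Lorentzian on the stated $u$-interval.
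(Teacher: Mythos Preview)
Your proof is correct and follows exactly the approach the paper indicates: carry over the integration procedure from Theorem~\ref{normalf}, noting that the timelike elliptic ODE $q'' = -3q^{-1/3}$ coincides with the Riemannian hyperbolic ODE, so the first integral forces $C>0$ and only Case~4 survives, yielding the single normal form after normalizing $a=1$ and setting $v=3t$. The paper itself does not spell out this computation for the Lorentzian theorems, merely stating that ``normal forms may be obtained by the same integration procedure,'' so your write-up is precisely the intended argument made explicit.
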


\begin{thm}
Let $\bsy{g}$ be a hyperbolic Darboux integrable Lorentzian metric with a spacelike Killing field on $M$.  Then up to local coordinate transformations on $M$, $\bsy{g}$ is locally equivalent to
\[ \bsy{g} = \cos^4 u\, (du)^2 - \sin^2 u\, (dv)^2,\ \text{with} \ K = -\sec^4 u.  \]
\end{thm}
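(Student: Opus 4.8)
The plan is to follow, almost verbatim, the one-parameter integration procedure used to prove Theorem \ref{normalf}, specialized now to the hyperbolic Lorentzian setting with spacelike $\vV$. By the preceding theorem establishing the Killing-field normal forms, there are local coordinates $(s,t)$ in which
$$\bsy{g} = ds^2 - q'(s)^2\, dt^2, \qquad \vV = \di/\di t,$$
where $q(s)$ is a strictly monotone solution of $q'' = 3\eps q^{-1/3}$. Since the system is hyperbolic we have $\eps = \operatorname{sgn}(K) = -1$, so the relevant ODE is $q'' = -3 q^{-1/3}$, and throughout $q = |K|^{-3/4}>0$.

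First I would integrate this ODE once. Multiplying by $q'$ and taking antiderivatives yields, exactly as in \eqref{intonce},
$$\left(\tfrac13 q'\right)^2 = \eps q^{2/3} - C = -q^{2/3} - C$$
for some constant $C$. The crucial difference from the elliptic analysis is one of sign: because the left-hand side is nonnegative while $-q^{2/3}$ is strictly negative (as $q>0$), the constant $C$ is forced to be strictly negative, and no other value is admissible. This is precisely why the hyperbolic case produces a single normal form rather than the three branches seen in the elliptic case. Writing $C = -a^2$ with $a>0$ gives
$$\left(\frac{q'}{3a}\right)^2 = 1 - \left(\frac{q^{1/3}}{a}\right)^2.$$

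Next I would parametrize the solution curve by $q' = -3a\sin u$ and $q^{1/3} = a\cos u$, mirroring Case 4 of Theorem \ref{normalf}. Differentiating the second relation with respect to $s$ and using the first gives $ds/du = a^2\cos^2 u$, so that $\eta^1 = ds = a^2\cos^2 u\, du$ and $(q')^2 = 9a^2\sin^2 u$. Substituting into $\bsy{g}$ produces $\bsy{g} = a^4\cos^4 u\, du^2 - 9a^2\sin^2 u\, dt^2$; absorbing the scaling by setting $a=1$ and $v = 3t$ (this freedom coming, as before, from the translation and scaling symmetries of the ODE) yields the claimed normal form $\bsy{g} = \cos^4 u\, (du)^2 - \sin^2 u\, (dv)^2$ on $0<u<\pi/2$. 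The curvature statement is then immediate from the defining relation in Theorem \ref{IcondsMink}: since $q = |K|^{-3/4}$ and $K<0$, we have $K = -q^{-4/3} = -(\cos^3 u)^{-4/3} = -\sec^4 u$.

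The individual steps — the single integration, the trigonometric parametrization, and the rescaling — are entirely parallel to the Riemannian proof, so I expect no substantive obstacle. The only point requiring genuine care is the sign bookkeeping: verifying that with $\eps=-1$ the nonnegativity of $\left(\tfrac13 q'\right)^2$ excludes $C\geq 0$ and thereby collapses the analysis to the single admissible branch, and checking that the spacelike character of $\vV$ correctly places the minus sign on the $dt^2$ term, consistent with Lorentzian signature.
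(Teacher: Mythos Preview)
Your proposal is correct and follows essentially the same approach as the paper, which simply refers back to the integration procedure of Theorem~\ref{normalf} (Case~4) and carries it over with the appropriate sign changes for the Lorentzian spacelike-Killing-field normal form. Your sign analysis showing that $C<0$ is forced, and hence only the single trigonometric branch survives, is exactly the point.
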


\begin{thm}
Let $\bsy{g}$ be a hyperbolic Darboux integrable Lorentzian metric with a timelike Killing field on $M$.  Then up to local coordinate transformations on $M$, $\bsy{g}$ is locally equivalent to one of the following:
\begin{enumerate}
\item $\bsy{g} = \sinh^2 u\, (dv)^2 - \cosh^4 u\, (du)^2$, with $K = -\sech^4 u$;
\item $\bsy{g} = \cosh^2 u\, (dv)^2 - \sinh^4 u\, (du)^2$, with $K = -\csch^4 u$;
\item $\bsy{g} = u^2 \left( (dv)^2 - (du)^2 \right)$, with $K = -u^{-4}$.
\end{enumerate}
\end{thm}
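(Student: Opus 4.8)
The plan is to integrate the structure equations exactly as in the proof of Theorem~\ref{normalf}, adapting the bookkeeping to the Lorentzian signature and to the timelike Killing direction. By the preceding normal-form theorem for Killing fields, a hyperbolic Darboux integrable Lorentzian metric with a timelike Killing field admits local coordinates $(s,t)$ in which
$$
\bsy{g} = (q')^2\,dt^2 - ds^2, \qquad q'' = -3\eps q^{-1/3},
$$
with $q$ strictly monotone and $\vV = \partial/\partial t$. Since the hyperbolic case is $\eps=-1$, this ODE reduces to $q'' = 3 q^{-1/3}$, which is identical to equation \eqref{qode} governing the elliptic Riemannian case with $\eps = 1$. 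Consequently the same first integral is available: multiplying by $q'$ and taking antiderivatives yields
$$
(\tfrac13 q')^2 = q^{2/3} - C
$$
for a constant $C$, exactly as in \eqref{intonce}.

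First I would split into the three sign cases for $C$, reproducing the parametrizations from Cases 1--3 of Theorem~\ref{normalf}. For $C = a^2 > 0$ one sets $q' = 3a\sinh u$, $q^{1/3} = a\cosh u$, so that $ds/du = a^2\cosh^2 u$; for $C = -a^2 < 0$ one sets $q' = 3a\cosh u$, $q^{1/3} = a\sinh u$, giving $ds/du = a^2\sinh^2 u$; and for $C = 0$ the separable ODE $q' = 3q^{1/3}$ integrates to $q = (2s)^{3/2}$, so that $u = \sqrt{2s}$ yields $ds/du = u$. In each case I would substitute these into $\bsy{g} = (q')^2\,dt^2 - ds^2$, normalize the scale by setting $a = 1$, and replace $t$ by an appropriate constant multiple $v$. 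This produces $\bsy{g} = \sinh^2 u\,dv^2 - \cosh^4 u\,du^2$, then $\bsy{g} = \cosh^2 u\,dv^2 - \sinh^4 u\,du^2$, and finally $\bsy{g} = u^2(dv^2 - du^2)$, matching the three stated forms.

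To finish I would record the Gauss curvature. From $q = |K|^{-3/4}$ we have $|K| = q^{-4/3}$, and hyperbolicity $K<0$ forces $K = -q^{-4/3}$; evaluating $q = \cosh^3 u$, $q = \sinh^3 u$, and $q = (2s)^{3/2} = u^3$ (at $a=1$) gives $K = -\sech^4 u$, $K = -\csch^4 u$, and $K = -u^{-4}$ respectively, confirming $K<0$ throughout and completing the identification.

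The computation is essentially routine, being the signature-twisted mirror of Cases 1--3 of Theorem~\ref{normalf}; the only point requiring care is the sign bookkeeping. One must track that the Lorentzian structure equation \eqref{SurfaceStr} carries a minus sign in $d\eta^1_2 = -K\eta^1\wedge\eta^2$, which is precisely what flips the sign of $K$ relative to the elliptic Riemannian normal forms, and one must verify that the chosen branches of $\sinh$ and $\cosh$ keep $q$ strictly monotone so that $\bsy{g}$ stays nondegenerate of the correct signature. I expect no genuine analytic obstacle beyond this accounting.
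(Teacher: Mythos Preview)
Your proposal is correct and follows exactly the approach the paper intends: the paper does not spell out a proof for this theorem but simply states that ``normal forms may be obtained by the same integration procedure as that used in \S\ref{normal-forms-Riemannian-sec},'' and you have carried out precisely that procedure, correctly observing that the ODE $q'' = -3\eps q^{-1/3}$ with $\eps=-1$ coincides with the elliptic Riemannian ODE and hence reduces to Cases~1--3 of Theorem~\ref{normalf} with the signature swapped.
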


\begin{rem}
One can also consider the problem of isometrically embedding a Riemannian surface with metric $\mg$ as a spacelike surface in Minkowski
space where the inner product has signature $(2,1)$.
In that case, the relevant exterior differential system is hyperbolic when the
Gauss curvature $K$ of $\mg$ is strictly positive, and elliptic when $K<0$.  However, the set of Riemannian metrics for which this system
is Darboux integrable is exactly the same as in Theorem \ref{normalf}.
\end{rem}

\section{An Explicit Lorentzian Embedding}\label{Example-sec}

In \S \ref{ImmersionsToE3} and \S \ref{LorentzianSurfaces}, we classified those Riemannian and Lorentzian 2-metrics whose isometric embedding EDS
is Darboux integrable on the relevant product of frame bundles. In this section, we study one of these metrics in detail,
namely
\begin{equation}\label{exampleMetric}
\bsy{g}_0=u^2\big(dv^2 - du^2\big).
\end{equation}
For this metric, we show how to derive explicit formulas for its isometric embeddings into $\R^{1,2}$ by making use of its Darboux integrability. We also examine the corresponding geometric Cauchy problem, which asks for an isometrically embedded surface containing a prescribed curve and normal to a prescribed vector field along the curve. We present this as an example of the role played by Darboux integrability in the isometric embedding problem and which applies equally to any of the classified metrics whose embedding system is hyperbolic.

\subsection{The embedding EDS in local coordinates}

In this subsection, we derive a local coordinate expression for the differential system corresponding to the isometric embedding of the Lorentz signature metric
$
\bsy{g}_0
$
into $\R^{1,2}$ with its standard metric $\bsy{h} = dx_1^2 - dx_2^2 - dx_3^2$.
By choosing the specific coframing $\eta^1 = u\, dv$, $\eta^2 = u\, du$ for the metric $\bsy{g}_0$ on $M$, we can pull back the isometric embedding EDS from $\FM \times \FRm$ to $M \times \FRm$.  The coframing $(\eta^1, \eta^2)$ on $M$ satisfies the structure equations \eqref{SurfaceStr}, with $\eta^1_2 = u^{-1}\, dv$ and $K = -u^{-4}$.
Meanwhile, the frame bundle $\FRm$ has coframing $(\o^i, \o^i_j)$, $1\leq i, j\leq 3$, satisfying structure equations \eqref{R3streq}  with the connection forms $\o^i_j$ satisfying the symmetries \eqref{MinkowskiConnectionForms}.

As in \S \ref{LorentzianSurfaces}, the isometric embedding EDS is generated by the 1-forms \eqref{IsometricEDS} and the 2-forms \eqref{IsometricEDS2forms}, and the singular systems are given by
\begin{equation}\label{characteristicSystems}
\pV_\pm=\left\{\th_0,\ \th_1,\ \th_2, \ \th_3, \o^3_1\mp k\eta^2,\ \o^3_2\pm k\eta^1\right\},
\end{equation}
where, because $\bsy{g}_0$ has $K = -u^{-4}$, we have $k=u^{-2}$.

In order to carry out the method of \cite{AFV}, we must construct local coordinates on the ambient manifold  $M\times \FRm$ and express the EDS $\CI$ in terms of these coordinates.  Since $\FRm \simeq \R^{1,2} \times SO(1,2)$ and we have local coordinates $(u,v)$ on $M$ and $(x_1, x_2, x_3)$ on $\R^{1,2}$,  we simply need to construct local coordinates on the $SO(1,2)$ factor.  A rational parametrization may be constructed as follows. Define a basis
\[ \mathbf{b}_1 = \begin{bmatrix} 0 & -1 & 0 \\ -1 & 0 & 0 \\ 0 & 0 & 0 \end{bmatrix}, \qquad
\mathbf{b}_2 = \begin{bmatrix} 0 & 0 & -1 \\0 & 0 & 1 \\ -1 & -1 & 0 \end{bmatrix}, \qquad
\mathbf{b}_3 = \begin{bmatrix} 0 & 0 & -1 \\0 & 0 & -1 \\ -1 & 1 & 0 \end{bmatrix} \]
for the Lie algebra $\mathfrak{so}(1,2)$, and consider the matrix $g$ defined by
\begin{multline}\label{SO12-in-coords}
g(a_1, a_2, a_3)  = \exp(a_3 \mathbf{b}_3)\exp(a_2 \mathbf{b}_2) \exp(\ln(a_1) \mathbf{b}_1) \\[0.1in]
 =
\begin{bmatrix}
\displaystyle{\frac{(a_2 a_3 + 1)^2 + a_2^2 + a_1^2(a_3^2+1)}{2 a_1} } &
\displaystyle{\frac{(a_2 a_3 + 1)^2 + a_2^2 - a_1^2(a_3^2+1)}{2 a_1} } &
-a_2(a_3^2+1) - a_3 \\[0.1in]
\displaystyle{\frac{(a_2 a_3 + 1)^2 - a_2^2 + a_1^2(a_3^2-1)}{2 a_1} } &
\displaystyle{\frac{(a_2 a_3 + 1)^2 - a_2^2 - a_1^2(a_3^2-1)}{2 a_1} } &
-a_2(a_3^2-1) - a_3 \\[0.1in]
\displaystyle{\frac{-a_3(a_2^2 + a_1^2) - a_2}{a_1}} &
\displaystyle{\frac{-a_3(a_2^2 - a_1^2) - a_2}{a_1}} &
2 a_2 a_3 + 1
\end{bmatrix},
\end{multline}
where $a_1 >0$.
If $\bsy{x}=(x_1, x_2, x_3)^t\in\B R^{1,2}$, then the transformation $\bsy{x}\mapsto g\,\bsy{x}$ defined by matrix multiplication preserves the quadratic form $Q(\bsy{x})=x_1^2-x_2^2-x_3^2$.
Letting $H^3$ denote the halfspace $\{a\in\B R^3~|~a_1>0\}$, \eqref{SO12-in-coords} defines a map $g: H^3\hookrightarrow SO(1,2)^+$, where $SO(1,2)^+$ denotes the identity component of the Lorentz group
$SO(1,2)$.  (This consists of matrices $g \in O(1,2)$ that are proper $(\text{det}\,g=1)$ and orthochronous,
i.e., they preserve light-cone orientation.) The image of this mapping is an open dense subset of $SO(1,2)^+$.

Then we identify the frame bundle $\FRm$ with the matrix Lie group consisting of all matrices $G \in GL(4,\R)$ of the form
\begin{equation}
G= \begin{bmatrix}1 & 0 & 0 & 0\cr
                    x_1 & &\cr
										x_2 &&g\cr
										x_3&&\end{bmatrix}
\end{equation}
with $g$ as in \eqref{SO12-in-coords}.  The mapping $(x_i, a_j) \mapsto G$ then covers an open dense subset $\mF_0\subset \FRm$ on which
we will use the $x_i$ and $a_j$ as coordinates.  In terms of these coordinates, the canonical and connection 1-forms of $\FRm$ are
given by the components of the left-invariant Maurer-Cartan form $G^{-1}dG$:
\begin{equation}\label{MCformsE21}
\begin{aligned}
&\o^1=\displaystyle{\frac{(a_2 a_3 + 1)^2 + a_2^2 + a_1^2(a_3^2+1)}{2 a_1} }\, dx_1 - \displaystyle{\frac{(a_2 a_3 + 1)^2 - a_2^2 + a_1^2(a_3^2-1)}{2 a_1} }\, dx_2 \\
& \qquad + \displaystyle{\frac{a_3(a_2^2 + a_1^2) + a_2}{a_1}}\, dx_3, \\[0.1in]
& \o^2= - \displaystyle{\frac{(a_2 a_3 + 1)^2 + a_2^2 - a_1^2(a_3^2+1)}{2 a_1} }\, dx_1 + \displaystyle{\frac{(a_2 a_3 + 1)^2 - a_2^2 - a_1^2(a_3^2-1)}{2 a_1} } \, dx_2 \\
& \qquad + \displaystyle{\frac{-a_3(a_2^2 - a_1^2) - a_2}{a_1}}\, dx_3, \\[0.1in]
& \o^3= (a_2(a_3^2+1) + a_3)\, dx_1 - (a_2(a_3^2-1) + a_3)\, dx_2 + (2 a_2 a_3 + 1)\, dx_3, \\[0.1in]
&\o^1_2= -\displaystyle{\frac{da_1}{a_1} } + 2 a_2\, da_3, \\[0.1in]
& \o^3_1= -\displaystyle{\frac{da_2 + (a_1^2 - a_2^2)\,da_3}{a_1} }, \\[0.1in]
& \o^3_2= -\displaystyle{\frac{da_2 - (a_1^2 + a_2^2)\, da_3}{a_1} }.
\end{aligned}
\end{equation}

\subsection{Adapted frames for the Darboux pair}\label{AdaptedFrames}
We now focus on using the Darboux integrability of the metric (\ref{exampleMetric}) to construct representations of all local isometric embeddings
$\iota: (M,\bsy{g}_0)\hookrightarrow (\B R^{1,2}, \bsy{h})$. We do this by first constructing the {\it 5-adapted frames} and {\it Vessiot group} for the {\it Darboux pair} whose respective annihilators we denote by $\CH_\pm$. These objects,  described in detail in \cite{AFV}, are constructed below starting from the EDS $\CI$.The integration procedure established in \cite{AFV} is then used to construct the integral submanifolds of $\CI$.

We have $k=u^{-2}$ and $dk=k_1\eta^1+k_2\eta^2$, which implies that $k_1=0,\ k_2=-2u^{-4}$. By design, the singular systems (\ref{characteristicSystems}) have derived flags which terminate in rank 2 integrable sub-bundles given by
\begin{equation}\label{g0chars}
\begin{aligned}
\pV^{\infty}_+=&\left\{\o^3_1-k\eta^2+\frac{k_1}{2k^2}(\eta^1_2-\o^1_2),\ \o^3_2+k\eta^1+\frac{k_2}{2k^2}(\eta^1_2-\o^1_2)\right\} = \left\{\o^3_1 - \frac{du}{u}, \w^3_2 + \w^1_2 \right\}, \cr
\pV^{\infty}_-=&\left\{\o^3_1+k\eta^2-\frac{k_1}{2k^2}(\eta^1_2-\o^1_2),\ \o^3_2-k\eta^1-\frac{k_2}{2k^2}(\eta^1_2-\o^1_2)\right\} = \left\{\o^3_1 + \frac{du}{u}, \w^3_2 - \w^1_2 \right\}.
\end{aligned}
\end{equation}
We denote the space of first integrals of $\pV_\pm$ by $\inv\pV_\pm$ and find that
$$
\inv\pV_+=\left\{ \frac{a_3(a_1+a_2)+1}{a_1+a_2},\ \frac{\sqrt{u}(a_1+a_2)}{\sqrt{a_1}}\right\}, \
\inv\pV_-=\left\{ \frac{a_3(a_1-a_2)-1}{a_1-a_2},\ \frac{\sqrt{u}(a_1-a_2)}{\sqrt{a_1}}\right\},
$$
where we have assumed that $u>0$.  That is, any first integral of $\pV_\pm$ is a function of the elements of
$\inv\pV_\pm$.
For the sake of convenience we set
\begin{equation}\label{characteristicFirstIntegrals}
\begin{aligned}
&p= \frac{a_3(a_1-a_2)-1}{a_1-a_2}, \qquad p_0=\frac{\sqrt{u}(a_1-a_2)}{\sqrt{a_1}},\cr
&q= \frac{a_3(a_1+a_2)+1}{a_1+a_2}, \qquad q_0= \frac{\sqrt{u}(a_1+a_2)}{\sqrt{a_1}}.
\end{aligned}
\end{equation}

\begin{rem} One way to arrive at these integrals is to carefully examine the structure equations of the singular systems
given at the far right in \eqref{g0chars}.  Taking the `plus' system for example, one finds that
$$d(\w^1_2 + \w^3_2) = (\w^1_2 + \w^3_2) \wedge \w^3_1,$$
indicating that $\w^1_2 + \w^3_2$ is an integrable 1-form---i.e., it is locally, up to a nonzero multiple, the exact derivative of a function.
This function arises as follows. From \eqref{d-of-frame} we have
$$d(\ve_1 - \ve_3) = -(\ve_1 - \ve_3) \w^3_1 + \ve_2 (\w^1_2 + \w^3_2).$$
Recall that the function $\vn=\ve_1-\ve_3$ takes value in the cone $\CN \subset \R^{1,2}$ of nonzero null vectors.  The above equation
indicates that the projectivization of $\ve_1 - \ve_3$ is a first integral of the 1-form $\w^1_2 + \w^3_2$.  In other words,
if $\proj:\CN \to \RP^1$ is the projectivization map, then $d(\proj \circ \vn) \equiv 0$ modulo $\w^1_2 + \w^3_2$.
Hence, the pullback of any local coordinate on the projectivized null cone $\RP^1$ is a first integral of $\pV_+$.  Moreover, the above equation implies that
$$d(\ve_1 - \ve_3) \equiv  -(\ve_1 - \ve_3) \frac{du}{u} \quad \mod \ \pV^{\infty}_+,$$
and it follows that the null vector $u(\ve_1 -\ve_3)$ is a first integral of $\pV^{\infty}_+$.  In fact, subtracting the first and third
columns of the right-hand side of \eqref{SO12-in-pq-coords} and multiplying by $u$ gives
$$u(\ve_1 - \ve_3) = \left( \tfrac12 q_0^2 (q^2 + 1), \tfrac12 q_0^2 (q^2 -1), -q_0^2 q \right)^t.$$
(Here we have used the relation $u = -\tfrac{1}{2}p_0 q_0 (p-q)$, which is a straightforward consequence of \eqref{characteristicFirstIntegrals}.)
Because $\CN$ is two-dimensional, we obtain two independent first integrals this way.  The first integrals $p,p_0$ of $\pV^{\infty}_-$ arise in a similar fashion by computing $u(\ve_1 + \ve_3)$.
\end{rem}

In accordance with the procedure set down in \cite{AFV}, we pass to a coordinate system adapted to $\inv\pV_\pm$.
Let $N \subset M\times \mF_0$ be the open subset where $a_1 - a_2 \ne 0$ and $a_1 + a_2 \ne 0$,
and let
$$\phi:N \to \R^8$$
denote the  mapping defined by the coordinate transformation
$$\phi (u, v, a_j, x_i) = (p, p_0, q, q_0, v, x_i).$$
This is a diffeomorphism onto its image $N_1 \subset\R^8$.

With respect to this coordinate system, the local parametrization \eqref{SO12-in-coords} for the $SO(1,2)$ component of $\FRm \simeq \R^{1,2} \times SO(1,2)$ may be expressed as
\begin{equation}\label{SO12-in-pq-coords}
g\left(p, q, \frac{p_0}{q_0}\right)  =
\begin{bmatrix}
\displaystyle{-\frac{p_0^2(p^2+1) + q_0^2(q^2+1)}{2 p_0 q_0 (p-q)} } &
\displaystyle{\frac{pq+1}{p-q} } &
\displaystyle{-\frac{p_0^2(p^2+1) - q_0^2(q^2+1)}{2 p_0 q_0 (p-q)} } \\[0.1in]
\displaystyle{-\frac{p_0^2(p^2-1) + q_0^2(q^2-1)}{2 p_0 q_0 (p-q)} } &
\displaystyle{\frac{pq-1}{p-q} } &
\displaystyle{-\frac{p_0^2(p^2-1) - q_0^2(q^2-1)}{2 p_0 q_0 (p-q)} } \\[0.1in]
\displaystyle{\frac{p_0^2 p + q_0^2 q}{p_0 q_0 (p-q)}} &
\displaystyle{-\frac{p+q}{p-q}} &
\displaystyle{\frac{p_0^2 p - q_0^2 q}{p_0 q_0 (p-q)}}
\end{bmatrix}.
\end{equation}
From this expression, we see that the domain $N_1$ of this coordinate system must be contained in the region where $p \neq q$, $p_0 q_0 \neq 0$.

Now consider the characteristic distributions
$$
H_+=\ann \pV_+,\qquad H_-=\ann \pV_-,\qquad H =\ann \CI = H_+ \oplus H_- .
$$
Substituting \eqref{IsometricEDS} into \eqref{characteristicSystems} to get the $\th_a$ in terms of the $\w^j_i$, and then using (\ref{MCformsE21}),
shows that the pullbacks of the singular systems $\pV_\pm$ to $N_1$ via the diffeomorphism $\phi^{-1}$ may be written as
\begin{equation}\label{char-systems-0}
\begin{aligned}
\pV^1_+ = \Big{\{} & dq,\ dq_0,\ dv + \tfrac{1}{2} p_0^2\, dp,  \\
& dx_1 + \tfrac{1}{8} p_0^2(p_0^2(p^2+1) + q_0^2(q^2 - 2pq -1))\, dp - \tfrac{1}{4}p_0 q_0^2 (p-q)(pq+1)\, dp_0, \\
& dx_2 + \tfrac{1}{8} p_0^2(p_0^2(p^2-1) + q_0^2(q^2 - 2pq +1))\, dp - \tfrac{1}{4}p_0 q_0^2 (p-q)(pq-1)\, dp_0, \\
& dx_3 - \tfrac{1}{4}p p_0^2(p_0^2-q_0^2)\, dp + \tfrac{1}{4} p_0 q_0^2(p^2-q^2)\, dp_0 \Big{\}}, \\[0.05in]
\pV^1_- = \Big{\{} & dp,\ dp_0,\ dv - \tfrac{1}{2} q_0^2\, dq,  \\
& dx_1 - \tfrac{1}{8} q_0^2(p_0^2(p^2 - 2pq -1) + q_0^2(q^2+1))\, dq - \tfrac{1}{4}p_0^2 q_0 (p-q)(pq+1)\, dq_0, \\
& dx_2 - \tfrac{1}{8} q_0^2(p_0^2(p^2 - 2pq +1) + q_0^2(q^2-1))\, dq - \tfrac{1}{4}p_0^2 q_0 (p-q)(pq-1)\, dq_0, \\
& dx_3 - \tfrac{1}{4}q q_0^2(p_0^2-q_0^2)\, dq + \tfrac{1}{4} p_0^2 q_0(p^2-q^2)\, dq_0 \Big{\}}.
\end{aligned}
\end{equation}
Moreover, the pullback $\CI^1$ of $\CI$ to $N_1$ is generated by the Pfaffian system $\pV^1 = \pV^1_+ \cap \pV^1_-$ together with the 2-forms $dp \wedge dp_0$ and $dq \wedge dq_0$.

It follows that the push-forwards of the characteristic distributions $\CH_{\pm}$ to $N_1$ via $\phi$ are given by
\[ \CH_+^1=\{X_1,\ X_2\},\qquad \CH_-^1=\{Y_1, \ Y_2\}. \]
where
\begin{equation}\label{char-distributions-0}
\begin{aligned}
X_1 =  & \P {p_0} + \tfrac{1}{4}p_0 q_0^2 (p-q)(pq+1)\, \P {x_1}
+ \tfrac{1}{4}p_0 q_0^2 (p-q)(pq-1)\, \P {x_2} - \tfrac{1}{4} p_0 q_0^2(p^2-q^2)\, \P {x_3},
\\
X_2=& \P p - \tfrac{1}{2} p_0^2\, \P v - \tfrac{1}{8} p_0^2(p_0^2(p^2+1) + q_0^2(q^2 - 2pq -1))\, \P {x_1} \\
& \qquad - \tfrac{1}{8} p_0^2(p_0^2(p^2-1) + q_0^2(q^2 - 2pq +1))\, \P {x_2} + \tfrac{1}{4}p p_0^2(p_0^2-q_0^2)\, \P {x_3},\\[0.05in]
Y_1 =  &\P {q_0} + \tfrac{1}{4}p_0^2 q_0 (p-q)(pq+1)\, \P {x_1} + \tfrac{1}{4}p_0^2 q_0 (p-q)(pq-1)\, \P {x_2} - \tfrac{1}{4} p_0^2 q_0(p^2-q^2)\, \P {x_3}, \\
Y_2=& \P q + \tfrac{1}{2} q_0^2\, \P v + \tfrac{1}{8} q_0^2(p_0^2(p^2 - 2pq -1) + q_0^2(q^2+1))\, \P {x_1} \\
& \qquad + \tfrac{1}{8} q_0^2(p_0^2(p^2 - 2pq +1) + q_0^2(q^2-1))\, \P {x_2} + \tfrac{1}{4}q q_0^2(p_0^2-q_0^2)\, \P {x_3}.
\end{aligned}
\end{equation}
It is well known (and straightforward to check) that
$[\CH_+, \CH_-]\equiv 0\mod\CH.$
Moreover, the basis vectors above satisfy this congruence exactly; i.e.,
we have
$$
[X_i, Y_j]=0, \ \ \text{for all}\ \ i,j\in\{1,2\}.
$$
This implies, for instance, that the distribution $\CB=\{X_1,\, Y_1\}$ on $N_1$ is a rank 2 integrable distribution,
and it is straightforward to check that its first integrals are spanned by the functions\footnote{We can equally choose $\CB=\{X_i, Y_j\}$
for any $i,j\in\{1,2\}$ since they are all Frobenius integrable and each choice has the desired property of giving bases for $\CH_\pm$ in which one of the basis elements is locally expressible as a coordinate vector field. This provides a means by which to ultimately achieve 5-adapted coframes.}
$$\operatorname{inv}\CB=\{ p,q,v,y_1,y_2,y_3\},$$
where
\begin{equation}\label{invariants-6D-B}
\begin{aligned}
{y}_1 &= x_1 - \tfrac{1}{8} p_0^2 q_0^2 (p-q)(1+pq),\\
{y}_2 &= x_2 + \tfrac{1}{8} p_0^2 q_0^2 (p-q)(1-pq),\\
{y}_3 &= x_3 + \tfrac{1}{8} p_0^2 q_0^2 (p^2 - q^2).
\end{aligned}
\end{equation}
Continuing to follow \cite{AFV}, we achieve the final adapted frame by making a local change of variables
$
\psi:N_1\to N_2
$,
defined by
\begin{equation}\label{define-psi}
\psi(p, p_0, q, q_0, v, x_i) = (p, p_0, q, q_0, v, {y}_i),
\end{equation}
where ${y}_1, {y}_2, {y}_3$ are as in \eqref{invariants-6D-B}.
Straightforward calculations show that the pullbacks of the singular systems $\pV^1_\pm$ via the diffeomorphism $\psi^{-1}$ may be written as
\begin{equation}\label{char-systems-1}
\begin{aligned}
\pV^2_+ = \Big{\{} & dq,\ dq_0,\ dv + \tfrac{1}{2} p_0^2\, dp, \
dy_1 + \tfrac{1}{8} p_0^4(p^2 + 1)\, dp, \
dy_2 + \tfrac{1}{8} p_0^4(p^2 - 1)\, dp, \
dy_3 - \tfrac{1}{4}p p_0^4\, dp \Big{\}}, \\[0.05in]
\pV^2_- = \Big{\{} & dp,\ dp_0,\ dv - \tfrac{1}{2} q_0^2\, dq,  \
dy_1 - \tfrac{1}{8} q_0^4(q^2+1)\, dq, \
dy_2 - \tfrac{1}{8} q_0^4(q^2-1)\, dq , \
dy_3 + \tfrac{1}{4} q q_0^4\, dq \Big{\}},
\end{aligned}
\end{equation}
and hence that the pullback $\CI^2$ of $\CI^1$ by $\psi^{-1}$ is generated by the Pfaffian system
\begin{equation}
\begin{aligned}
\pV^2_+ \cap \pV^2_- = \Big{\{}
& dv + \tfrac{1}{2} p_0^2\, dp - \tfrac{1}{2} q_0^2\, dq, \
dy_1 + \tfrac{1}{8} p_0^4(p^2 + 1)\, dp - \tfrac{1}{8} q_0^4(q^2-1)\, dq, \\
& dy_2 + \tfrac{1}{8} p_0^4(p^2 - 1)\, dp - \tfrac{1}{8} q_0^4(q^2-1)\, dq ,  \
dy_3 - \tfrac{1}{4}p p_0^4\, dp + \tfrac{1}{4} q q_0^4\, dq
\Big{\}},
\end{aligned}
\end{equation}
together with the 2-forms $dp \wedge dp_0$ and $dq \wedge dq_0$.

It follows that the push-forwards of the characteristic distributions $\CH^1_{\pm}$ by $\psi$ are given by
\begin{equation}\label{char-distributions-1}
\begin{aligned}
&\CH^2_+ = \left\{\P {p_0},\, P = \P p - \tfrac{1}{2} p_0^2\, \P v - \tfrac{1}{8} p_0^4(p^2 + 1)\,  \P {y_1} - \tfrac{1}{8} p_0^4(p^2 - 1)\, \P {y_2} + \tfrac{1}{4}p p_0^4\, \P {y_3} \right\}\cr
&\CH^2_- = \left\{\P {q_0},\, Q = \P q + \tfrac{1}{2} q_0^2\, \P v + \tfrac{1}{8} q_0^4(q^2+1)\, \P {y_1} + \tfrac{1}{8} q_0^4(q^2-1)\, \P {y_2} - \tfrac{1}{4} q q_0^4\, \P {y_3} \right\}.
\end{aligned}
\end{equation}
(Note that the vector fields $\P {p_0}, \P{p}, \P {q_0}, \P{q}$ in \eqref{char-distributions-1} are defined relative to the coordinate system $(p, p_0, q, q_0, v, y_i)$ and hence are not the same as those in \eqref{char-distributions-0}.)

In what follows, we will find integral manifolds of $\CI$ by constructing integral manifolds of
$\CI^2$ in $N_2$ and mapping them to $M \times \FRm$ via $\psi^{-1}\circ\phi^{-1}$.
For this purpose, we note that $\pV^2 = \ann \CH^2$, where $\CH^2$ denotes $\CH^2_+ \oplus \CH^2_-$.

\subsection{The Vessiot algebra and superposition} The purpose of the frame adaptations leading to $\CH^2$ is that they enable us to construct the superposition formula from knowledge of the Vessiot algebra associated to any Darboux integrable system, such as the embedding EDS $\CI$. The Vessiot algebra $\mathfrak{vess}(\pV_+,\pV_-)$ of any Darboux pair $(\pV_+,\pV_-)$ permits one to construct a formula
(the superposition formula) which intertwines the integral manifolds of each singular system $\V_\pm$ or, equivalently $\CH_\pm$ to give an integral submanifold of $\CI$; see \S \ref{Darboux-int-sec}.

Indeed, let
$$
\mathfrak{g}_+=\{\P {y_1},\ \P {y_2},\ \P {y_3},\ \P v\}.
$$
Then it is easy to verify that
$$
\bsy{f}^5_+=\mathfrak{g}_+\oplus\{\P {p_0},\ P\}\oplus\{\P {q_0}, \ Q\}
$$
is one of the 5-adapted frames as defined in \cite{AFV} and that $\mathfrak{g}_+$ is the ``left" Vessiot algebra.
Since in this case the Vessiot algebra is abelian, the ``right" Vessiot algebra is equal to the left and the other 5-adapted frame, $\bsy{f}^5_-$, is the same as
$\bsy{f}^5_+$.  In general, the left and right Vessiot algebras of a Darboux pair coincide with the left- and right-invariant vector fields on a Lie group - the Vessiot group of the Darboux pair. The superposition formula corresponds to multiplication on the Vessiot group. In this case, the Vessiot group being abelian, the superposition formula is essentially identical with linear superposition. Thus the procedure for constructing integral submanifolds of $\CH^2$ is, roughly speaking, to separately construct integral submanifolds of $\CH_+^2$ and $\CH_-^2$ and then add the result.  For further explanation and examples, we refer the reader to \cite{ClellandVassiliou} and
\cite{AFV}.

More specifically, in this case the manifolds $\wh{M}_1, \wh{M}_2$ of Theorem \ref{mainAFVthm} are integral manifolds of the systems
\[ (\pV_+^2)^{\infty} = \{ dq, dq_0\}, \qquad (\pV_-^2)^{\infty} = \{ dp, dp_0\}, \]
respectively. They may each be identified with $\R^6$, with local coordinates $(p$, $p_0$, $v^+$,
$y^+_1$, $y^+_2$, $y^+_3)$ on $\wh{M}_1$ and $(q, q_0, v^-, y^-_1, y^-_2, y^-_3)$ on $\wh{M}_2$.  The Pfaffian systems $\wh{\bsy{\theta}}_1, \wh{\bsy{\theta}}_2$ on $\wh{M}_1, \wh{M}_2$ are the pullbacks to these integral manifolds of the singular systems $\pV_+^2$ and $\pV_-^2$, respectively.  The superposition formula combines integral curves $\sigma_+:(a,b) \to \wh{M}_1$ and $\sigma_-:(a,b) \to \wh{M}_2$ of these systems,
$\wh{\bsy{\theta}}_1, \wh{\bsy{\theta}}_2$, to form an integral surface
$\iota_2 = (\sigma_+ * \sigma_-):(a,b) \times (a,b) \to N_2$ of $\CI^2$.  Explicitly, if we write
\begin{align*}
\sigma_+(t) & = (p(t), p_0(t), v^+(t), y^+_1(t), y^+_2(t), y^+_3(t)), \\
\sigma_-(t) & = (q(t), q_0(t), v^-(t), y^-_1(t), y^-_2(t), y^-_3(t)),
\end{align*}
then
\begin{multline}\label{sp-formula}
\iota_2 (t_1, t_2) = \sigma_+(t_1) * \sigma_-(t_2) =
\Big{(} p(t_1), \ p_0(t_1), \ q(t_2), \ q_0(t_2), \\
v^+(t_1) + v^-(t_2),\ \ y^+_1(t_1) + y^-_1(t_2),\ \ y^+_2(t_1) + y^-_2(t_2),\ \ y^+_3(t_1) +
y^-_3(t_2)\Big{)}.
\end{multline}

\subsection{Integral submanifolds of $\CH^2$}\label{construct-int-mflds-sec}

It can be shown that the systems $\wh{\bsy{\theta}}_1, \wh{\bsy{\theta}}_2$ cannot be integrated in finite terms of arbitrary functions and their derivatives alone. On the other hand, it is easy to express the solutions via quadrature. Now we require integral manifolds of $\CI$ to be such that $\eta^1\we\eta^2=du\we dv$ is non-zero. It is easy to see that there are one-dimensional integral manifolds of $\wh{\bsy{\theta}}_1$ upon which $dp\neq 0$. Similarly, there are one-dimensional integral manifolds of $\wh{\bsy{\theta}}_2$ upon which $dq\neq 0$. One can find such integral manifolds
of $\wh{\bsy{\theta}}_1$  by solving the ODE system
\begin{equation}\label{V2plus-ODEs}
\begin{alignedat}{2}
(y^+_1)'(p) & = -\tfrac{1}{8}(p^2+1) f(p)^4, & \qquad
(y^+_2)'(p) & = -\tfrac{1}{8}(p^2-1) f(p)^4, \\
(y^+_3)'(p) & = \tfrac{1}{4}p f(p)^4, & \qquad
(v^+)'(p) & = -\tfrac{1}{2} f(p)^2,
\end{alignedat}
\end{equation}
while integral curves of $\wh{\bsy{\theta}}_2$ are given by solving the ODE system
\begin{equation}\label{V2minus-ODEs}
\begin{alignedat}{2}
(y^-_1)'(q) & = \tfrac{1}{8}(q^2+1) g(q)^4, & \qquad
(y^-_2)'(q) & = \tfrac{1}{8}(q^2-1) g(q)^4, \\
(y^-_3)'(q) & = -\tfrac{1}{4}q g(q)^4, & \qquad
(v^-)'(q) & = \tfrac{1}{2} g(q)^2.
\end{alignedat}
\end{equation}
If we take
\[ f(p) = (8 F'''(p))^{1/4}, \qquad g(q) = (8 G'''(p))^{1/4} \]
for some (arbitrary) smooth functions $F, G$ with $F''', G''' > 0$, then a straightforward integration by parts yields characteristic curves $\sigma_+:(a,b) \to \wh{M}_1$ and $\sigma_-:(a,b) \to \wh{M}_2$ of the form
\begin{equation}
\begin{aligned}
\s_+(p)=\Big{(} & p,\ p_0 = (8 F'''(p))^{1/4}, \ v^+ = -\int \sqrt{2 F'''(p)}\, dp, \\
& y^+_1 = -(p^2+1) F''(p) + 2p F'(p) - 2 F(p), \\
& y^+_2 = -(p^2-1) F''(p) + 2p F'(p) - 2 F(p), \\
& y^+_3 = 2p F''(p) - 2 F'(p) \Big{)}, \\[0.05in]
\s_-(q)=\Big{(} & q,\ q_0 = (8 G'''(q))^{1/4}, \  v^- = \int \sqrt{2 G'''(q)}\, dq, \\
& y^-_1 = (q^2+1) G''(q) - 2q G'(q) + 2 G(q), \\
& y^-_2 = (q^2-1) G''(q) - 2q G'(q) + 2 G(q), \\
& y^-_3 = -2q G''(q) + 2 G'(q) \Big{)}.
\end{aligned}
\end{equation}
Then from the superposition formula, the general integral manifold of $(N_2, \CI^2)$ may be expressed as
\begin{equation}\label{gen-int-mfld-N2}
\begin{aligned}
\iota_2(p,q) & = \s_+(p) * \s_-(q)  = \\
\Big{(} & p,\ p_0 = (8 F'''(p))^{1/4}, q,\ \ q_0 = (8 G'''(q))^{1/4}, \\
& v = -\int \sqrt{2 F'''(p)}\, dp + \int \sqrt{2 G'''(q)}\, dq, \\
& y_1 = -(p^2+1) F''(p) + 2p F'(p) - 2 F(p) + (q^2+1) G''(q) - 2q G'(q) + 2 G(q), \\
& y_2 = -(p^2-1) F''(p) + 2p F'(p) - 2 F(p) + (q^2-1) G''(q) - 2q G'(q) + 2 G(q), \\
& y_3 = 2p F''(p) - 2 F'(p) -2q G''(q) + 2 G'(q) \Big{)} .
\end{aligned}
\end{equation}

Composing the expression \eqref{gen-int-mfld-N2} with the diffeomorphism $\phi^{-1} \circ \psi^{-1}:N_2 \to M \times \FRm$ followed by the projection $\pi:M \times \FRm \to \R^{1,2}$
gives a formula for the general isometric immersion of $(M, \bsy{g}_0)$ into $\R^{1,2}$, but parametrized with respect to the variables $(p,q)$ rather than the original coordinates $(u,v)$ on $M$.  This yields the following parametrization for the general isometric immersion of $(M, \bsy{g}_0)$ into $\R^{1,2}$:
\begin{equation}\label{gen-int-mfld-M}
\begin{aligned}
x_1 & = (p-q)(pq+1)\sqrt{F'''(p) G'''(q)} - (p^2+1)F''(p) + 2p F'(p) - 2 F(p) \\
& \qquad + (q^2+1) G''(q) - 2q G'(q) + 2 G(q), \\
x_2 & = (p-q)(pq-1)\sqrt{F'''(p) G'''(q)} - (p^2-1) F''(p) + 2p F'(p) - 2 F(p) \\
& \qquad + (q^2-1) G''(q) - 2q G'(q) + 2 G(q), \\
x_3 & = (q^2 - p^2)\sqrt{F'''(p) G'''(q)} + 2p F''(p) - 2 F'(p) -2q G''(q) + 2 G'(q).
\end{aligned}
\end{equation}
Of particular significance is the fact that the parametrization \eqref{gen-int-mfld-M} is expressed in terms of two arbitrary functions and their derivatives, {\em without} any integration required.

We also obtain the following expressions for the coordinates $(u,v)$ in terms of $(p,q)$:
\begin{equation}\label{pq-to-uv}
 u = -(p-q) \left(F'''(p) G'''(q)\right)^{1/4}, \qquad
v =  -\int \sqrt{2 F'''(p)}\, dp + \int \sqrt{2 G'''(q)}\, dq .
\end{equation}
For most choices of functions $F,G$ the map $(p,q)\mapsto (u,v)$ has local inverses and hence we have achieved our aim of finding local isometric immersions of $(M,\bsy{g}_0)$ into $\R^{1,2}$ with its standard metric. In most cases, however (depending on the particular functions $F(p), G(q)$), we will not be able to {\it  explicitly} invert in order to obtain an explicit parametrization for the immersion in terms of $(u,v)$.

\begin{xmpl}
For an explicit example, suppose that the functions $f(p)$ and $g(q)$ are constants, say $p_0=\e_1, q_0=\e_2$. This corresponds to choosing $F(p) = \frac{1}{48}\e_1^4 p^3, \ G(q) = \frac{1}{48}\e_2^4 q^3$.  In this case, the map
\[ \iota_0 = \phi^{-1} \circ \psi^{-1} \circ \iota_2: \R^2 \to M \times \FRm \]
defining the corresponding integral manifold of $\CI$ is given by
\begin{align*}
\iota_0(p,q) = \Big{(} & u = \tfrac{1}{2} \e_1 \e_2 (q-p), \ v =  \tfrac{1}{2} (\e_2^2 q - \e_1^2 p), \\
& x_1 = \tfrac{1}{8} \left( -\tfrac{1}{3}\e_1^4 p^3 +\tfrac{1}{3}\e_2^4 q^3 + \e_1^2 \e_2^2 pq(p-q) - (\e_1^2 - \e_2^2)(p+q) \right), \\
& x_2 = \tfrac{1}{8} \left( -\tfrac{1}{3}\e_1^4 p^3 +\tfrac{1}{3}\e_2^4 q^3 + \e_1^2 \e_2^2 pq(p-q) + (\e_1^2 - \e_2^2)(p+q) \right), \\
& x_3 = \tfrac{1}{8} (\e_1^2 - \e_2^2)(\e_1^2 p^2 + \e_2^2 q^2), \\
& a_1 = \frac{(\e_1 + \e_2)^2}{2 \e_1 \e_2 (p-q)}, \
a_2 = -\frac{(\e_1^2 - \e_2^2 )}{2 \e_1 \e_2 (p-q)}, \
a_3 = \frac{(\e_1 p + \e_2 q)}{(\e_1 + \e_2)} \Big{)}.
\end{align*}
In this case, the equations \eqref{pq-to-uv} can be solved for $p$ and $q$ explicitly, and this yields the following parametrization for the immersed surface $\iota = \pi \circ \iota_0$ in terms of the original coordinates $(u,v)$:
\begin{equation}\label{special-int-mfld-M}
\begin{aligned}
x_1 & = \frac{(\e_1^2+\e_2^2)(v^3+3u^2v) - 2\e_1\e_2(u^3+3uv^2)}{3(\e_1^2-\e_2^2)^2} + \frac{1}{4}(\e_1^2+\e_2^2)v - \frac{1}{2}\e_1\e_2u, \\
x_2 & = \frac{(\e_1^2+\e_2^2)(v^3+3u^2v) - 2\e_1\e_2(u^3+3uv^2)}{3(\e_1^2-\e_2^2)^2} - \frac{1}{4}(\e_1^2+\e_2^2)v + \frac{1}{2}\e_1\e_2u, \\
x_3 & = \frac{(\e_1^2+\e_2^2)(u^2+v^2) - 4\e_1\e_2uv}{2(\e_1^2-\e_2^2)}.
\end{aligned}
\end{equation}

As an aide to visualization, introduce a change of variables on $(M,\bsy{g}_0)$, by setting $\bar{u}=v-u,\ \bar{v}=v+u$. Then $\ell^*\bsy{\bar{g}}_0=\bsy{g}_0$, where $\bsy{\bar{g}}_0=\ds \left(\frac{\bar{v}-\bar{u}}{2}\right)^2d\bar{u}\,d\bar{v}$. Below, in Figure \ref{specialImmersions}, we exhibit  graphs of the isometric  immersion of $\bsy{\bar{g}}$ in $(\mathbb{R}^{1,2}, \bsy{h})$ together with some of the coordinates lines for the particular parameter values $\epsilon_1=1, \ \epsilon_2=4$.

\begin{figure}[ht]
\begin{center}
\includegraphics[width=2.5in]{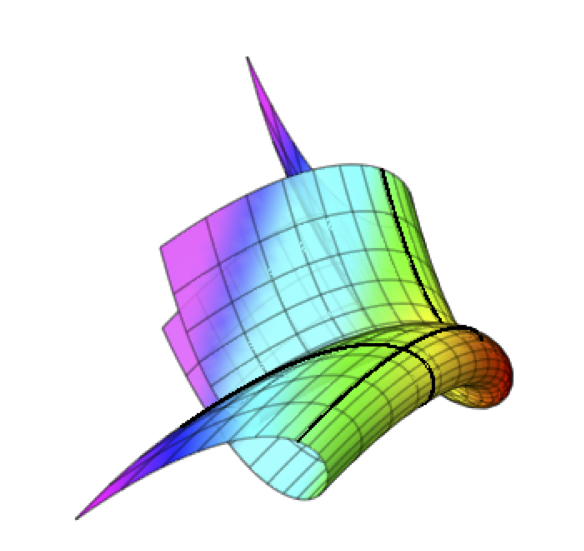}
\includegraphics[width=1.5in]{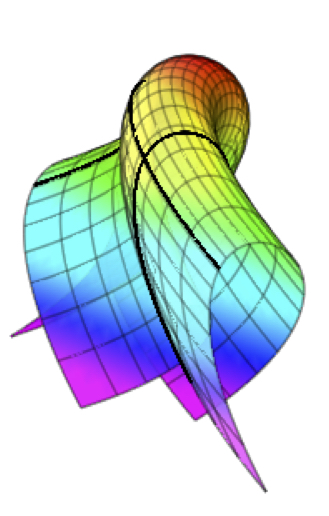}
\includegraphics[width=2.5in]{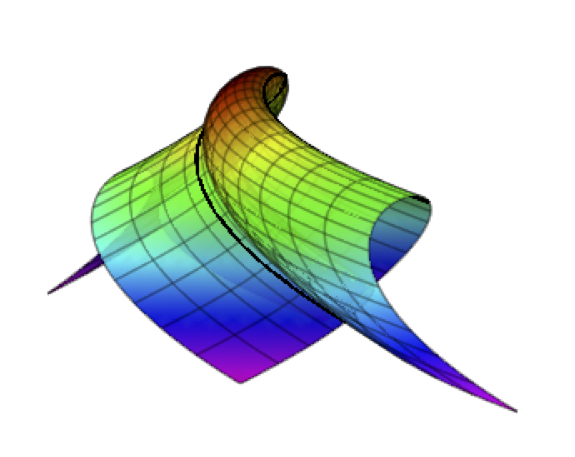}
\end{center}
\caption{\small Views of immersion $\iota$ of $\bsy{\bar{g}}_0$ into $(\mathbb{R}^{1,2}, \bsy{h})$, $\epsilon_1=1, \epsilon_2=4$.}
\label{specialImmersions}
\end{figure}

\end{xmpl}

\subsection{The geometric Cauchy problem for $(M,\bsy{g}_0)\hookrightarrow (\B R^{1,2}, \bsy{h})$}

In this subsection, we briefly explore the role that Darboux integrability plays in resolving the local geometric Cauchy problem for Darboux integrable metrics like $\bsy{g}_0$. Classically, for the local geometric Cauchy problem for $\bsy{g}_0$, one prescribes a smoothly immersed curve $\g: (a,b) \to \B R^{1,2}$ and a (necessarily spacelike) unit vector field $\bar{\ve}_3:(a,b) \to \B R^{1,2}$ along $\g$ orthogonal to the tangent vector field $\g'$.  This initial data determines a unique local isometric immersion of $\bsy{g}_0$ into $\B R^{1,2}$ which contains an open subset of the image of $\g$ and whose normal vector field along this subset is given by $\bar{\ve}_3$; the Cauchy problem seeks to construct this immersion from the given initial data.

We will show in this subsection how the Darboux integrability of $\bsy{g}_0$ leads to a solution of this problem via ODE methods.  Specifically,
for a given initial curve $\gamma$ and normal vector field $\bar{\ve}_3$,
the problem reduces to a system of two first-order ordinary differential equations for two unknown functions---or equivalently, a single second-order scalar ODE.  A solution to this ODE may be thought of as determining a preferred parametrization for the initial data, and after reparametrization, the explicit solution of the geometric Cauchy problem is reducible to quadrature.  For more details regarding these methods and an intrinsic formulation we refer to \cite{AF2}.


Let $\pV^2 = \pV^2_+ \cap \pV^2_-$ (where $\pV^2_\pm$ are as in \eqref{char-systems-1}) denote the Pfaffian system on
$N_2:=\psi \circ \phi(N)$, which corresponds via pullback to the degree one piece of $\CI$, and which is dual to the embedding distribution $\CH^2=\CH^2_+\oplus\CH^2_-$, which has been adapted to the Darboux invariants $\inv \pV_+$ and  $\inv \pV_-$.
As described in \cite{AF2} (and due to the superposition formula \eqref{sp-formula}), the Cauchy problem for this system may be solved as follows: Given a non-characteristic integral curve $\sigma:(a,b) \to N_2$ of $\pV^2$, there exists a decomposition $\sigma(t) = \sigma_+(t) * \sigma_-(t)$, where $\sigma_+:(a,b) \to \wh{M}_1$ and $\sigma_-:(a,b) \to \wh{M}_2$ are integral curves of the singular systems $\pV^2_\pm$.  Moreover, because the Vessiot group of this system is abelian, this decomposition may be constructed by quadrature, and it is unique up to the choice of constants of integration.  Then the corresponding 2-dimensional integral manifold $\iota_2: (a,b) \times (a,b) \to N_2$ is given by
\[ \iota_2(t_1, t_2) = \sigma_+(t_1) * \sigma_-(t_2). \]

Since the geometric Cauchy problem prescribes initial data for the coordinates $(x_1$, $x_2$, $x_3)$ (rather than $(y_1, y_2, y_3))$, the first step is to construct a non-characteristic integral curve $\sigma_0:(a,b) \to N_1$ of $\pV^1$ corresponding to this initial data.  Then the curve $\sigma = \psi \circ \sigma_0:(a,b) \to N_2$ will be the desired integral curve of $\pV^2$, from which we will construct the superposition formula.

From \eqref{char-systems-0}, we see that the Pfaffian system $\pV^1 = \pV^1_+ \cap \pV^1_-$ may be written as
\begin{equation}\label{Pfaffian-system-N1}
\begin{aligned}
\pV^1 = \Big{\{} & dv + \tfrac{1}{2} p_0^2\, dp - \tfrac{1}{2} q_0^2\, dq, \\
& dx_1 + \tfrac{1}{8} p_0^2(p_0^2(p^2+1) + q_0^2(q^2 - 2pq -1))\, dp - \tfrac{1}{4}p_0 q_0^2 (p-q)(pq+1)\, dp_0 \\
& \qquad - \tfrac{1}{8} q_0^2(p_0^2(p^2 - 2pq -1) + q_0^2(q^2+1))\, dq - \tfrac{1}{4}p_0^2 q_0 (p-q)(pq+1)\, dq_0, \\
& dx_2 + \tfrac{1}{8} p_0^2(p_0^2(p^2-1) + q_0^2(q^2 - 2pq +1))\, dp - \tfrac{1}{4}p_0 q_0^2 (p-q)(pq-1)\, dp_0 \\
& \qquad - \tfrac{1}{8} q_0^2(p_0^2(p^2 - 2pq +1) + q_0^2(q^2-1))\, dq - \tfrac{1}{4}p_0^2 q_0 (p-q)(pq-1)\, dq_0, \\
& dx_3 - \tfrac{1}{4}p p_0^2(p_0^2-q_0^2)\, dp + \tfrac{1}{4} p_0 q_0^2(p^2-q^2)\, dp_0 \\
& \qquad - \tfrac{1}{4}q q_0^2(p_0^2-q_0^2)\, dq + \tfrac{1}{4} p_0^2 q_0(p^2-q^2)\, dq_0 \Big{\}}.
\end{aligned}
\end{equation}
The general integral curve of this system may be constructed by choosing arbitrary functions $p, p_0, q, q_0: (a,b) \to \R$, substituting these functions into the Pfaffian system \eqref{Pfaffian-system-N1}, and then integrating the resulting expressions to obtain the remaining functions $v, x_1, x_2, x_3: (a,b) \to \R$; indeed, this was essentially the approach that we used to construct the general solution in \S \ref{construct-int-mflds-sec}.

For the geometric Cauchy problem, we must approach the construction of integral curves to the system \eqref{Pfaffian-system-N1} from a slightly different perspective.  Now we are given an initial curve $\g:(a,b) \to \R^{1,2}$, with parametrization
\begin{equation}\label{gamma-param}
\gamma(t) = (\bar{x}_1(t), \bar{x}_2(t), \bar{x}_3(t)),
\end{equation}
and a spacelike unit normal vector field $\bar{\ve}_3:(a,b) \to \R^{1,2}$ along $\gamma$. In order to lift the initial data $(\g, \bar{\ve}_3)$ to a non-characteristic integral curve $\sigma_0:(a,b) \to N_1$ of $\pV^1$, we must show how to obtain functions $p, p_0, q, q_0: (a,b) \to \R$ for which the functions $x_1, x_2, x_3: (a,b) \to \R$ and $\ve_3:(a,b) \to \R^{1,2}$ on the corresponding integral curve of $\pV^1$ agree with the given functions $\bar{x}_1, \bar{x}_2, \bar{x}_3$ and the vector field $\bar{\ve}_3$.

First, observe from \eqref{SO12-in-pq-coords} that
\begin{equation}\label{e3-in-pq-coords}
 \ve_3 = \left(\displaystyle{-\frac{p_0^2(p^2+1) - q_0^2(q^2+1)}{2 p_0 q_0 (p-q)} }, \ \displaystyle{-\frac{p_0^2(p^2-1) - q_0^2(q^2-1)}{2 p_0 q_0 (p-q)} }, \ \displaystyle{\frac{p_0^2 p - q_0^2 q}{p_0 q_0 (p-q)}} \right)^t.
\end{equation}
Replacing $\ve_3$ by the prescribed vector field $\bar{\ve}_3(t)$ leads to two {\em algebraic} constraints that must be satisfied by the four functions $p, p_0, q, q_0: (a,b) \to \R$.  Geometrically, these constraints may be interpreted as follows: One comes from the orthogonality requirement
\begin{equation}\label{orthog-requirement}
 \gamma'(t) \cdot \bar{\ve}_3(t) =  0,
\end{equation}
and once this is taken into account, specifying $\bar{\ve}_3$ is equivalent to specifying the ratio
\begin{equation}\label{specify-lambda}
 \lambda(t) = \frac{p_0(t)^2(p(t)^2+1) - q_0(t)^2(q(t)^2+1)}{p_0(t)^2(p(t)^2-1) - q_0(t)^2(q(t)^2-1)}
\end{equation}
between the first two components of $\bar{\ve}_3(t)$.  (Without loss of generality---for example, by applying an appropriate isometry of $\R^{1,2}$ and shrinking the interval $(a,b)$ if necessary---we may assume that $\bar{x}'_3(t) \neq 0$ for all $t \in (a,b)$, so that $\bar{\ve}_3(t) \neq (0,\ 0,\ 1)^t$.)
Rearranging, we see that \eqref{specify-lambda} is equivalent to the relation
\begin{equation}\label{4-fcts-relation}
p_0(t)^2 \left( (\lambda(t)-1) p(t)^2  - (\lambda(t) + 1)\right) = q_0(t)^2 \left( (\lambda(t)-1) q(t)^2  - (\lambda(t) + 1)\right),
\end{equation}
and, taking this relation into account, the orthogonality condition \eqref{orthog-requirement} becomes
\begin{equation}\label{2-fcts-relation}
(\lambda(t)-1) \,\bar{x}'_3(t)\, p(t)\, q(t) + (\lambda(t)\, \bar{x}'_1(t) - \bar{x}'_2(t))(p(t) + q(t)) + (\lambda(t)+1)\,\bar{x}'_3(t) = 0.
\end{equation}

The construction proceeds as follows: The algebraic solutions $(p, p_0, q, q_0):(a,b) \to \R^4$ of the relations \eqref{4-fcts-relation} and \eqref{2-fcts-relation} may be parametrized in terms of two arbitrary functions $r, s:(a,b) \to \R$ in a fairly straightforward way.  These expressions (along with the conditions $x_i = \bar{x}_i(t)$) may then be substituted into the last three 1-forms in \eqref{Pfaffian-system-N1} to obtain a system of three first-order ODEs for the two unknown functions $r, s$.  This system is redundant, but consistent, and hence may be written as a system of two first-order ODEs for $r$ and $s$, which in turn may be written as a single second-order ODE for one of the two functions, say $r$.  Any solution $r(t)$ of this ODE leads to functions $p, p_0, q, q_0:(a,b) \to \R^4$ which satisfy the last three 1-forms in \eqref{Pfaffian-system-N1}, and then the first 1-form in \eqref{Pfaffian-system-N1} may be used to construct the $v$-coordinate function by quadrature.  Once this has been accomplished, the corresponding isometric embedding may then be constructed as in \S \ref{construct-int-mflds-sec}.
Moreover, in a neighborhood of any point where $r'(t) \neq 0$, we may reparametrize the initial data with respect to $r$, after which the remainder of the process may be carried out via quadrature.  If we set $r(t) = t$, then the ODE for $r$ may be interpreted as a single ODE that the initial data must satisfy in order to be ``appropriately parametrized."  If the initial data satisfies this ODE, then the solution to the corresponding geometric Cauchy problem may be constructed entirely by quadrature.

Unfortunately, in general this construction is not practical to carry out explicitly.  Parametrizing the algebraic solutions to \eqref{4-fcts-relation} and \eqref{2-fcts-relation} is straightforward enough, but substituting the resulting expressions into \eqref{Pfaffian-system-N1} leads to a system that is computationally impractical to write down explicitly, even with the help of a computer algebra system such as {\sc Maple}.  However, for certain special choices of initial data, the algebra becomes tractable and we can construct explicit solutions.  The simplest case comes from choosing $\lambda(t) = 1$, in which case the relations \eqref{4-fcts-relation} and \eqref{2-fcts-relation} simplify considerably, to  $p_0(t) = q_0(t)$ and
\begin{equation}\label{2-fcts-relation-special}
 (\bar{x}'_1(t) - \bar{x}'_2(t))(p(t) + q(t)) + 2\,\bar{x}'_3(t) = 0,
\end{equation}
respectively.
Geometrically, this choice corresponds to requiring the normal vector $\bar{\ve}_3(t)$ to be contained in the lightlike plane in $T_{\gamma(t)}\R^{1,2}$ defined by $z_1 = z_2$, where $(x_i, z_i)$ are the canonical local coordinates on $T\R^{1,2}$.  This can be seen directly from the fact that when $p_0 = q_0$, the expression \eqref{e3-in-pq-coords} reduces to
\[ \ve_3 = \left(-\tfrac{1}{2}(p+q), \ -\tfrac{1}{2}(p+q), \ 1\right)^t. \]
Since we must have $\gamma'(t) \cdot \bar{\ve}_3(t) = 0$ and we have already imposed the requirement that $\bar{x}'_3(t) \neq 0$, this choice requires that our initial curve $\gamma$ satisfy the additional condition $\bar{x}'_1(t) \neq \bar{x}'_2(t)$. By applying an appropriate isometry of $\R^{1,2}$ and (if necessary) shrinking the interval $(a,b)$, we can ensure that these conditions hold for any initial curve $\gamma:(a,b) \to \R^{1,2}$.
Hence we can parametrize the algebraic solution space of \eqref{2-fcts-relation-special} as
\begin{equation}\label{pq-param}
 p(t) = \frac{\bar{x}'_3(t)}{(\bar{x}'_2(t) - \bar{x}'_1(t))} + s(t), \qquad q(t) = \frac{\bar{x}'_3(t)}{(\bar{x}'_2(t) - \bar{x}'_1(t))} - s(t)
\end{equation}
where $s:(a,b) \to \R$ is an arbitrary function.

Substituting $q_0(t) = p_0(t) = r(t)$ and the expressions \eqref{pq-param} into the system \eqref{Pfaffian-system-N1} yields the ODE system
\begin{equation}\label{ODE-sys-pp0qq0}
\begin{gathered}
r'(t)  = \frac{\bar{x}'_1(t) - \bar{x}'_2(t)}{2 r(t)^3 s(t)}, \\[0.1in]
s'(t)  = \frac{1}{2(\bar{x}'_1(t) - \bar{x}'_2(t)) r(t)^4}
\left(\frac{\bar{x}'_2(t)^2 + \bar{x}'_3(t)^2 - \bar{x}'_1(t)^2}{s(t)^2} - (\bar{x}'_1(t) + \bar{x}'_2(t))^2 \right), \\[0.1in]
v'(t) = -\frac{1}{2(\bar{x}'_1(t) - \bar{x}'_2(t)) r(t)^2}
\left(\frac{\bar{x}'_2(t)^2 + \bar{x}'_3(t)^2 - \bar{x}'_1(t)^2}{s(t)^2} - (\bar{x}'_1(t) + \bar{x}'_2(t))^2 \right).
\end{gathered}
\end{equation}
From the first equation in \eqref{ODE-sys-pp0qq0}, we can set
\begin{equation} \label{define-s}
s(t) = \frac{\bar{x}'_1(t) - \bar{x}'_2(t)}{2 r(t)^3 r'(t)},
\end{equation}
and then the second equation in \eqref{ODE-sys-pp0qq0} becomes a second-order scalar ODE for the function $r(t)$:
\begin{multline}\label{ode-for-p0}
({\bar{x}}'_1 - {\bar{x}}'_2)^4 r r'' - 4(({\bar{x}}'_1)^2 - ({\bar{x}}'_2)^2 - ({\bar{x}}'_3)^2) r^6 (r')^4
\\ + 2 ({\bar{x}}'_1 - {\bar{x}}'_2)^4 (r')^2 - ({\bar{x}}'_1 - {\bar{x}}'_2)^3 (\bar{x}''_1 - \bar{x}''_2) r r' = 0.
\end{multline}
Since the initial curve $\g$ satisfies the condition ${\bar{x}}'_1(t) - {\bar{x}}'_2(t) \neq 0$, the existence and uniqueness theorem for ODEs guarantees a local solution $r(t)$ to equation \eqref{ode-for-p0}.
Then taking $q_0(t) = p_0(t) = r(t)$, defining $s(t), p(t),q(t)$ as in \eqref{define-s} and \eqref{pq-param}, and then integrating the third equation in \eqref{ODE-sys-pp0qq0} to obtain $v(t)$ yields the desired integral curve $\sigma_0:(a,b) \to N_1$ for $\pV^1$.
Composing with $\psi$ gives the desired integral curve
$\sigma = \psi \circ \sigma_0:(a,b) \to N_2$  for $\pV^2$.

Finally, the decomposition $\sigma(t) = \sigma_+(t) * \sigma_-(t)$ is constructed as follows: Substitute the functions $p(t), p_0(t)$ defined by $\sigma$ into the singular system $\pV_+^2$ on $\wh{M}_1$.  The desired integral curve $\sigma_+:(a,b) \to \wh{M}_1$ of this system is obtained by integrating the resulting ODE system
\begin{equation}\label{V2plus-ODEs-again}
\begin{alignedat}{2}
(y^+_1)'(t) & = -\tfrac{1}{8}(p(t)^2+1) p_0(t)^4\, p'(t), & \qquad
(y^+_2)'(t) & = -\tfrac{1}{8}(p(t)^2-1) p_0(t)^4\, p'(t), \\
(y^+_3)'(t) & = \tfrac{1}{4}p(t) p_0(t)^4\, p'(t), & \qquad
(v^+)'(t) & = -\tfrac{1}{2} p_0(t)^2\, p'(t).
\end{alignedat}
\end{equation}
Similarly, the desired integral curve $\sigma_-:(a,b) \to \wh{M}_2$ of $\pV^2_-$ is obtained by integrating the ODE system
\begin{equation}\label{V2minus-ODEs-again}
\begin{alignedat}{2}
(y^-_1)'(t) & = \tfrac{1}{8}(q(t)^2+1) q_0(t)^4 \, q'(t), & \qquad
(y^-_2)'(t) & = \tfrac{1}{8}(q(t)^2-1) q_0(t)^4 \, q'(t), \\
(y^-_3)'(t) & = -\tfrac{1}{4}q(t) q_0(t)^4 \, q'(t), & \qquad
(v^-)'(t) & = \tfrac{1}{2} q_0(t)^2\, q'(t).
\end{alignedat}
\end{equation}
Initial conditions for both curves at some point $t_0 \in (a,b)$ should be chosen so that
\begin{equation}\label{V2initial_conditions}
y_i^+(t_0) + y_i^-(t_0) = y_i(t_0), \qquad v^+(t_0) + v^-(t_0) = v(t_0),
\end{equation}
where $y_i(t_0),\, v(t_0)$ are the values specified by $\sigma(t_0)$.

As mentioned earlier, if the solution to the ODE \eqref{ode-for-p0} satisfies $r'(t) \neq 0$, then locally we may reparametrize the initial curve $\gamma$ with respect to the variable $r$.  (Of course, this is rarely possible in practice, as it requires both solving the ODE explicitly and finding the inverse function of the solution.)  The components of the resulting curve (with $r(t) = t$) must then satisfy the relation
\begin{equation}\label{constraint-eqn}
-4\left( (\bar{x}'_1)^2 -(\bar{x}'_2)^2 - (\bar{x}'_3)^2 \right) t^6 + 2 (\bar{x}'_1 - \bar{x}'_2)^4  - (\bar{x}'_1 - \bar{x}'_2)^3 (\bar{x}''_1 - \bar{x}''_2) t = 0.
\end{equation}
Conversely, if the components of the given initial curve $\gamma$ satisfy \eqref{constraint-eqn}, then we may choose $r(t) = t$ and proceed as above, with the entire process requiring only quadrature to construct the solution.

In summary, we have proved the following theorem.

\begin{thm}\label{CauchyVessiotNF}
Let $\g = (\bar{x}_1, \bar{x}_2, \bar{x}_3): (a,b) \to \R^{1,2}$ be an immersed curve with $\bar{x}'_1(t) - \bar{x}'_2(t) \neq 0$ and $\bar{x}'_3(t) \neq 0$ for all $t \in (a,b)$, and let $t_0 \in (a,b)$.  Then:
\begin{enumerate}
\item There exists an interval $(\bar{a}, \bar{b}) \subset (a,b)$ containing $t_0$ and functions $p,p_0,q,q_0, v:(\bar{a}, \bar{b}) \to \R$, with $p_0 = q_0$, such that the curve $\s_0:(\bar{a}, \bar{b}) \to N_1$ defined by
\[ \sigma_0(t) = (p(t), q(t), p_0(t), q_0(t), \bar{x}_1(t), \bar{x}_2(t),  \bar{x}_3(t),  v(t)) \]
is a non-characteristic integral curve of the Pfaffian system $\pV^1$ on $N_1 = \phi(M \times\FRm)$, and such that for each $t \in(\bar{a}, \bar{b})$, the vector $\ve_3(t)$ determined by \eqref{e3-in-pq-coords} is contained in the lightlike plane in $T_{\gamma(t)}\R^{1,2}$ defined by $z_1 = z_2$, where $(x_i, z_i)$ are the canonical local coordinates on $T\R^{1,2}$.
\item These functions then determine a decomposition (unique up to constants of integration) $\sigma(t) = \sigma_+(t) * \sigma_-(t)$ of the curve $\sigma = \psi \circ \sigma_0:(\bar{a},\bar{b}) \to N_2$, where $\sigma_+:(\bar{a}, \bar{b}) \to \wh{M}_1$ and $\sigma_-:(\bar{a}, \bar{b}) \to \wh{M}_2$ are integral curves of the singular systems $\pV^1_\pm$, respectively, and hence a unique 2-dimensional integral manifold $\iota_2:(\bar{a}, \bar{b}) \times (\bar{a}, \bar{b}) \to N_2$ of $\pV^2$ given by $\iota_2(t_1, t_2) = \sigma_+(t_1) * \sigma_-(t_2)$.
\item The composition $\pi \circ \psi^{-1} \circ \iota_2: (\bar{a}, \bar{b}) \times (\bar{a}, \bar{b}) \to \R^{1,2}$ defines a local isometric embedding of an open subset of $(M, \bsy{g}_0)$ into $\R^{1,2}$ whose image contains the curve $\gamma((\bar{a}, \bar{b}))$.
\item If the component functions of $\gamma$ satisfy the relation \eqref{constraint-eqn}, then all these constructions may be performed using only quadratures.
\end{enumerate}

\end{thm}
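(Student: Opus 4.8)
The plan is to verify, in order, the four assertions by carrying out explicitly the construction sketched in the discussion preceding the statement, specialized throughout to the case $\lambda(t) \equiv 1$ in \eqref{specify-lambda}. The guiding principle is that solving the geometric Cauchy problem for $\CI$ amounts to two moves: first, lifting the initial data $(\gamma, \bar{\ve}_3)$ to a non-characteristic integral curve $\sigma_0$ of $\pV^1$; and second, invoking the superposition formula \eqref{sp-formula} furnished by Theorem \ref{mainAFVthm} to fill in the unique integral surface through $\sigma_0$. Because the underlying Vessiot algebra has already been shown to be abelian, the second move is essentially linear superposition and costs only quadrature.

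For part (1) I would first observe that the prescription $\lambda \equiv 1$ forces $p_0 = q_0$ through \eqref{4-fcts-relation} and reduces the orthogonality constraint \eqref{2-fcts-relation} to the linear relation \eqref{2-fcts-relation-special}; the hypotheses $\bar{x}'_1 - \bar{x}'_2 \neq 0$ and $\bar{x}'_3 \neq 0$ then permit the explicit one-parameter parametrization \eqref{pq-param} of the algebraic solution space in terms of a free function $s$. Substituting \eqref{pq-param} together with $p_0 = q_0 = r$ and the prescribed values $x_i = \bar{x}_i$ into the 1-forms of $\pV^1$ in \eqref{Pfaffian-system-N1} produces the system \eqref{ODE-sys-pp0qq0}; using its first equation to eliminate $s$ via \eqref{define-s} converts the second equation into the single second-order scalar ODE \eqref{ode-for-p0} for $r$. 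A local solution exists by the existence and uniqueness theorem for ODEs, since the leading coefficient $(\bar{x}'_1 - \bar{x}'_2)^4$ is nonvanishing, and back-substitution recovers $p, p_0, q, q_0$ and (by quadrature from the first 1-form of $\pV^1$) the function $v$. The claim that $\ve_3$ lies in the plane $z_1 = z_2$ follows by setting $p_0 = q_0$ in \eqref{e3-in-pq-coords}.

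The main obstacle is the consistency of the overdetermined system obtained by substituting the two-function parametrization into the three 1-forms $dx_1, dx_2, dx_3$ of \eqref{Pfaffian-system-N1}: with $x_i = \bar{x}_i$ prescribed these yield three first-order equations linear in the two derivatives $r', s'$, so one must confirm they are compatible. The key observation is that the orthogonality requirement \eqref{orthog-requirement}, which in the present case is precisely the relation \eqref{2-fcts-relation-special} already built into \eqref{pq-param}, is exactly what renders one of the three equations dependent on the other two. Geometrically this is the statement that along an integral curve of $\CI$ the form $\theta_0 = \omega^3$ vanishes, so $\gamma'$ has no component along the normal $\ve_3$; once this is enforced, the remaining tangential data determine $r'$ and $s'$ consistently. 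This is the one genuinely computational point, and it is cleanly organized by recognizing the combination $(\bar{x}'_1)^2 - (\bar{x}'_2)^2 - (\bar{x}'_3)^2$ appearing in \eqref{ode-for-p0} and \eqref{constraint-eqn} as the Minkowski speed of $\gamma$.

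Finally, parts (2)--(4) follow formally from the machinery already in place. Because the Vessiot group is abelian, Theorem \ref{mainAFVthm} and the discussion of \eqref{sp-formula} yield the decomposition $\sigma = \sigma_+ * \sigma_-$ by quadrature, integrating \eqref{V2plus-ODEs-again} and \eqref{V2minus-ODEs-again} with the matching conditions \eqref{V2initial_conditions}, unique up to the constants of integration; the surface $\iota_2(t_1,t_2) = \sigma_+(t_1) * \sigma_-(t_2)$ is then the unique integral manifold of $\pV^2$ containing $\sigma$, giving (2). Composing with $\pi \circ \psi^{-1}$ and invoking the isometric-immersion correspondence established in \S\ref{ImmersionsToE3} (in its Lorentzian form) produces the embedding whose image contains $\gamma$, giving (3). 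For (4) I would note that the only non-quadrature step is solving \eqref{ode-for-p0}; taking the parametrization $r(t) = t$ removes it, and the requirement that this choice be self-consistent is exactly the relation \eqref{constraint-eqn}, so under that hypothesis every remaining operation is a quadrature.
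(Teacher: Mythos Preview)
Your proposal is correct and follows essentially the same construction as the paper, which assembles the proof from the discussion in \S5.6 culminating in ``In summary, we have proved the following theorem.'' Your treatment of the consistency of the overdetermined system for $r',s'$---explaining it geometrically via the vanishing of $\theta_0=\omega^3$---is slightly more explicit than the paper's one-line remark that the system is ``redundant, but consistent,'' but otherwise the argument is identical step for step.
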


A similar theorem could, in principle, be stated for initial data with an arbitrary normal vector field $\bar{\ve}_3:(a,b) \to \R^{1,2}$, although the appropriate analogs of the nondegeneracy condition $\bar{x}'_1(t) - \bar{x}'_2(t) \neq 0$ and the relation \eqref{constraint-eqn} are almost certainly impractical to determine explicitly.

We will conclude by illustrating the construction of the isometric embedding promised by Theorem \ref{CauchyVessiotNF} for a simple initial curve.

\begin{xmpl}\label{Example2}
Suppose we start with the curve
\[ \displaystyle \g(t)= (\bar{x}_1(t), \ \bar{x}_2(t),\ \bar{x}_3(t))=
\left(\frac{3t + 4t^3}{8}, \
\frac{3t - 4t^3}{8},\ \frac{3t^2}{4} \right) \]
for $t$ in some interval containing the initial point $t_0 = 1$.
We have
\begin{gather*}
 \bar{x}'_1(t) = \frac{3 + 12\,t^2}{8}, \qquad \bar{x}'_2(t) =  \frac{3 - 12\,t^2}{8}, \qquad \bar{x}'_3(t) = \frac{3t}{2}, \\
 \bar{x}''_1(t) = 3t, \qquad \bar{x}''_2(t) = -3t, \qquad \bar{x}''_3(t) = \frac{3}{2}.
\end{gather*}
This curve satisfies the constraint \eqref{constraint-eqn} as well as the conditions $\bar{x}'_1(t)-\bar{x}'_2(t)$, $\bar{x}'_3(t)\neq 0$, and so (taking $p_0(t) = q_0(t) = t$) from \eqref{pq-param}, \eqref{ODE-sys-pp0qq0}, and \eqref{define-s} we obtain
\[ p(t) = \frac{1}{t}, \qquad q(t) = -\frac{2}{t}, \qquad v(t) = \frac{3t}{2}. \]
Composing with $\psi$, we obtain the integral curve $\sigma:(a,b) \to N_2$ given by
\begin{align*}
\s(t) = \Big{(} & p(t) = \frac{1}{t}, \ p_0(t) = t, \ q(t) = -\frac{2}{t}, q_0(t) = t, \ v(t) = \frac{3t}{2}, \\
& y_1(t) = \frac{9t + t^3}{8}, \ y_2(t) = \frac{9t - t^3}{8}, \ y_3(t) = \frac{3t^2}{8} \Big{)}.
\end{align*}
Substituting these expressions for $p(t), p_0(t)$ into the ODE system \eqref{V2plus-ODEs-again} and integrating from $t=1$ to $t=t_1$, using the initial conditions
$$
y_i^+(1) = \tfrac{1}{2}y_i(1), \qquad v^+(1) = \tfrac{1}{2}v(1),
$$
yields
\begin{alignat*}{2}
y_1^+(t_1) &= \frac{3t_1 + t_1^3 + 11}{24} ,& \qquad
y_2^+(t_1) &= \frac{3t_1 - t_1^3 + 10}{24}, \\
y^+_3(t_1) &= \frac{5 - 2t_1^2}{16} ,& \qquad
v^+(t_1) & = \frac{2t_1+1}{4}.
\end{alignat*}
Similarly, substituting these expressions for $q(t), q_0(t)$ into the ODE system \eqref{V2minus-ODEs-again} and integrating from $t=1$ to $t=t_2$, using the initial conditions
$$y_i^-(1) = \tfrac{1}{2}y_i(1),\qquad v^-(1) = \tfrac{1}{2}v(1),$$ yields
\begin{alignat*}{2}
y_1^-(t_2) &= \frac{24 t_2 + 2 t_2^3 -11}{24} , &\qquad
y_2^-(t_2) &= \frac{12 t_2 - t_2^3 - 5}{12}, \\
y^-_3(t_2) &= \frac{8 t_2^2 - 5}{16}  ,&
\qquad
v^-(t_2) & = \frac{4t_2 - 1}{4} .
\end{alignat*}

Thus the superposition formula \eqref{sp-formula} yields
\begin{align*}
\iota_2(t_1, t_2) = \Big{(} & p = \frac{1}{t_1}, \ p_0 = t_1, \ q = -\frac{2}{t_2},\ q_0 = t_2, \ v = \frac{t_1 + 2 t_2}{2} , \\
& y_1 = \frac{3 t_1 + t_1^3 + 24 t_2 + 2 t_2^3}{24}, \
 y_2 = \frac{3 t_1 - t_1^3 + 24 t_2 - 2 t_2^3}{24}  , \
 y_3 = \frac{4 t_2^2 - t_1^2}{8}
\Big{)}.
\end{align*}
Finally, composing with $\psi^{-1}$ gives the following map $\iota_1 = \psi^{-1} \circ \iota_2:(a,b) \times (a,b) \to N_1$:
\begin{align*}
\iota_1(t_1, t_2) = \Big{(} & p = \frac{1}{t_1}, \ p_0 = t_1, \ q = -\frac{2}{t_2},\ q_0 = t_2, \ v = \frac{t_1 + 2 t_2}{2} , \\
& x_1 = \frac{18 t_2 - 9 t_1 + t_1^3 + 6 t_1^2 t_2 + 3 t_1 t_2^2 + 2 t_2^3}{24}, \\
& x_2 = \frac{18 t_2 - 9 t_1 - t_1^3 - 6 t_1^2 t_2 - 3 t_1 t_2^2 - 2 t_2^3}{24}, \\
& x_3 = \frac{3 t_1^2 + 3 t_2^2}{8}
 \Big{)}.
\end{align*}
The functions $x_i(t_1, t_2)$ are the components of the isometric embedding. The relationship between the variables $(u,v)$ on $(M,\bsy{g}_0)$ and $(t_1, t_2)$ can be deduced from the transformation $\phi^{-1}$ defined by equations (\ref{characteristicFirstIntegrals}), to be
$$
u=\frac{1}{2}p_0(t_1)\,q_0(t_2)\,(q(t_2)-p(t_1)),
$$
together with function $v$ above.
In this case the relationship turns out to be
$$
u= -\frac{2 t_1 + t_2}{2},\qquad
v= \frac{t_1 + 2 t_2}{2},
$$
which can be locally inverted to obtain the following expression for the explicit immersion in terms of $(u,v)$:
\begin{align*}
x_1 & = \frac{108 u + 135 v + 16 u^3 + 60 u^2 v + 48 u v ^2 + 20 v^3}{108}, \\
x_2 & = \frac{108 u + 135 v - 16 u^3 - 60 u^2 v - 48 u v ^2 - 20 v^3}{108}, \\
x_3 & = \frac{5 u^2 + 8 u v + 5 v^2}{6}.
\end{align*}
Some graphs of this surface, along with the initial curve $\gamma$, are shown in Figure \ref{Example2-fig}.  Note that the metric itself degenerates along the coordinate curve $u=0$, and this curve is precisely where the surface fails to be an immersion.

\begin{figure}[ht]
\begin{center}
\includegraphics[width=2in]{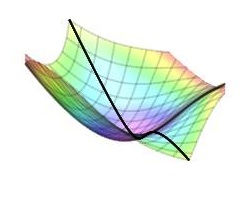}
\includegraphics[width=2in]{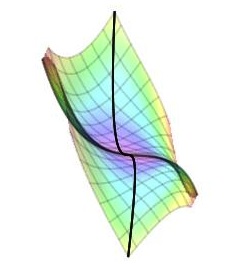}
\includegraphics[width=2in]{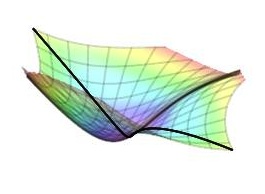}
\end{center}
\caption{\small Views of the surface of Example \ref{Example2}}
\label{Example2-fig}
\end{figure}

\end{xmpl}

\end{document}